\theoremstyle{plain}
\newtheorem{theorem}{Theorem}[section]
\newtheorem{corollary}[theorem]{Corollary}
\newtheorem{lemma}[theorem]{Lemma}
\newtheorem{proposition}[theorem]{Proposition}
\theoremstyle{definition}
\newtheorem{definition}[theorem]{Definition}
\newtheorem{assumption}[theorem]{Assumption}
\theoremstyle{remark}
\newtheorem{remark}{Remark}
\newcommand{\N}{\mathbb{N}}
\newcommand{\R}{\mathbb{R}}
\newcommand{\Z}{\mathbb{Z}}
\newcommand{\Co}{{\operatorname{Co}}}
\newcommand\e{\varepsilon}
\newcommand\divv{\operatorname{div}}
\def\Xint#1{\mathchoice
   {\XXint\displaystyle\textstyle{#1}}%
   {\XXint\textstyle\scriptstyle{#1}}%
   {\XXint\scriptstyle\scriptscriptstyle{#1}}%
   {\XXint\scriptscriptstyle\scriptscriptstyle{#1}}%
   \!\int}
\def\XXint#1#2#3{{\setbox0=\hbox{$#1{#2#3}{\int}$}
     \vcenter{\hbox{$#2#3$}}\kern-.5\wd0}}
\def\fint{\Xint-}
\newcommand{\ol}{\overline}
\newcommand\dist{\operatorname{dist}}
\newcommand\sym{\operatorname{sym}}
\newcommand{\super}[1]{^{(#1)}}
\newcommand{\SO}[1]{\operatorname{SO}(#1)}
\newcommand\id{\operatorname{id}}
\newcommand\Id{\operatorname{Id}}
\newcommand{\wto}{\rightharpoonup}
\newcommand\ho{{\operatorname{hom}}}
\newcommand\per{{\operatorname{per}}}
\newcommand\loc{{\operatorname{loc}}}
\newcommand{\step}[1]{\medskip\noindent\textbf{Step #1. }}
\newcommand{\substep}[1]{\medskip\noindent\textit{Substep #1. }}
 \title{Quantitative homogenization in nonlinear elasticity for small loads}
 \author[1]{Stefan Neukamm\thanks{stefan.neukamm@tu-dresden.de}}
\author[1]{Mathias Sch\"affner\thanks{mathias.schaeffner@tu-dresden.de}}
\affil[1]{Department of Mathematics, Technische Universit\"at Dresden}
\begin{document}

\maketitle

\begin{abstract}
We study quantitative periodic homogenization of integral functionals in the context of non-linear elasticity. Under suitable assumptions on the energy densities (in particular frame indifference; minimality, non-degeneracy and smoothness at the identity; $p\geq d$-growth from below; and regularity of the microstructure), we show that in a neighborhood of the set of rotations, the multi-cell homogenization formula of non-convex homogenization reduces to a single-cell formula. The latter can be expressed with help of correctors. We prove that the homogenized integrand admits a quadratic Taylor expansion in an open neighborhood of the rotations -- a result that can be interpreted as the fact that homogenization and linearization commute close to the rotations. Moreover, for small applied loads, we provide an estimate on the homogenization error in terms of a quantitative two-scale expansion.
\medskip

\noindent
{\bf Keywords:} non-linear elasticity, quantitative homogenization, two-scale expansion, commutability of homogenization and linearization
\end{abstract}

\tableofcontents

\section{Introduction}

\subsection{Informal summary of results}

In this contribution we study quantitative homogenization of non-linearly elastic, periodic composites, modeled  by elastic energy functionals of the form
\begin{equation}\label{def:ene}
 \int_A W(\tfrac{x}\e,\nabla u(x))\,dx,\qquad u\in W^{1,p}(A,\R^d).
\end{equation}
Here, $0<\e\ll1$ stands for the period of the composite, $A\subset\R^d$ ($d\geq2$) is a bounded domain, and $W(y,F)$ denotes the stored elastic energy function which is non-convex in $F$ and which we suppose to be $Y:=[0,1)^d$ periodic in $y\in\R^d$. For the precise assumptions on $W$ see Section~\ref{S:1.2} below. In short we suppose frame indifference; minimality, non-degeneracy and smoothness at the identity; $p\geq d$-growth from below; and regularity of $W$ in the $y$-variable. We are interested in the \textit{homogenization} limit $\e\downarrow 0$. In this limit the period of the composite becomes infinitesimally small compared to the size of the macroscopic domain $A$. In the seminal works of Braides~\cite{Braides85} and M\"uller~\cite{Mueller87} it is shown that the \textit{homogenized} integral functional associated with \eqref{def:ene} is given by $\int_A W_\ho(\nabla u_0(x))\,dx$ where $W_\ho$ is defined via a \textit{multi-cell homogenization formula}
\begin{equation*}
 W_\ho(F)=\inf_{k\in\N}W_\ho\super{k}(F)\quad\mbox{where}\quad W_\ho\super{k}(F):=\inf_{\phi\in W_\per^{1,p}(kY)}\fint_{kY}W(y,F+\nabla \phi)\,dy.
\end{equation*}
The results of Braides~\cite{Braides85} and M\"uller~\cite{Mueller87} are phrased in the language of $\Gamma$-convergence of the associated energy functionals and require $W$ to satisfy standard $p$-growth (with $1<p<\infty$), see Remark~\ref{R:no_growth}. The derived homogenized energy functional $\int_A W_{\ho}(\nabla u)$ is of limited use in practice, since the evaluation of $W_{\hom}$ invokes a non-convex minimization problem on an infinite domain. The latter is also a source of difficulties regarding analytic properties of $W_{\hom}$: E.g. if $W$ satisfies standard growth-conditions, $W_{\ho}$ turns out to be quasiconvex; yet, it is in general not clear weather $W_{\ho}$ is more regular than continuous (even in the case of a two phase composite of smooth constituents).

The situation crucially simplifies when $F\mapsto W(y,F)$ is convex (e.g. for linear elasticity). In that case the multi-cell formula reduces to a single-cell formula, i.e.~$W_\ho=W_\ho^{(1)}$, see \cite{Marcellini78,Mueller87}, and in addition $W_{\hom}(F)$ can be expressed with help of a corrector, i.e. 
\begin{equation*}
  W_\ho(F)=\int_{Y}W(y,F+\nabla\phi(F))\,dy,
\end{equation*}
where $\phi(\cdot, F)\in W^{1,p}_\per(Y)$ (the \textit{corrector}) is defined as the minimizer of the minimization problem in the definition of $W_\ho\super 1(F)$. The corrector $\phi(F)$ captures the fluctuations in the strain induced by the material's heterogeneity. In general, one cannot expect that a similar simplification is valid in the non-convex case: The example of \cite{Mueller87} shows that the inequality $W_\ho\leq W_\ho^{(1)}$ (and even $W_\ho\leq W_\ho\super k$ for any finite  $k\in\N$) can be \textit{strict} if convexity of $W(y,\cdot)$ is dropped (or replaced by the weaker assumption of poly-convexity, cf.~\cite{Ball77}). Mechanically, this is related to buckling of the composites microstructure and the formation of shear bands, see \cite{Mueller93}. More recently, it is shown in \cite{BG11}, that even the inequality $W_\ho\leq \mathcal QW_\ho^{(1)}$ can be strict, where $\mathcal QW_\ho^{(1)}$ denotes the quasi-convex envelope of $W_\ho^{(1)}$. 
\medskip

Hence, it is necessary to evaluate the multi-cell formula $W_\ho$ in general. In contrast, our first result shows that (despite non-convexity) the multi-cell formula reduces to a single-cell formula for small (but finite) strains. More precisely, we show that there exists an open neighborhood $U$ of the set of rotations $\SO d$, such that for every $F\in U$ there exists a \textit{corrector} $\phi(F)\in W_\per^{1,p}(Y)$, unique up to a constant, with the property
\begin{equation*}
 W_\ho(F)=W_\ho^{(1)}(F)=\int_Y W(y,F+\nabla \phi(y,F))\,dy,
\end{equation*}
cf.~Theorem~\ref{T:1cell} and Proposition~\ref{P:layer}. As we outline next, the validity of the single-cell formula and the existence of correctors in a neighborhood of the rotations are crucial to establish further qualitative properties of $W_\ho$ (close to $\SO d$), and for deriving estimates on the homogenization error in regimes of small strains.

We begin with a discussion of the qualitative properties of $W_\ho$. In this context, we investigate the commutability issue between homogenization and linearization for nonlinear periodic composites in the spirit of Geymonat, M\"uller and Triantafyllidis \cite{Mueller93}. In \cite{Mueller93}, the authors provide, among other things, for non-convex $W$ a Taylor expansion of $W_{\hom}$ at matrices $F_0$ under the strong implicit hypothesis that (i) $W_\ho=W_\ho^{(1)}$ in a neighborhood of $F_0$ and (ii) existence and Lipschitz regularity of corresponding correctors. The question weather these properties can be deduced from ``rigorous mathematical results'' is left open, see the discussion in \cite[Section 5.2]{Mueller93}. In the present paper, we prove that both assumptions are justified for deformations close to $\SO d$ and rigorously justify the validity of the expansion in an open neighborhood of $\SO d$.  Our result can be interpreted as the \textit{commutability of homogenization and linearization} in a neighborhood of $\SO d$, see Remark~\ref{rem:commutability}. This extends earlier results by M\"uller and the first author, see \cite{MN11}, where the commutability property at the identity, i.e. for $F_0=\Id$, has been established under the mere assumption that $F\mapsto W(y,F)$ has a single non-degenerate minima at $\SO d$. For an extension of this result to the stochastic case, see \cite{GN11}, and to the multi-well case, see \cite{JS14}. Let us point out that a key issue in \cite{Mueller93} is to show that the homogenized energy density might loose strong rank-one convexity. Our result shows that this does not happen close to $\SO d$, see Theorem~\ref{T:1cell} (d). For more recent results concerning loss or conservation of strong rank-one convexity in linear elasticity we refer to \cite{BF15,FG16}. 
\medskip

A second consequence of the validity of the single-cell formula are estimates on the homogenization error. Consider the following variational problem
\begin{equation}\label{intro:ene:eps}
 \mbox{minimize}\quad \mathcal I_\e(u):=\int_A W(\tfrac{x}\e,\nabla u)-f\cdot u\,dx\qquad\mbox{subject to}\quad u-g\in W_0^{1,p}(A).
\end{equation}
The homogenization results of Braides and M\"uller \cite{Braides85, Mueller87} imply that under suitable growth conditions (almost) minimizers of \eqref{intro:ene:eps} converge to minimizers of  
\begin{equation*}
 \mbox{minimize}\quad \mathcal I_\ho(u):=\int_A W_{\hom}(\nabla u)-f\cdot u\,dx\qquad\mbox{subject to}\quad u-g\in W_0^{1,p}(A).
\end{equation*}
This result is purely qualitative and does not give any convergence rate for the minimizers. Of course, in general a rate cannot be expected, and moreover, minimizers of $\mathcal I_\e$ might not be unique or even do not exist. Assuming that the data is small in the sense that $\|f\|_{L^r(A)}+\|g-\id\|_{W^{2,r}(A)}$ for some $r>d$, we prove an error estimate in the following form of a quantitative two-scale expansion:

\begin{equation*}
 \|u_\e-u_0\|_{L^2(A)}+\|u_\e-(u_0+\e\phi(\tfrac{\cdot}\e,\nabla u))\|_{H^1(A)}\lesssim \sqrt{\e}+\left(\mathcal I_\e(u_\e)-\inf_{g+W_0^{1,p}(A)}\mathcal I_\e\right)^\frac12, 
\end{equation*}
see Theorem~\ref{T:NC:intro}, \ref{T:NC}, and Proposition~\ref{P:layer} below. To the best of our knowledge, this is the first quantitative estimate of the homogenization error in a non-convex vectorial situation. As a side remark note that our assumptions on the energy densities $W$ are not covered by the usual $p$-growth conditions (and also not by the more general convex growth conditions considered in \cite{AM11,DG16}). Hence, the known $\Gamma$-convergence result on non-convex integral functionals do not apply to our situation, and thus even the qualitative convergence result of almost minimizers of $\mathcal I_\e$ in $L^2(A)$ -- a statement that directly follows from our estimate -- seems not to be covered in the literature.

\subsection{Assumptions and main results}\label{S:1.2}
In this section we state the assumptions on $W$ and present our main results. We first introduce a class of frame-indifferent stored energy functions that are minimized, non-degenerate and smooth at identity, and satisfy a growth condition from below.
\begin{definition}\label{def:walphap}
  For $\alpha>0$ and $p>1$, we denote by $\mathcal W_{\alpha}^p$ the class of Borel functions $W:\R^{d\times d}\to[0,+\infty]$ which satisfies the following  properties:
  \begin{itemize}
  \item[(W1)] $W$ satisfies $p$-growth from below, i.e.
    \begin{equation}\label{ass:pgrowth}
      \alpha|F|^p-\frac{1}{\alpha}\leq W(F)\quad\mbox{for all $F\in\R^{d\times d}$};
    \end{equation}
  \item[(W2)] $W$ is frame indifferent, i.e.
    \begin{equation*}
      W(RF)=W(F)\quad\mbox{for all $R\in\SO d$, $F\in\R^{d\times d}$};
    \end{equation*}
  \item[(W3)] $F=\Id$ is a natural state and $W$ is non-degenerate, i.e.\ $W(\Id)=0$ and 
    \begin{align}
      W(F)&\geq \alpha\dist^2(F,\SO d)\quad\mbox{for all $F\in\R^{d\times d}$;}\label{ass:onewell}
    \end{align}
  \item[(W4)] $W$ is $C^3$ in a neighborhood of $\SO d$ and
    \begin{equation}\label{ass:wd2lip}
      \|W\|_{C^3(\overline {U_{\alpha}})}<\frac{1}{\alpha},
    \end{equation}
    where we use the notation 
    \begin{equation}\label{def:Ur}
      U_r:=\{F\in \R^{d\times d}\,:\,\dist(F,\SO d)<r\}.
    \end{equation}
  \end{itemize}
 \end{definition}
 \begin{remark}
   Note that we use the same constant $\alpha$ in the growth condition
   (W1), the non-degeneracy condition (W3) and the regularity
   assumption (W4). The only reason for this is that we want to reduce the number
   of parameters invoked in the assumption for $W$. Let us anticipate
   that the region, in which the multi-cell formula reduces to a
   single-cell expression, will depend (in a quite implicit way) on
   the constants in (W1), (W3) and (W4). Hence, working with the
   single parameter $\alpha$ simplifies the presentation. Note that we
   have $W^p_{\alpha}\subset W^{p}_{\alpha'}$ for $0<\alpha'<\alpha$.
 \end{remark}
\begin{assumption}\label{ass:W:1}
  Fix $\alpha>0$ and $p\geq d$. We suppose that
  \begin{enumerate}[(i)]
  \item  $W:\R^d\times \R^{d\times d}\to [0,+\infty]$ is a Borel function that is $Y:=[0,1)^d$ periodic in its first variable and $W(y,\cdot)\in\mathcal W_{\alpha}^p$ for almost every $y\in Y$.
  \item  $W\in C^3(\R^d\times \overline {U_\alpha})$. 
  \end{enumerate}
\end{assumption}
We will see below, that the smoothness assumption (ii) can be replaced by the assumption that the composite is layered, e.g. $W(y,F)=W(y_1,F)$, see Assumption~\ref{ass:W:layerd}.

\paragraph{Validity of the single-cell homogenization formula.} Our first result proves the validity of the single-cell formula for small, but finite strains:
\begin{theorem}\label{T:1cell}
Suppose Assumption~\ref{ass:W:1} is satisfied. Then there exists $\bar{\bar\rho}>0$  such that for all 
\begin{equation*}
  F\in U_{\bar{\bar\rho}}:=\big\{\,F\in \R^{d\times d}\,:\,\dist(F,\SO d)<\bar{\bar\rho}\,\big\}
\end{equation*}
the following properties are satisfies:
\begin{enumerate}[(a)]
\item (Single-cell formula). 
  \begin{equation*}
    W_\ho(F)=W_\ho\super{1}(F).
  \end{equation*}
\item (Corrector). There exists a unique corrector $\phi(F)\in W^{1,p}_{\per}(Y,\R^d)$ such that $\int_Y\phi(F)=0$ and
  \begin{eqnarray*}
      W_\ho(F)&=&\int_YW(y,F+\nabla \phi(y,F))\,dy.
  \end{eqnarray*}
  The corrector satisfies $\phi(F)\in W^{1,\infty}(Y)$.
\item (Regularity and quadratic expansion). $W_\ho\in C^3(U_{\bar{\bar\rho}})$ and for all $G\in\R^{d\times d}$ we have 
\begin{align}\label{C:d2Whom:claim}
 DW_\ho(F)[G]=&\int_YDW(y,F+\nabla \phi(y,F))[G]\,dy\notag,\\
 D^2W_\ho(F)[G,G]=&\inf_{\psi\in H_\per^1(Y)}\int_YD^2W(y,F+\nabla \phi(y,F))[G+\nabla \psi(y),G+\nabla \psi(y)]\,dy,
\end{align}
where $\phi(F)$ denotes the corrector defined in (b).
\item (Strong rank-one convexity). There exists $c>0$ such that 
\begin{equation}\label{limit:nondegenerate}
  D^2W_\ho(F)[a\otimes b,a\otimes b]\geq  c|a\otimes b|^2\quad\mbox{for all $a,b\in\R^d$}.
\end{equation}
\end{enumerate}
\end{theorem}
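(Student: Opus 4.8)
The plan is to proceed via an implicit function theorem argument at the level of the Euler--Lagrange equation, exploiting the non-degeneracy at $\SO d$ (assumption (W3)) and the $C^3$-regularity near $\SO d$ (assumption (W4) and Assumption~\ref{ass:W:1}(ii)). The key preliminary observation is that for $F=R\in\SO d$, the corrector problem $\inf_{\phi\in W^{1,p}_\per(Y)}\fint_Y W(y,R+\nabla\phi)\,dy$ has value zero, attained by $\phi\equiv 0$: indeed $W\geq 0$ and $W(y,R)=0$ by frame indifference and (W3). So near $\SO d$ we are perturbing off a known, trivial, \emph{non-degenerate} minimizer, which is exactly the setting in which the single-cell formula and smooth correctors can be extracted.

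First I would set up the corrector equation. For $F$ in a small neighborhood of $\SO d$ I seek $\phi(F)\in W^{1,p}_\per(Y,\R^d)$, mean zero, solving the weak Euler--Lagrange equation $\fint_Y DW(y,F+\nabla\phi(F))[\nabla\psi]\,dy=0$ for all test functions $\psi$. I would define a map between Banach spaces — say from a neighborhood of $(\Id,0)$ in $\R^{d\times d}\times \big(W^{1,p}_\per(Y,\R^d)/\R^d\big)$ into the dual space of the correctors (or, to get $W^{1,\infty}$-regularity, into an $L^q$-type space using elliptic regularity / $C^{1,\beta}$-Schauder estimates for the linearized system, which is legitimate because $W\in C^3$ near $\SO d$ and the composite structure enters only through the $y$-dependence, handled by the smoothness in Assumption~\ref{ass:W:1}(ii) or by the layered structure). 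The derivative of this map with respect to $\phi$ at $(F,\phi)=(\Id,0)$ is the operator $\psi\mapsto \fint_Y D^2W(y,\Id)[\nabla\phi_{\cdot},\nabla\psi]$; by (W3) the quadratic form $D^2W(y,\Id)$ is coercive on symmetric matrices and hence, by the classical Korn-type argument for periodic fields, the associated bilinear form is coercive on $W^{1,2}_\per(Y,\R^d)/\R^d$, so this linearized operator is invertible. The implicit function theorem then yields, on some open ball $U_{\bar{\bar\rho}}$ around $\SO d$, a unique small mean-zero solution $\phi(F)$ depending $C^2$ (in fact as smoothly as the data allows, giving the $C^3$ claim on $W_\ho$) on $F$, with $\|\nabla\phi(F)\|_{L^\infty}$ small; this is part (b) and, after differentiating the functional $F\mapsto \fint_Y W(y,F+\nabla\phi(F))$, the first line of part (c).

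For part (a) — that this single-cell expression is genuinely the full multi-cell infimum $W_\ho(F)$, not merely $W_\ho^{(1)}(F)$ — I would argue that the periodic corrector $\phi(F)$ is actually a \emph{global} minimizer of $\phi\mapsto\fint_{kY}W(y,F+\nabla\phi)$ over $W^{1,p}_\per(kY)$ for every $k$, by a convexity-in-disguise argument near $\SO d$: shrink $\bar{\bar\rho}$ so that on $U_{\bar{\bar\rho}}$ the map $F\mapsto W(y,F)$ is uniformly convex in the relevant directions (or satisfies a quantitative quasiconvexity/$W^{1,2}$-coercivity estimate around $R+\nabla\phi(F)$ coming from (W3)–(W4)), which forces any competitor of small energy to have gradient trapped in $U_{\bar{\bar\rho}}$ and then forces it to coincide with $F+\nabla\phi(F)$ up to the periodic structure; a truncation argument handles competitors whose gradient occasionally leaves $U_{\bar{\bar\rho}}$, using the $p$-growth from below (W1) to control the energy contribution of the bad set (this is where $p\geq d$ is used, to get the necessary $W^{1,p}\hookrightarrow$ compactness/truncation leverage). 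Finally, part (d): the second line of (c) expresses $D^2W_\ho(F)$ as the homogenized (cell-averaged, corrected) quadratic form of $D^2W(y,F+\nabla\phi(F))$; since that integrand is, by (W3) and continuity, uniformly coercive on symmetric matrices for $F$ near $\SO d$, standard linear homogenization theory for elasticity tensors — the homogenized tensor of a family of uniformly coercive, symmetric, periodic tensors is again coercive on symmetrized gradients, equivalently strongly rank-one convex — yields \eqref{limit:nondegenerate} with a constant $c>0$ uniform on $U_{\bar{\bar\rho}}$.

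The main obstacle I expect is part (a), i.e.\ upgrading from the single-cell formula $W_\ho^{(1)}$ to the multi-cell formula $W_\ho=\inf_k W_\ho^{(k)}$: one must rule out the Müller/Geymonat--Müller--Triantafyllidis buckling phenomenon, showing that no large-period competitor can beat the periodic corrector. This requires a quantitative argument — essentially a stability estimate for the corrector that is robust in the period $k$ — combining the lower bound (W3) (which gives $\dist^2$-coercivity away from rotations), frame indifference (which lets one use the geometric rigidity estimate of Friesecke–James–Müller to pass from pointwise distance control to an $L^2$ control of $\nabla u$ by a single rotation on each period cell), and a careful gluing/truncation on the set where a competitor's gradient is far from $\SO d$. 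Everything else (the IFT construction, the differentiation formulas, the rank-one convexity via linear homogenization) is, by comparison, routine once the correct functional-analytic framework — periodic Sobolev quotient spaces plus Schauder estimates for the linearized cell problem — is in place.
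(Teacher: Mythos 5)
There is a genuine gap, and it sits exactly at the heart of the theorem: part (a) (and with it the identification of your corrector as the actual minimizer in (b)). Your plan for (a) is to shrink $\bar{\bar\rho}$ until $W(y,\cdot)$ is ``uniformly convex in the relevant directions'' (or satisfies a quantitative quasiconvexity estimate) on $U_{\bar{\bar\rho}}$, trap competitors' gradients there, and handle excursions by truncation and geometric rigidity. But no such local convexity is available: frame indifference forces $D^2W(y,R)$ to vanish on skew-symmetric directions for every $R\in\SO d$ (curves $t\mapsto e^{tA}R$ stay in the zero set of $W$), so $D^2W$ is degenerate on every neighborhood of $\SO d$ and (W3) only gives coercivity on symmetrized, rotated increments. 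A quantitative estimate ruling out large-period (buckling) competitors, uniformly in the cell size $k$, is precisely the hard content of the theorem — you acknowledge it as ``the main obstacle'' but do not supply it, and the tools you list (FJM rigidity plus truncation via (W1)) do not obviously assemble into such an estimate. The paper avoids this entirely by a different mechanism: it constructs a \emph{matching convex lower bound} $V$ with $W(y,F)+\mu\det F\geq V(y,F)$ everywhere and equality on $U_\delta$ (the Null-Lagrangian $\mu\det$ repairs exactly the skew degeneracy you would be fighting). Since $\det$ is unaffected by periodic fluctuations with $p\geq d$ and the convex single-cell formula $V_\ho=V_\ho^{(1)}$ holds, one gets $W_\ho(F)\geq V_\ho(F)-\mu\det F$ for \emph{all} periods at once; the reverse inequality, hence $W_\ho=W_\ho^{(1)}$, then follows from an $L^\infty$ bound on the $V$-corrector gradient (obtained by an implicit function theorem applied to the convex Euler--Lagrange equation) which activates the matching $W=V-\mu\det$ along $F+\nabla\phi(F)$.

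A second, related gap: your implicit function theorem applied directly to the nonconvex Euler--Lagrange equation does produce a smooth family of critical points $\phi(F)$ (the linearization at $(\Id,0)$ is indeed invertible by (W3) and periodic Korn), but a critical point of a non-quasiconvex cell problem with no upper growth is not known to be a minimizer, nor is existence/uniqueness of a minimizer of $W_\ho^{(1)}(F)$ clear by the direct method; IFT uniqueness is only uniqueness among small solutions. In the paper these identifications again come from the sandwich $W+\mu\det\geq V$ with equality near $\SO d$: $\phi(F)$ is the unique minimizer of the strictly convex $V$-cell problem, and the chain of inequalities in Step~3/Step~5 of the paper's proof pins it down as the unique corrector for $W_\ho(F)$. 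Your remaining steps (the differentiation formulas in (c) via smooth dependence of $\phi(F)$ on $F$, and (d) via coercivity of the corrected quadratic form plus periodic Korn and $|\sym(a\otimes b)|^2\geq\tfrac12|a\otimes b|^2$) are plausible in outline, but they all presuppose (a) and the minimizer identification, so the proposal as it stands does not prove the theorem.
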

The proof of Theorem~\ref{T:1cell} is given in Section~\ref{sec:1cell}. As indicated in the introduction, the key part of Theorem~\ref{T:1cell} is the validity of the single-cell formula and the existence of correctors. Next, we give a short overview of the proof. A key ingredient is the construction of a \textit{matching convex lower bound}. It is based on the following observation: For every $W\in\mathcal W_p^\alpha$, $p\geq d$, there exists a perturbation by a Null-Lagrangian $\overline W:=W+\mu\det$, for some $\mu>0$, such that $\overline W$ is strongly convex in a neighborhood of $\SO d$. It is then possible to construct a \textit{strongly convex} function $V$ satisfying $\overline W\geq V$ on $\R^{d\times d}$ and $\overline W(F)=V(F)$ for all $F$ close to $\SO d$ (more precisely, for all $F\in U_\delta$ with $\delta>0$ only depending on $\alpha, p, d$ and $\mu$, and $U_\delta$ defined as in  \eqref{def:Ur}). 
\smallskip

This crucial observation and the construction of $V$ has been established in a discrete setting in the works by Friesecke and Theil~\cite{FT02} and Conti et.~al.~\cite{CDMK06}, where the validity of the Cauchy-Born rule for discrete elastic energies is studied. More precisely, they prove that minimizers of certain discrete energies subject to an affine boundary condition (assumed to be close to a rigid motion), are affine. In Lemma~\ref{L:wv} below we adapt their construction to the continuum setting and obtain for any $W\in \mathcal W_p^\alpha$ a convex energy density $V$ satisfying the following properties:
\begin{enumerate}[(a)]
 \item For all cubes $Q\subset \R^d$,  $\varphi\in W_0^{1,\infty}(Q)$, and $F\in \R^{d\times d}$ we have
$$\fint_QW(F+\nabla \varphi)\geq\fint_QV(F+\nabla \varphi)-\mu\det(F+\nabla \varphi)\geq V(F)-\mu\det(F),$$
\item For all $F\in U_\delta$ we have $W(F)=V(F)-\mu\det(F)$.
\end{enumerate}
We then exploit this construction in the context of homogenization: To any periodic energy density $W(y,F)$ satisfying Assumption~\ref{ass:W:1} (i), we can associate a \textit{matching convex lower bound} $V=V(y,F)$, which satisfies the following variant of  (a):
\begin{enumerate}[(a)]
\item[(a')] For all $F\in \R^{d\times d}$ we have
  $$W_\ho(F)\geq V_\ho^{(1)}(F)-\mu\det(F).$$
\end{enumerate}
In the derivation of (a') we exploit the convexity of $V$ in form of the identity $V_\ho=V_\ho^{(1)}$, and appeal to the fact that $\det$ (as a Null-Lagrangian) is invariant w.r.t. periodic fluctuations. To conclude $W_\ho=W_\ho\super1$ (close to $\SO d$), we require also a variant of the matching property (b), namely: $W_\ho^{(1)}(F)=V_\ho^{(1)}(F)-\mu\det(F)$ for all $F$ close to $\SO d$. We obtain this identity by showing that the corrector $\phi(F)$ associated with the convex energy density $V$ is small in the sense of $\|F+\nabla \phi(F)\|_{L^\infty(Y)}<\delta$. We achieve this by appealing to the implicit function theorem, which we apply to the corresponding Euler-Lagrange equation $-\divv DV(y,F+\nabla \phi(y))=0$. As a consequence we obtain a representation of $W_\ho(F)$ in terms of the corrector $\phi(F)$, which turns out to be the unique minimizer of the minimization problem in the definition of $W_\ho\super 1(F)$.
We would like to remark that although the definition of the corrector $\phi(F)$ is purely variational (it is a minimizer of a convex integral functional), for the Lipschitz estimate, it seems to be necessary to use non-variational regularity techniques such as the implicit function theorem. This is also the only place where we require the additional smoothness assumption \eqref{ass:W:1} (ii).  In dimension $d=2$, we might replace the non-variational regularity argument by appealing to direct methods for regularity  of smooth convex integral functionals.

\begin{remark}\label{rem:commutability}
Let us briefly comment on the commutability of homogenization and linearization in a neighborhood of the rotations. In the situation of Theorem~\ref{T:1cell} let $F\in U_{\bar{\bar\rho}}$ and consider the deformation
\begin{equation*}
  u_\e(x):=Fx+\e\phi(\tfrac{x}{\e},F).
\end{equation*}
Part (a) and (b) of the theorem imply that $u_\e$ is an equilibrium state in the sense that
\begin{equation*}
  \int_{\R^d} DW(\tfrac{x}{\e},\nabla u_\e(x))[\nabla\eta(x)]\,dx=0\qquad\text{for all }\eta\in C^\infty_c(\R^d).
\end{equation*}
Moreover, on a macroscopic scale $u_\e$ behaves as a linear deformation with slope $F$. Indeed, we have $u_\e \to u_0(x):=Fx$ strongly in $L^p_{\loc}(\R^d)$. We argue that homogenization and linearization at $u_\e$ (resp. $u_0$) commute in the following sense: Consider the functional 
\begin{equation*}
 \mathcal G_{\e,h}\,:\,W^{1,p}_0(A,\R^d)\to\R,\qquad \mathcal G_{\e,h}(\varphi):=\frac{1}{h^2}\int_AW(\tfrac{x}{\e},\nabla u_\e(x)+h\nabla \varphi(x))-W(\tfrac{x}\e,\nabla u_\e(x))\,dx,
\end{equation*}
and suppose that $W$ satisfies standard $p$-growth. Then the homogenization result for non-convex integral functionals (e.g. \cite{Mueller87}) implies that as $\e\downarrow 0$, $\mathcal G_{\e,h}$ $\Gamma$-converges (in $L^p$) to 
\begin{equation*}
  \mathcal G_{\ho,h}(\varphi):=\frac{1}{h^2}\int_A W_{\hom}(F+h\nabla\varphi(x))-W_\ho(F)\,dx.
\end{equation*}
Furthermore, thanks to the $C^3$-regularity of $W_{\ho}$ (see statement (c)), we have the (pointwise) linearization limit
\begin{equation*}
 \mathcal G_{\ho,\operatorname{lin}}(\varphi):=\lim\limits_{h\downarrow 0}\mathcal G_{\hom,h}(\varphi)=\int_A \tfrac12 D^2W_{\hom}(F)[\nabla\varphi(x),\nabla\varphi(x)]\,dx.
\end{equation*}
On the other hand, by first linearizing we get 
\begin{equation*}
 \mathcal G_{\e,\operatorname{lin}}(\varphi):=\lim\limits_{h\downarrow 0}\mathcal G_{\e,h}(\varphi)=\int_A \tfrac12 D^2W(\tfrac{x}{\e},\nabla u_\e(x))[\nabla\varphi(x),\nabla\varphi(x)]\,dx,
\end{equation*}
which holds thanks to the Lipschitz estimate on $\phi(F)$ and the regularity of $W$ close to $\SO d$. Since $D^2W(\frac{x}{\e},\nabla u_\e(x))=D^2W(y,F+\nabla\phi(y,F))\vert_{y=\frac{x}{\e}}$ is a (rescaled), periodic second order tensor which is (thanks to $W(\cdot,\nabla u_\e)=V(\cdot,\nabla u_\e)-\mu\det(\nabla u_\e)$, see Corollary~\ref{C:corrector}) elliptic in the integrated form, linear elliptic homogenization and formula \eqref{C:d2Whom:claim} show that $\mathcal G_{\e,\operatorname{lin}}$ $\Gamma$-converges (in $L^2$) to the functional $\mathcal G_{\ho,\operatorname{lin}}$ as well. Hence, both paths lead to the same limit and we may say that homogenization and linearization at $u_\e$ (resp. $u_0$) commute. In future work, we show that the pointwise linearization limit in the above argument can be replaced by $\Gamma$-convergence and provide quantitative estimates for the difference between minimizers for certain non-linear energies and its linearization.

\end{remark}

\paragraph{Quantitative two-scale expansion and homogenization error.}%
With help of the single-cell formula and its representation with help of the corrector, i.e. Theorem~\ref{T:1cell}, we are in position to provide estimates on the homogenization error in regimes of small strains. To this end, we introduce the functionals 
\begin{align*}
  \mathcal I_{\e}(u):=\int_AW(\tfrac{x}\e,\nabla u)-f\cdot u\,dx,\quad  \mathcal I_{\hom}(u):=\int_AW_{\hom}(\nabla u)-f\cdot u\,dx.
\end{align*}
We have the following result, which to the best of our knowledge, is the first quantitative homogenization result for non-convex, vectorial integral functionals:
\begin{theorem}[Homogenization error]\label{T:NC:intro}
Suppose Assumption~\ref{ass:W:1} is satisfied. Let $A\subset\R^d$ be a bounded domain of class $C^2$, and fix $r>d$. Then there exists $\bar \rho\ll 1$  such that following statement holds:
\smallskip

If the data $f\in L^r(\R^d)$ and  $g\in W^{2,r}(A)$ is small in the sense of
\begin{equation}\label{def:Lambda:intro}
 \Lambda(f,g):=\|f\|_{L^r(A)}+\|g-\id\|_{W^{2,r}(A)}<\bar\rho,
\end{equation}
then:
\begin{enumerate}[(a)]
 \item The minimization problem $\inf\{\mathcal I_\ho( u)\, |\, u\in g+W_0^{1,p}(A)\}$ admits a unique minimizer $u_0\in W^{2,r}(A)$.
 \item For every $u_\e\in g+W_0^{1,p}(A)$, the following estimate is true
    \begin{align}\label{estimate:intro}
      \|u_\e-u_0\|_{L^2(A)}+\|u_\e-(u_0+\e\nabla\phi(\tfrac{\cdot}\e,\nabla u_0))\|_{H^1(A)}&\lesssim \e^\frac12\Lambda(f,g) +\left(\mathcal I_\e(u_\e)-\inf_{g+W_0^{1,p}(A)} \mathcal I_\e\right)^\frac12 
    \end{align}
    where $\lesssim$ means $\leq$ up to a constant only depending on $d, A, \alpha,p$ and $\bar\rho$.
  \end{enumerate}
\end{theorem}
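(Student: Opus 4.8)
\textbf{Proof strategy for Theorem~\ref{T:NC:intro}.}

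\medskip

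The plan is to exploit the two-scale structure provided by Theorem~\ref{T:1cell} together with the strong rank-one convexity \eqref{limit:nondegenerate} and the smoothness of $W_\ho$, treating $\mathcal I_\ho$ as a perturbation of the quadratic (linearized) functional near the identity. I would proceed in four steps.

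\medskip

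\textbf{Step 1 (Existence, uniqueness, and regularity of $u_0$).} First I would establish part (a). Since $\Lambda(f,g)$ is small, the minimizer $u_0$ of $\mathcal I_\ho$ over $g+W_0^{1,p}(A)$ should satisfy $\nabla u_0\in U_{\bar{\bar\rho}}$, so we may work entirely inside the region where Theorem~\ref{T:1cell} applies. Existence follows from the direct method using (W1) (which is inherited by $W_\ho$ via the growth of the multi-cell formula) and quasiconvexity/convexity of the matching lower bound. For uniqueness and the Euler--Lagrange equation $-\divv DW_\ho(\nabla u_0)=f$, I would use strong rank-one convexity \eqref{limit:nondegenerate} of $W_\ho$ on $U_{\bar{\bar\rho}}$: it yields strict convexity of the functional along the relevant class of competitors once one knows $\nabla u_0$ stays in this neighborhood, which is a bootstrap argument. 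For the $W^{2,r}$-regularity of $u_0$, I would invoke elliptic regularity for the quasilinear system with $C^2$ coefficients $DW_\ho\in C^2(U_{\bar{\bar\rho}})$ (statement (c)) on the $C^2$-domain $A$, using $f\in L^r$ and $g\in W^{2,r}$; a contraction/implicit-function argument in $W^{2,r}$ near the constant solution $\id$ (for $f=0,g=\id$) gives both the $W^{2,r}$-bound and smallness: $\|u_0-\id\|_{W^{2,r}(A)}\lesssim\Lambda(f,g)$, hence in particular $\|\dist(\nabla u_0,\SO d)\|_{L^\infty}\lesssim\Lambda(f,g)$ by Sobolev embedding ($r>d$), so $\nabla u_0(x)\in U_{\bar{\bar\rho}}$ pointwise, closing the bootstrap.

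\medskip

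\textbf{Step 2 (Two-scale expansion / energy comparison).} For part (b), set $z_\e:=u_0+\e\phi(\tfrac{\cdot}\e,\nabla u_0)$ (suitably cut off near $\partial A$ so that it is an admissible competitor, i.e. $z_\e-g\in W_0^{1,p}(A)$). The corrector being Lipschitz (statement (b): $\phi(F)\in W^{1,\infty}$, with bounds locally uniform in $F\in U_{\bar{\bar\rho}}$) and $u_0\in W^{2,r}$ make this construction legitimate and give $\|z_\e-u_0\|_{L^\infty}\lesssim\e\Lambda$. The key algebraic point is that, by the corrector equation and the matching identity $W(\cdot,\nabla u_0+\nabla\phi(\nabla u_0))=V(\cdot,\nabla u_0+\nabla\phi(\nabla u_0))-\mu\det(\nabla u_0+\nabla\phi(\nabla u_0))$ (Corollary~\ref{C:corrector} as cited in Remark~\ref{rem:commutability}), the energy density $W(\tfrac x\e,\nabla z_\e)$ is, up to a Null-Lagrangian and lower-order terms controlled by $\nabla^2 u_0\in L^r$, equal to $W_\ho(\nabla u_0)$ plus a term linear in the oscillation that integrates to something $O(\e)$ after using periodicity of $\phi$ and the $W^{2,r}$-regularity of $u_0$. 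Concretely I expect
\begin{equation*}
 \mathcal I_\e(z_\e)\leq \mathcal I_\ho(u_0)+C\e\,\Lambda(f,g)^2,
\end{equation*}
obtained by Taylor-expanding $W(\tfrac x\e,\cdot)$ around $\nabla u_0(x)+\nabla\phi(\tfrac x\e,\nabla u_0(x))$ and absorbing the first-order term via the Euler--Lagrange equation of $u_0$ (the homogenized equation) together with $DW_\ho(F)[G]=\int_Y DW(y,F+\nabla\phi(y,F))[G]\,dy$ from \eqref{C:d2Whom:claim}.

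\medskip

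\textbf{Step 3 (Lower bound / coercivity of the excess).} Conversely, for arbitrary $u_\e\in g+W_0^{1,p}(A)$ I would bound the excess energy from below. Write $\mathcal I_\e(u_\e)-\mathcal I_\e(z_\e)$ and use the matching convex lower bound $V$: since $W(\tfrac x\e,\cdot)\geq V(\tfrac x\e,\cdot)-\mu\det$ globally, with equality near $\SO d$, and $\det$ is a Null-Lagrangian (so its integral depends only on boundary data, which $u_\e$ and $z_\e$ share), we get
\begin{equation*}
 \mathcal I_\e(u_\e)-\mathcal I_\e(z_\e)\geq \int_A\big(V(\tfrac x\e,\nabla u_\e)-V(\tfrac x\e,\nabla z_\e)\big)\,dx-\int_A f\cdot(u_\e-z_\e)\,dx.
\end{equation*}
Because $V$ is strongly convex with $D^2V$ bounded below near $\SO d$ (uniformly in $y$), and $\nabla z_\e$ stays in that region (by Step 1--2 smallness), I expect a coercivity estimate of Caccioppoli type,
\begin{equation*}
 \int_A |\nabla u_\e-\nabla z_\e|^2\,dx\lesssim \big(\mathcal I_\e(u_\e)-\mathcal I_\e(z_\e)\big)+\big(\text{l.o.t.}\big),
\end{equation*}
where the lower-order terms are handled by the $f$-term via Poincar\'e and are $O(\e\Lambda^2)$ or absorbed. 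Here one must be slightly careful because $\nabla u_\e$ need not be close to $\SO d$: the point of the globally defined convex $V$ with $V\leq W+\mu\det$ is precisely to provide a convex minorant valid for \emph{all} gradients, so that strong convexity of $V$ can be used on the whole domain (with the quadratic lower bound degenerating to merely a convexity inequality where $\nabla u_\e$ is far from $\SO d$, which still suffices to control the $H^1$-distance after combining with the growth (W1)).

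\medskip

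\textbf{Step 4 (Assembling the estimate).} Combining Steps 2 and 3: $\mathcal I_\e(u_\e)-\inf_{g+W_0^{1,p}}\mathcal I_\e\geq \mathcal I_\e(u_\e)-\mathcal I_\e(z_\e)-C\e\Lambda^2$, hence
\begin{equation*}
 \|\nabla u_\e-\nabla z_\e\|_{L^2(A)}^2\lesssim \e\,\Lambda(f,g)^2+\big(\mathcal I_\e(u_\e)-\inf_{g+W_0^{1,p}(A)}\mathcal I_\e\big).
\end{equation*}
Taking square roots and using the Poincar\'e inequality (since $u_\e-z_\e\in W_0^{1,p}$ plus the cutoff correction) gives the $H^1$-part of \eqref{estimate:intro} with $z_\e=u_0+\e\phi(\tfrac\cdot\e,\nabla u_0)$. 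The $L^2$-estimate $\|u_\e-u_0\|_{L^2}$ then follows from the $H^1$-estimate on $u_\e-z_\e$ plus $\|z_\e-u_0\|_{L^2}=\e\|\phi(\tfrac\cdot\e,\nabla u_0)\|_{L^2}\lesssim\e\Lambda(f,g)$, using the Lipschitz bound on $\phi$ and $\|\nabla u_0-\Id\|_{L^\infty}\lesssim\Lambda(f,g)$.

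\medskip

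\textbf{Main obstacle.} I expect the hardest point to be Step 3 --- the coercivity/lower bound for arbitrary admissible $u_\e$ whose gradient is \emph{not} assumed to be close to $\SO d$. One must genuinely use the matching convex lower bound $V$ globally (not just the local $C^3$-structure of $W$) together with the Null-Lagrangian cancellation of $\det$ on fixed boundary data, and then carefully track how the quadratic coercivity of $V$ near $\SO d$ degrades far from $\SO d$ while still controlling $\|\nabla u_\e-\nabla z_\e\|_{L^2}$; a secondary technical nuisance is the boundary-layer correction (cutting off $\e\phi(\tfrac\cdot\e,\nabla u_0)$ near $\partial A$) which contributes the $\sqrt\e$ (rather than $\e$) rate and must be estimated using $C^2$-regularity of $\partial A$ and the $W^{1,\infty}$-bound on $\phi$.
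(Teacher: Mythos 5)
Your overall architecture (matching convex lower bound $V$, Null--Lagrangian cancellation of $\det$ on fixed boundary data, two-scale expansion, implicit-function-theorem regularity for $u_0$) is the right one, but there are two genuine gaps at exactly the points the paper has to work hardest. First, in Steps 2--3 you repeatedly use that $\nabla z_\e$ stays uniformly close to $\SO d$ (``by Step 1--2 smallness''). This is false in general: $\nabla z_\e$ contains the term $\e\,\eta\, D\phi(\tfrac{x}{\e},\nabla u_0)[\partial_k\nabla u_0]\otimes e_k$, and $\nabla^2 u_0$ is only in $L^r$, not $L^\infty$, so no smallness of $\Lambda(f,g)$ makes $\nabla z_\e$ pointwise close to $\SO d$. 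Since $W$ has no upper growth condition (it may equal $+\infty$ away from $\SO d$), your claimed bound $\mathcal I_\e(z_\e)\le \mathcal I_\ho(u_0)+C\e\Lambda^2$ and the matching identity $W(\tfrac{x}{\e},\nabla z_\e)=V(\tfrac{x}{\e},\nabla z_\e)-\mu\det\nabla z_\e$ used in Step 3 can both fail (the left-hand side can even be infinite). The paper repairs this by a Lipschitz truncation of $\nabla u_0$ (with $\lambda\sim\e^{-d/r}$), producing a modified expansion $\bar v_\e$ for which $\|\dist(\nabla\bar v_\e,\SO d)\|_{L^\infty}<\delta$ and $\|\bar v_\e-v_\e\|_{H^1}=O(\e)$; this construction is not optional.

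Second, your Step 3 coercivity $\int_A|\nabla u_\e-\nabla z_\e|^2\lesssim \big(\mathcal I_\e(u_\e)-\mathcal I_\e(z_\e)\big)+\text{l.o.t.}$ cannot follow from strong convexity of $V$ alone, because $z_\e$ is not a critical point of the convex energy $\mathcal E_\e$: strong convexity gives $\mathcal E_\e(u_\e)\ge \mathcal E_\e(z_\e)+D\mathcal E_\e(z_\e)[u_\e-z_\e]+\tfrac{\beta}{2}\|\nabla(u_\e-z_\e)\|_{L^2}^2$, and the first-variation term is precisely the homogenization error; controlling it is the heart of the matter and requires showing that the oscillating flux $J(\tfrac{x}{\e},\nabla u_0)$ is an $\e$-divergence, i.e. the flux corrector $\sigma$ (which you never introduce). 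The paper avoids estimating this term against a non-minimizer by inserting the exact minimizer $u_\e$ of the convex heterogeneous functional $\mathcal E_\e$ (so strong convexity applies with no linear term) and proving a separate convex error estimate (Proposition~\ref{P:C}) using $(\phi,\sigma)$ and their Lipschitz bounds; your competitor and the expansion are then both compared to that minimizer. Relatedly, in Step 1 uniqueness of $u_0$ cannot come from strong rank-one convexity of $W_\ho$ (that does not give strict convexity of the integral functional); the paper gets existence and uniqueness by showing the minimizer of the strongly convex $\mathcal E_\ho$ is also the unique minimizer of $\mathcal I_\ho$, again via $W_\ho\ge V_\ho-\mu\det$ with equality along $\nabla u_0$ and the Null--Lagrangian cancellation.
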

In Section~\ref{sec:hom}, we prove a slightly more general result, see Theorem~\ref{T:NC}, where the smallness assumption on $\|g-\id\|_{W^{2,r}(A)}$ regarding the boundary data is relaxed in the sense that the identity map $\id(x)=x$ can be replaced by a sufficiently smooth equilibrium deformation. The theorem above is a special case of Theorem~\ref{T:NC}.
\smallskip

Clearly, if $\mathcal I_\e$ admits a minimizer in $g+W_0^{1,p}(A)$, Theorem~\eqref{T:NC:intro} yields an error estimate for the corresponding minimizer. The $\e^{\frac12}$ scaling for the $H^1$ error in the two-scale expansion matches the corresponding estimates for linear elliptic systems (without boundary layer corrections), see below, and we thus expect this to be optimal. 

Notice that in view of the counterexample to the single-cell formula given in \cite[Theorem 4.3]{Mueller87}, the minimizer of $\mathcal I_\e$ might oscillate on a scale much larger than the periodicity-cell, and thus the two-scale expansion is not an appropriate approximation in general. Hence, some smallness assumptions,  such as \eqref{def:Lambda:intro}, on the data are necessary for the validity of estimate \eqref{estimate:intro}. 

\medskip

Estimates on the homogenization error and the error term in two-scale expansions have a long history in homogenization theory. Classical results mainly treat the case of linear elliptic equations or systems in divergence form. In the following, we briefly review some classical and recent results. We start with the scalar problem
  $$u_\e\in H_0^1(A),\quad -\divv (a(\tfrac{\cdot}{\e})\nabla u_\e)=f\quad\mbox{in $A$},$$
  where $a:\R^d\to\R^{d\times d}_{\sym}$ is $Y$-periodic and uniformly elliptic and bounded. For $f\in L^2(A)$, the solution $u_\e$ converges weakly in $H^1(A)$ to the unique solution of
  $$u_0\in H_0^1(A),\quad -\divv(a_\ho\nabla u)=f\quad\mbox{in $A$}.$$
  In contrast to the non-linear case, for linear equations the corrector tensorizes in the sense that $\phi(\xi)=\phi_{i}\xi_{i}$ where $\phi_{i}$ is the corrector for $e_i$. Hence, the two-scale expansion reads $u_0+\e \phi_{i}\partial_i u_0$, where $u_0$ is the unique solution of the homogenized problem. If one has either additional regularity of the corrector, namely $\phi_{i}\in W^{1,\infty}(Y)$, or of the solution to the homogenized problem, namely $u_0\in W^{2,\infty}(A)$, then the quantitative two-scale expansion
$$\|u_\e-(u_0+\e \phi_{i}\partial_i u_0)\|_{H^1(A)}\leq C\e^\frac12,$$ 
holds. This is completely classical and can be found in \cite{BLP75}. The required regularity properties on $\phi$ (resp. $u_0$) can be justified e.g. by assuming $a\in C^{0,\alpha}$ (resp. $f\in C^{0,\alpha}$ with $\alpha\in(0,1)$). In view of the two-scale expansion, one might naively expect the estimate to hold with rate $\e$ (instead of $\e^{\frac12}$). Yet, this in general not true due to the formation of boundary layers ($u_\e$ and the two-scale expansion do not satisfy the same boundary conditions).  By appealing to boundary layer corrections (i.e. adding an additional term to the two-scale expansion), and the maximum principle it is also classical that
$$\|u_\e-u_0\|_{L^2(A)}\leq C\e,$$
even without assuming additional regularity assumptions on the coefficients or the right-hand side $f$, see e.g. \cite{JKO-94}. Let us mention that by now, the above $L^2$ and $H^1$ estimates are available without appealing to any extra regularity assumptions or the maximum principle, and are also valid for elliptic systems (with a regularized version of the two-scale expansion), see \cite{Griso04,ZP05,S13,SZ17}.
\smallskip

For non-linear problems there are only few results available in the literature. In \cite{CPZ05,P08}, the authors consider monotone operators in divergence form. In particular, \cite{P08} studies the (scalar) homogenization problem 
$$u_\e\in H_0^1(A),\quad-\divv a(\tfrac{x}\e,\nabla u_\e)=f\quad\mbox{in $A$,}$$
where $a$ is periodic in the first component and satisfies in the second component certain monotonicity, continuity and growth conditions. It is shown that
$$\|u_\e-u_0\|_{L^2(A)}+\|u_\e-(u_0+\e \phi(\tfrac{x}\e,\nabla u_0))\|_{H^1(A)}\leq C\e^{\frac12}\|f\|_{L^2(A)},$$
where $u_0$ is the solution of the corresponding homogenized problem and $\phi$ is the corrector. The proof of the above estimate uses the DeGiorgi-Nash-Moser regularity for nonlinear elliptic equations. In the course of proving Theorem~\ref{T:NC:intro}, we will also establish a similar estimate in the case of a monotone systems (the Euler-Lagrange equation of a strictly convex minimization problem). Since scalar regularity methods cannot be applied, we appeal to non-variational regularity arguments that require a certain smallness of the data, see Proposition~\ref{P:C}. In principle those smallness assumptions might be considered unsatisfactory for the estimate in the convex case, however in view of our main goal, namely the error estimate for the non-convex case, they seem necessary.

\smallskip

\begin{remark}\label{R:no_growth}
Since no growth conditions from above are assumed for the integrand $W$ in Theorem~\ref{T:NC:intro} the known $\Gamma$-convergence results for homogenization of integral functionals do not apply in this situation. Hence, even the qualitative statement, that is (almost) minimizer of $\inf\{\mathcal I_\e (u)\, |\, u\in g+W_0^{1,p}(A)\}$ converge to minimizer of $\inf\{\mathcal I_\ho(u)\, |\, u\in g+W_0^{1,p}(A)\}$, seems not available in the literature yet.
\end{remark}

\begin{remark}
 Appealing to the Gagliardo-Nirenberg interpolation inequality and the $p\geq d$-growth from below of $W$, we obtain that $\|u_\e-u_0\|_{L^2(A)}$ in \eqref{estimate:intro} can be replaced by $\|u_\e-u_0\|_{L^q(A)}$ for every $q\in[1,\infty)$ (and $q=\infty$ if $p>d$) by the expense that $\e^\frac12$ is replaced by $\e^\frac\delta2$ with $\delta=\delta(d,p,q)\in(0,1]$ and the estimate depends in a non-linear way on $\mathcal I_\e(u_\e)$ and $\Lambda(f,g)$.  
\end{remark}

\begin{remark}
 The proof of part (a) of Theorem~\ref{T:NC:intro} reveals that the global minimizer of $\mathcal I_\ho$ in $g+W_0^{1,p}(A)$ and the solution of the corresponding Euler-Lagrange equation obtained by a implicit function theorem coincide. This can be compared to the work of Zhang~\cite{Zhang91} (see also \cite{CD02}), where it is shown that the global energy minimizer and the solution of the corresponding Euler-Lagrange equation, obtained by the implicit function theorem, coincide if the data is sufficiently small. While in \cite{Zhang91}, the energy density is assumed to be of a special polyconvex structure, we do not assume any global convexity notion but use existence of a matching convex lower bound, see below.
\end{remark}

\subsection{Layered composite}

The smoothness assumption of the energy density in the space variable, cf.~Assumption~\ref{ass:W:1} (ii), can be omitted by considering only variations along one direction:
\begin{assumption}\label{ass:W:layerd}
  Fix $\alpha>0$, $p\geq d$ and $N\in\N$. Let $0=t_0<t_1<\dots<t_N=1$ and $\{W_i\}_{i=1,\dots,N}$ with $W_i\in W_\alpha^p$ for $i=1,\dots,N$ be given. Consider $W :\R^d\times \R^{d\times d}\to [0,+\infty]$ which is $Y$ periodic in the first variable and satisfies for almost every $y\in Y$ and every $F\in \R^{d\times d}$ that
  \begin{equation*}
   W(y,F)=\sum_{i=1}^N \chi_i(y_1)W_i(F)\quad\mbox{where }\chi_i(t)=\begin{cases}
                                                                     1&\mbox{if $t\in(t_{i-1},t_i)$},\\
                                                                     0&\mbox{else.}
                                                                    \end{cases}
  \end{equation*}
\end{assumption}

\begin{proposition}\label{P:layer}
  The conclusion of Theorem~\ref{T:1cell} and Theorem~\ref{T:NC} remain valid if Assumption~\ref{ass:W:1} is replaced by  Assumption~\ref{ass:W:layerd}.
\end{proposition}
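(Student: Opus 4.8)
\emph{Plan.} The proof reduces to supplying a single new ingredient. A careful reading of Sections~\ref{sec:1cell} and \ref{sec:hom} shows that Assumption~\ref{ass:W:1}(ii) is used \emph{only} to establish the Lipschitz bound on the corrector in Theorem~\ref{T:1cell}(b) --- equivalently, the pointwise estimate $\|F+\nabla\phi(F)\|_{L^\infty(Y)}<\delta$ with $\delta,\mu$ as in Lemma~\ref{L:wv}. Everything downstream, namely the matching identity $W_\ho^{(1)}(F)=V_\ho^{(1)}(F)-\mu\det F$ (hence $W_\ho=W_\ho^{(1)}$ via (a'), using that $\det$ is a null Lagrangian), the representation formulas for $DW_\ho$ and $D^2W_\ho$ in Theorem~\ref{T:1cell}(c), the strong rank-one convexity in (d), and the two-scale expansion estimate of Theorem~\ref{T:NC}, uses $W$ only through its $C^3$-regularity near $\SO d$ (which is part of Assumption~\ref{ass:W:layerd}) and through the matching convex lower bound of Lemma~\ref{L:wv}. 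The latter is constructed pointwise in $y$, so applying it to each $W_i$ (with a common $\mu$ and $\delta$, since all $W_i\in\mathcal W_\alpha^p$) produces a layered convex density $V(y,F)=\sum_{i=1}^N\chi_i(y_1)V_i(F)$ with $V_i$ strongly convex near $\SO d$ and $W_i=V_i-\mu\det$ on $U_\delta$. Thus it suffices to re-derive, for such layered $V$, that the corrector $\phi(F)$ of the convex problem lies in $W^{1,\infty}(Y)$ with $\|F+\nabla\phi(F)\|_{L^\infty(Y)}<\delta$ for $F$ near $\SO d$, \emph{without} PDE regularity in the $y$-variable.

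\emph{Reduction to one dimension.} Since $V(y,\cdot)$ is convex and depends on $y$ only through $y_1$, for any $\varphi\in W^{1,p}_\per(Y;\R^d)$ its transverse average $\psi(y_1):=\int_{(0,1)^{d-1}}\varphi(y_1,y')\,dy'$ satisfies $\nabla\psi(y)=\psi'(y_1)\otimes e_1=\int_{(0,1)^{d-1}}\nabla\varphi(y_1,y')\,dy'$ (the transverse columns integrate to zero by periodicity), so Jensen's inequality in $y'$ followed by integration in $y_1$ gives $\fint_Y V(y_1,F+\nabla\varphi)\,dy\ge\int_0^1 V(t,F+\psi'(t)\otimes e_1)\,dt$. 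Hence $V_\ho(F)=V_\ho^{(1)}(F)=\inf\{\int_0^1 V(t,F+a(t)\otimes e_1)\,dt:a\in L^p(0,1;\R^d),\ \int_0^1 a=0\}$, a one-dimensional strictly convex problem; its minimizer $a_F$ (equivalently, by convexity, any critical point) determines the corrector via $\nabla\phi(y,F)=a_F(y_1)\otimes e_1$.

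\emph{Main step: $L^\infty$-smallness of the corrector.} The Euler--Lagrange equation of the one-dimensional problem is $\tfrac{d}{dt}\big(DV(t,F+a_F(t)\otimes e_1)e_1\big)=0$, i.e.\ $t\mapsto DV_{i(t)}(F+a_F(t)\otimes e_1)e_1$ equals a constant vector $\lambda\in\R^d$. Writing $a_F(t)=\sum_i\chi_i(t)a_i$ and $\Sigma_i(F,a):=DV_i(F+a\otimes e_1)e_1$, this, together with the zero-average constraint, becomes the finite-dimensional system
\begin{equation*}
 \Sigma_i(F,a_i)=\lambda\quad(i=1,\dots,N),\qquad \sum_{i=1}^N(t_i-t_{i-1})\,a_i=0
\end{equation*}
for $(a_1,\dots,a_N,\lambda)\in(\R^d)^{N+1}$. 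For $F=R\in\SO d$ it is solved by $a_i=0$, $\lambda=\mu Re_1$: indeed $DW_i$ vanishes on $\SO d$ (each $W_i\ge 0$ attains the value $0$ on all of $\SO d$ by frame indifference), so $DV_i(R)=\mu D\det(R)=\mu\cof R=\mu R$, which is \emph{the same for every $i$}. The linearization of the system at this solution is governed by the layerwise acoustic tensors $\mathbb A_i:=\partial_a\Sigma_i(R,0)\colon b\mapsto D^2V_i(R)[b\otimes e_1,\,\cdot\,]e_1$, which are symmetric and, by strong convexity of $V_i$ near $\SO d$, positive definite; the linearization $\mathbb A_i\,\delta a_i=\delta\lambda$, $\sum_i(t_i-t_{i-1})\delta a_i=0$, reduces the invertibility of the Jacobian in $(a,\lambda)$ to the invertibility of $\sum_i(t_i-t_{i-1})\mathbb A_i^{-1}$, which is positive definite. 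The implicit function theorem then yields, for $F$ in a neighbourhood of $\SO d$, a unique small solution $(a_i(F),\lambda(F))$, depending on $F$ as smoothly as $DV_i$ (i.e.\ $C^2$, which suffices for $W_\ho\in C^3$ exactly as in the proof of Theorem~\ref{T:1cell}(c)), with $|a_i(F)|\le C\dist(F,\SO d)$. Hence $\nabla\phi(y,F)=a_F(y_1)\otimes e_1$ is piecewise constant in $y$, so $\phi(\cdot,F)\in W^{1,\infty}(Y)$, and $\dist(F+\nabla\phi(y,F),\SO d)\le(1+C)\dist(F,\SO d)<\delta$ once $\dist(F,\SO d)$ is small enough.

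\emph{Conclusion.} With this estimate the proofs of Theorem~\ref{T:1cell} and Theorem~\ref{T:NC} go through unchanged: on the essential range of $F+\nabla\phi(\cdot,F)$ one has $V(y,F+\nabla\phi)=W(y,F+\nabla\phi)+\mu\det(F+\nabla\phi)$, and periodicity of $\phi$ with $p\ge d$ gives $\fint_Y\det(F+\nabla\phi)=\det F$; combining with (a') yields $W_\ho(F)=W_\ho^{(1)}(F)=\fint_Y W(y,F+\nabla\phi(y,F))\,dy$ together with minimality and uniqueness of $\phi(F)$, i.e.\ Theorem~\ref{T:1cell}(a),(b). Parts (c),(d) and the homogenization-error estimate of Theorem~\ref{T:NC} then follow verbatim from Sections~\ref{sec:1cell}--\ref{sec:hom}, the only difference being that the correctors and their fluxes are now piecewise smooth laminated fields rather than solutions of a smooth PDE (in particular the flux $DW(y,F+\nabla\phi(y,F))$ is bounded and, via $V=W+\mu\det$, elliptic in the integrated sense needed for \eqref{C:d2Whom:claim} and for the linear homogenization step of Remark~\ref{rem:commutability}). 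I expect the main obstacle to be the third step --- setting up the one-dimensional Euler--Lagrange system correctly and verifying the implicit function theorem hypothesis at $F\in\SO d$ --- whose crucial structural inputs are that $DV_i$ coincides on $\SO d$ (so that $a=0$ solves the system there) and that the layerwise acoustic tensors are positive definite.
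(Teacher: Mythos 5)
Your proposal is correct in substance and, in its decisive step, takes a genuinely different route from the paper. Like the paper, you reduce the whole statement to producing, without smoothness of $W$ in $y$, the corrector properties \eqref{ref:convexcorrector2a}--\eqref{ref:convexcorrector2b} for the layered matching convex lower bound, and like the paper you first observe that this corrector is one-dimensional with piecewise constant gradient (you via transverse averaging and Jensen, the paper via translation invariance in $y'$ and uniqueness of the minimizer). The difference lies in how the $L^\infty$-smallness near $\SO d$ is obtained: you reduce the Euler--Lagrange equation to the finite-dimensional system $\Sigma_i(F,a_i)=\lambda$, $\sum_i(t_i-t_{i-1})a_i=0$, note that at $F=R\in\SO d$ it is solved by $a_i=0$, $\lambda=\mu Re_1$ (since $DW_i(R)=0$ and hence $DV_i(R)=\mu\cof R=\mu R$ for every layer), and apply the implicit function theorem, the Jacobian being invertible because the layerwise acoustic tensors $\mathbb A_i$ are positive definite, so that $\sum_i(t_i-t_{i-1})\mathbb A_i^{-1}$ is invertible; strict convexity then identifies this branch with the corrector. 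The paper instead argues variationally: strong convexity of $V$ together with $DV(\cdot,R)=\mu D\det(R)$ yields $\|\phi(F)\|_{H^1(Y)}\lesssim\dist(F,\SO d)$ (its estimate \eqref{eq:correctorlayer1}), which the piecewise-constant structure upgrades via \eqref{eq:correctorlayer} to the $L^\infty$ statement \eqref{est:key}, while the $C^1$-dependence of $\phi(F)$ on $F$ with values in $W^{1,\infty}$ is inherited from Lemma~\ref{L:Dphi} through the finitely many coefficients $c_j(F),d_j(F)$. Both arguments are sound; yours delivers the quantitative bound $|a_i(F)|\lesssim\dist(F,\SO d)$ and the smooth $F$-dependence in one stroke, whereas the paper's is softer and requires no nondegeneracy check.

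One point you pass over too quickly concerns Theorem~\ref{T:NC}. The $W^{2,q}$-regularity of $F\mapsto(\phi(F),\sigma(F))$ available in the smooth case enters Proposition~\ref{P:C} only through the justification of the chain rule \eqref{chainrule} for $x\mapsto\phi(\tfrac{x}{\e},\nabla u_0(x))$ and its analogue for $\sigma$; in the layered case $(y,F)\mapsto\phi(y,F)$ is not jointly $C^1$ across the interfaces $y_1=t_j$, so ``verbatim'' is not quite accurate: the chain rule must be re-verified (the paper devotes a separate step to this, using exactly the structure you establish, namely that $\phi$ is piecewise affine in $y_1$ and $C^1$ in $F$, so the composition is differentiable off the null set where $x_1/\e$ meets an interface), and the flux corrector $\sigma(F)$, which enters the error estimate of Proposition~\ref{P:C} through $D\sigma$ and \eqref{prop:sigma2}, is not discussed in your proposal at all (the paper treats it explicitly in the layered setting, arguing it degenerates there). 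These are short verifications, made easy precisely by the laminate structure you prove, but they belong to the claim about Theorem~\ref{T:NC} and should be spelled out.
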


\begin{remark}
 Consider $W$ satisfying Assumption~\ref{ass:W:layerd}. The example of M\"uller, \cite[Theorem 4.3]{Mueller87}, shows $W_\ho(F_\lambda)<W_\ho^{(1)}(F_\lambda)$, where $F_\lambda=\operatorname{diag}(\lambda,1)$ for some $\lambda<1$. In fact, $\lambda<1$ can be chosen arbitrary close to $1$ provided $\alpha>0$ in Assumption~\ref{ass:W:layerd} is sufficiently small, hence the neighborhood of the rotations where the single-cell formula is valid vanishes when the energy density becomes degenerate. 
\end{remark}

\subsection{Organization of the paper}

In Section~\ref{sec:V}, we give a detailed proof for the construction of the convex lower bound. 
Section~\ref{sec:corrector} is devoted to the (extended) corrector. In particular, we establish the regularity results required in the proofs of our main results. In Section~\ref{sec:1cell}, we prove Theorem~\ref{T:1cell} and in Section~\ref{sec:hom} we provide a proof of the error estimate. Finally, in Section~\ref{sec:layer} we treat the case of layered composites.

\subsection*{Notation.}
\begin{itemize}
 \item $d\geq2$ dimension of the domain,
 \item $f\cdot g$ (resp. $\langle F,G\rangle$) denotes the standard scalar product in $\R^d$ (resp. $\R^{d\times d}$),   
 \item $Y=[0,1)^d$ reference cell of periodicity,
 \item For $r>0$ set $U_r:=\{F\in\R^{d\times d}:\dist(F,\SO d)<r\}$,
 \item For $f:\R^d\times \R^{d\times d}\to\R^n$ with $n\in\N$, we denote
 \begin{itemize}
  \item by $\nabla f$ the Jacobian matrix, i.e. the derivative with respect to the spatial variable: $(\nabla f(x,F))_{ij}=\partial_{x_j}f_i(x,F)=\partial_jf_i(x,F)$.
  \item by  $D^kf$ the $k$-th Fr\'echet-derivative with respect to the second component. If $n=1$, we identify the linear (resp. bilinear) map $Df(\cdot,F)$ (resp. $D^2f(\cdot, F)$) with the matrix (resp. fourth order tensor) defined by $Df(\cdot,F)[G]=\langle Df(\cdot,F),G\rangle$ (resp. $D^2f(\cdot,F)[G,H]=\langle D^2f(\cdot,F)G,H\rangle$).
\end{itemize}
 \item A measurable function $u:\R^d\to\R$ is called $Y$-periodic, if it satisfies $u(y+z)=u(y)$ for almost every $y\in\R^d$ and all $z\in\Z^d$. We consider the function spaces
 \begin{align*}
  L_\per^p(Y)&:=\{u\in L_{\loc}^p(\R^d)\, |\, u\mbox{ is $Y$-periodic}\},\quad L_{\per,0}^p(Y):=\{u\in L_\per^p(Y)\, |\, \int_Y u=0\},\\
  W_\per^{1,p}(Y)&:=\{u\in L_{\per}^p(Y)\, |\, u\in W_\loc^{1,p}(\R^d)\},\quad  W_{\per,0}^{1,p}(Y):=\{u\in W_\per^{1,p}(Y)\, |\, \int_Y u=0\},
 \end{align*}
and likewise $L_\per^p(Y,\R^d)$, $L_{\per,0}^p(Y,\R^d)$ etc.. Throughout the paper, we drop the explicit dependence of the target space whenever it is clear from the context.

 \end{itemize}

\section{Matching convex lower bound} \label{sec:V}
In this section we construct the matching convex lower bound associated with non-convex potentials of class $\mathcal W_\alpha^p$. It turns out to be a strongly convex potential belonging to the following class:
\begin{definition}
  For $\beta>0$ we denote by $\mathcal V_{\beta}$ the set of functions $V\in C^2(\R^{d\times d})$ satisfying
   \begin{align}
   &\beta |F|^2-\frac{1}{\beta}\leq V(F)\leq \frac{1}{\beta}(|F|^2+1),\label{vgrowth}\\
      & |DV(F)[G]|\leq \frac1\beta(1+|F|)|G|,\label{vgrowthd1}\\
   &\beta|G|^2\leq D^2V(F)[G,G]\leq \frac{1}{\beta}|G|^2. \label{vgrowthd2}
 \end{align}
\end{definition}
\begin{remark}\label{R:strong-convexity}
At several places in this paper we appeal to strong convexity of $V\in\mathcal V_{\beta}$ in form of the inequalities
\begin{equation}\label{ineq:strong-conv}
 \forall F,G\in\R^{d\times d}\,:\qquad \beta|F-G|^2\leq \big(DV(F)-DV(G)\big)[F-G],
\end{equation}
and
\begin{equation}\label{ineq:strong-conv2}
 \forall F,G\in\R^{d\times d}\,:\qquad \frac{\beta}{2}|F-G|^2\leq V(G)-(V(F)+DV(F)[G-F]).
\end{equation}
\end{remark}

Next, we state the existence of a matching convex lower bound for all $W\in W_\alpha^p$ with $p\geq d$:
\begin{lemma}\label{L:wv}
 Consider $W\in \mathcal W_{\alpha}^p$ with $p\geq d$. Then there exists $\bar \mu>0$ such that for any $0<\mu\leq\bar\mu$ we can find $\delta,\beta>0$, and a (strongly convex) function $V\in \mathcal V_{\beta}$ such that
  \begin{align}
    W(F)+\mu \det F&\geq V(F)\qquad\mbox{for all $F\in \R^{d\times d}$,}\label{WgeqV0}\\
    W(F)+\mu \det F&= V(F)\qquad \mbox{for all }F\in  U_\delta,\label{W=V0}
  \end{align}
  and
  \begin{equation}\label{V:frame}
    V\in C^3(\R^{d\times d})\quad\mbox{and}\quad V(RF)=V(F)\qquad\text{for all }F\in \R^{d\times d}, R\in\SO d.
  \end{equation}
\end{lemma}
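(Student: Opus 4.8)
The plan is to first regularize $W$ near $\SO d$, then add a multiple of $\det$ to create strong convexity there, and finally extend to a globally strongly convex function by a quadratic-growth interpolation argument.

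\textbf{Step 1: Strong convexity of $W+\mu\det$ near the identity.} Since $W\in\mathcal W_\alpha^p$, condition (W3) gives $W(F)\ge\alpha\dist^2(F,\SO d)$, and (W4) gives $W\in C^3(\overline{U_\alpha})$ with $W(\Id)=0$, $DW(\Id)=0$ (the latter because $\Id$ minimizes). From the frame indifference (W2) and the non-degeneracy, a standard computation (as in \cite{MN11,FT02,CDMK06}) shows that $D^2W(\Id)$ is positive definite on symmetric matrices and vanishes on skew matrices, while $D^2(\det)(\Id)[G,G]=(\operatorname{tr}G)^2-\operatorname{tr}(G^2)$ is positive on nontrivial skew matrices. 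Hence for $\mu>0$ small but fixed, $D^2(W+\mu\det)(\Id)$ is positive definite on all of $\R^{d\times d}$, say $D^2(W+\mu\det)(\Id)[G,G]\ge 4\beta_0|G|^2$. By continuity of $D^2W$ on $\overline{U_\alpha}$ and frame indifference (which propagates the bound from $\Id$ to all of $\SO d$), there is $\delta_0\in(0,\alpha)$ such that $D^2(W+\mu\det)(F)[G,G]\ge 2\beta_0|G|^2$ for all $F\in U_{2\delta_0}$, $G\in\R^{d\times d}$. The upper bound on $D^2(W+\mu\det)$ on $\overline{U_{2\delta_0}}$ follows from (W4) and smoothness of $\det$. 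This determines $\bar\mu$, and for any $0<\mu\le\bar\mu$ we keep the same kind of bounds (with constants depending on $\mu$).

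\textbf{Step 2: Construction of the global convex extension $V$.} Set $\overline W:=W+\mu\det$; it is $C^3$ and strongly convex on $U_{2\delta_0}$, and frame indifferent. The idea is to define $V$ to equal $\overline W$ on $U_\delta$ for some $\delta\in(0,\delta_0)$ and to be a strongly convex function of quadratic growth globally. Concretely, pick a smooth cutoff and set, following the continuum adaptation of \cite{FT02,CDMK06},
\begin{equation*}
 V(F):=\inf_{G\in U_{\delta}}\Big(\overline W(G)+D\overline W(G)[F-G]+\tfrac{\beta_0}{2}|F-G|^2\Big)
\end{equation*}
(an inf-convolution type / lower quadratic envelope). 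This $V$ is automatically $\le\overline W$ on $U_\delta$ by taking $G=F$, is $C^{1,1}$, and is $\beta_0$-strongly convex being an infimum of... — actually one must be careful: to get strong convexity one instead takes $V$ to be the largest $\beta_0$-strongly convex function below $\overline W$ restricted to $U_\delta$ together with a far-field quadratic $\frac{1}{2\beta}|F|^2-\frac1\beta$; equivalently, $V:=\big(\sup\{$quadratics with Hessian $\beta_0\Id$ lying below $\overline W$ on $\overline{U_\delta}\}\big)$, truncated above by a large quadratic. One then checks: (i) on $U_\delta$, strong convexity of $\overline W$ there implies $\overline W$ itself is the supremum of its supporting $\beta_0$-paraboloids, so $V=\overline W$ on $U_{\delta}$, giving \eqref{W=V0}; (ii) $V\le\overline W$ everywhere since every competitor paraboloid lies below $\overline W$ on $\overline{U_\delta}$ and, using the $p\ge d$-growth $\overline W(F)\ge\alpha|F|^p-\frac1\alpha-\mu|\det F|\gtrsim |F|^p$ (here $p\ge d$ ensures $|\det F|\lesssim|F|^d\le|F|^p$ is dominated), also below $\overline W$ outside; this is where $p\ge d$ enters crucially and gives \eqref{WgeqV0}; (iii) $V\in\mathcal V_\beta$ for suitable $\beta$: the lower bound $\beta|F|^2-\frac1\beta\le V$ and $D^2V\ge\beta$ come from the construction, the upper bounds $V\le\frac1\beta(|F|^2+1)$ and $D^2V\le\frac1\beta$ from including a fixed large quadratic as an upper barrier and from $C^{1,1}$-regularity.

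\textbf{Step 3: Regularity and frame indifference of $V$.} Since $\overline W\in C^3(\overline{U_{2\delta_0}})$ and $V=\overline W$ on $U_\delta$, $V$ is $C^3$ there; away from $\overline{U_\delta}$ one arranges $V$ to be a fixed smooth ($C^\infty$) strongly convex quadratic-growth function, and glues the two regimes by a standard smoothing/mollification that preserves strong convexity and the matching on a slightly smaller $U_{\delta'}$ (shrink $\delta$ accordingly). This yields $V\in C^3(\R^{d\times d})$, i.e. \eqref{V:frame}. Frame indifference is inherited because the whole construction is $\SO d$-equivariant: $\overline W$ is frame indifferent, $U_\delta$ is $\SO d$-invariant, the class of paraboloids with Hessian $\beta_0\Id$ is $\SO d$-invariant under $F\mapsto RF$, and the upper barrier quadratic $\frac1{2\beta}|F|^2$ is frame indifferent; hence $V(RF)=V(F)$.

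\textbf{Main obstacle.} The technical heart is Step 2: making the ``largest strongly convex minorant matching $\overline W$ near $\SO d$'' construction rigorous while simultaneously (a) retaining the exact equality $V=\overline W$ on a full neighborhood $U_\delta$ — which forces one to exploit that $\overline W$ is \emph{already} strongly convex there so that its supporting paraboloids recover it — and (b) obtaining the two-sided Hessian bounds \eqref{vgrowthd2} and the quadratic growth \eqref{vgrowth}, \eqref{vgrowthd1} globally, using the growth hypothesis $p\ge d$ (needed precisely so that the Null-Lagrangian term $\mu\det F$, of order $|F|^d$, does not destroy the lower bound $\overline W(F)\gtrsim|F|^p$). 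Reconciling the sharp local matching with the global quadratic behavior, and then smoothing to $C^3$ without losing either property, is the delicate part; I expect the paper handles it by the explicit inf-convolution / Yosida-type regularization of \cite{FT02,CDMK06} adapted to the continuum, which I would follow.
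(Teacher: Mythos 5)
Your Step 1 is fine as far as it goes, but it only establishes that $D^2(W+\mu\det)$ is positive definite on a neighborhood $U_{2\delta_0}$ of $\SO d$. This local information is not the key point. The heart of the matter (Step 1 of the paper's proof, estimate \eqref{est:g:1}) is the much stronger statement that the tangent $\lambda$-paraboloid of $\ol W:=W+\mu\det$ at \emph{any} $F_0\in U_\delta$ lies below $\ol W$ \emph{globally on all of} $\R^{d\times d}$; proving this requires the quantitative determinant inequality \eqref{est:det1} of Conti--Dolzmann--Kirchheim--M\"uller together with the lower bound $W\geq C'(\dist^2(\cdot,\SO d)+\dist^d(\cdot,\SO d))$ (this is exactly where $p\geq d$ enters), plus a near/far case distinction in $|F-F_0|$. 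Your proposal never proves such a global tangent estimate, and your substitute construction in Step 2 does not deliver \eqref{WgeqV0}: if $V$ is defined as the supremum of all quadratics with Hessian $\beta_0\Id$ lying below $\ol W$ merely on $\overline{U_\delta}$, the admissible class contains paraboloids with arbitrarily large linear part (equivalently, vertex arbitrarily far away), and their supremum is $+\infty$ at every point that can be separated from $\overline{U_\delta}$ by a hyperplane -- in particular at points just outside the tube in outward directions and at all matrices of large norm. Hence $V\leq\ol W$ fails outside $U_\delta$. Your appeal to the growth $\ol W(F)\gtrsim|F|^p$ only says something about very large $|F|$ for paraboloids with \emph{bounded} coefficients; it does not constrain this envelope at all, and it also does not handle the intermediate regime where $\ol W$ can be as small as $\sim\alpha\dist^2(F,\SO d)+\mu\det F$ while the linear term $\mu D\det(F_0)[F-F_0]$ of a tangent paraboloid is not small -- controlling that difference is precisely what \eqref{est:det1} is for. (The variant ``truncate above by a large quadratic'' does not repair this: a quadratic lying above $\ol W$ on $U_\delta$ exceeds $\ol W$ at nearby exterior points where $\ol W$ is small, and taking a minimum destroys the lower Hessian bound at the crossing.)

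The correct route, which is the paper's, is to take the supremum only of the \emph{tangent} paraboloids $\ol Q_{F_0}(F)=\ol W(F_0)+D\ol W(F_0)[F-F_0]+\lambda|F-F_0|^2$ with $F_0\in U_\delta$, whose linear parts are uniformly bounded, and to prove \eqref{est:g:1} so that each of them is a global minorant of $\ol W$; the matching on $U_\delta$, strong convexity, frame indifference and quadratic growth then follow, and your Step 3 (fixing the behavior at infinity by a quadratic, mollifying, and gluing with a frame-indifferent cutoff) is essentially the paper's regularization step, so that part of your sketch is sound. The genuine gap is the missing global estimate \eqref{est:g:1}; without it your Step 2 does not produce a function satisfying \eqref{WgeqV0} together with \eqref{W=V0}.
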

A direct consequence of Lemma~\ref{L:wv} and Assumption~\eqref{ass:W:1} (i) is the following:
\begin{corollary}\label{C:wv}
  Suppose that Assumption~\eqref{ass:W:1} (i) is satisfied. Then there exist $\delta,\mu,\beta>0$, and a Caratheodory function $V:\R^d\times\R^{d\times d}\to \R$ which is $Y$-periodic in its first variable and $V(y,\cdot)\in \mathcal V_{\beta}$ satisfies \eqref{V:frame} for almost every $y\in Y$ such that
\begin{align}
 W(y,F)+\mu \det F&\geq V (y,F)\qquad\mbox{for almost every $y\in Y$ and all $F\in \R^{d\times d}$,}\label{WgeqV}\\
 W(y,F)+\mu \det F&=V (y,F)\qquad \mbox{for almost every $y\in Y$ and all }F\in  U_\delta.\label{W=V}
\end{align}
\end{corollary}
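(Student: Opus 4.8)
The plan is to apply Lemma~\ref{L:wv} pointwise in $y$ and check that the constants and the function can be chosen measurably. First I would observe that Assumption~\ref{ass:W:1}~(i) guarantees $W(y,\cdot)\in\mathcal W_\alpha^p$ for a.e.\ $y\in Y$, with the \emph{same} constant $\alpha$ and the \emph{same} exponent $p\geq d$ for all such $y$. Inspecting the statement of Lemma~\ref{L:wv}, the threshold $\bar\mu$ and the resulting triple $(\delta,\beta)$ depend only on the structural data $\alpha,p,d$ entering the definition of $\mathcal W_\alpha^p$ (the growth constant, the non-degeneracy constant, and the $C^3$-bound on $U_\alpha$), and not on finer features of the particular $W$; hence a single choice of $\mu\in(0,\bar\mu]$ and of $\delta,\beta>0$ works simultaneously for a.e.\ $y$. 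Fixing such $\mu,\delta,\beta$, for a.e.\ $y\in Y$ we obtain a strongly convex $V(y,\cdot)\in\mathcal V_\beta$ satisfying \eqref{V:frame} and the matching relations \eqref{WgeqV0}--\eqref{W=V0} with $W$ replaced by $W(y,\cdot)$; extending $y\mapsto V(y,\cdot)$ $Y$-periodically yields the desired inequalities \eqref{WgeqV} and \eqref{W=V}.

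Next I would address measurability, i.e.\ that the map $y\mapsto V(y,\cdot)$ can be chosen so that $V$ is a Carath\'eodory function. The cleanest route is to revisit the construction of $V$ in the proof of Lemma~\ref{L:wv}: $V$ is built from $W$ (and the fixed Null-Lagrangian $\mu\det$) by explicit operations — adding $\mu\det$, truncating/convexifying away from $U_\delta$, and possibly mollifying — each of which is a Borel-measurable operation in the $W$-argument. Concretely, if in the proof $V(y,F)$ is obtained as, say, the supremum over a fixed countable family of affine (or quadratic) minorants of $F\mapsto W(y,F)+\mu\det F$ chosen to coincide with it on $U_\delta$, then for each fixed $F$ the map $y\mapsto V(y,F)$ is a countable supremum of measurable functions, hence measurable, while for a.e.\ fixed $y$ the map $F\mapsto V(y,F)$ is continuous (indeed $C^2$, by membership in $\mathcal V_\beta$). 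This is exactly the Carath\'eodory property, and together with $Y$-periodicity in $y$ it gives the claimed regularity of $V$.

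The main obstacle I anticipate is not the pointwise application of Lemma~\ref{L:wv} — that is essentially immediate — but the \emph{uniformity and measurability} bookkeeping: one must make sure that the proof of Lemma~\ref{L:wv} really produces $\bar\mu,\delta,\beta$ depending only on $(\alpha,p,d)$ (so that no $y$-dependent choice of these constants is forced), and that the construction of $V$ can be carried out by a ``formula'' in $W$ that manifestly preserves measurability in $y$ and the frame-indifference/smoothness in $F$ recorded in \eqref{V:frame}. Once the proof of Lemma~\ref{L:wv} is written in a sufficiently explicit and $W$-uniform way, the corollary follows by specializing to $W=W(y,\cdot)$ for a.e.\ $y$ and invoking standard measurable-selection/Carath\'eodory arguments; I would only sketch this and refer back to the explicit construction in Section~\ref{sec:V}.
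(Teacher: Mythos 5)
Your proposal is correct and follows the same route the paper intends: the paper derives Corollary~\ref{C:wv} directly from Lemma~\ref{L:wv} applied to $W(y,\cdot)$ for a.e.\ $y$, leaving the short argument (uniformity of $\mu,\delta,\beta$ in $y$ because they depend only on $d,p,\alpha$, plus measurability of $y\mapsto V(y,F)$ from the explicit sup/mollification/gluing construction) to the reader, which is exactly the bookkeeping you spell out.
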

The rest of this section is devoted to the proof of Lemma~\ref{L:wv}. The short argument for Corollary~\ref{C:wv} is left to the reader.

\begin{proof}[Proof of Lemma~\ref{L:wv}]
The proof relies on a convexification of the modified potential
$$\ol W(F):=W(F)+\mu\det F\qquad (\mu>0),$$
which (in contrast to $W$) is locally strongly convex at $\SO d$ (i.e. $\ol W$ restricted to a small ball around a rotation is strongly convex). In Step~1 and Step~2, we present the convexification argument, following the argument in \cite[Theorem 4.2]{CDMK06}, where the analogous result is obtained in a discrete setting. Since the discrete case contains additional technicalities, we present the continuum argument for the reader's convenience. The convex potential obtained in Step~2 already satisfies all the claimed properties, except for the asserted smoothness properties. The latter is obtained in  Step 3 by a careful regularization.

\step{1 - Local strong convexity of $\ol W$}%
We claim that for all $0<\mu\ll 1$ (where here and below $\ll$ means $\leq$ up to constant only depending on $d$, $p$ and $\alpha$ from \eqref{ass:onewell}) there exists  $\delta=\delta(d,\alpha,\mu)\in(0,1)$ and constant $C=C(d)>0$ such that
\begin{align}\label{est:g:1}
  &\ol W(F)-\ol W(F_0)- D \ol W(F_0)[F-F_0]\geq \frac{\mu}{C} |F-F_0|^2\\\notag
  &\mbox{for all $F,F_0\in\R^{d\times d}$ with $\dist(F_0,\SO d)\leq \delta$.}
\end{align}
The argument is divided in three steps.

\substep{1.1 -  Strict Legendre ellipticity close to $\SO d$}

We claim that there exists a constant $C=C(d)$  such that whenever $0\leq\mu\leq \frac{\alpha}{C}$ we have for some $\delta'=\delta'(d,\alpha,\mu)>0$:
\begin{equation*}
  D^2\ol W(F_0)[G,G]\geq\frac{\mu}{2C}|G|^2\qquad\text{for all }G,F_0\in\R^{d\times d}\text{ with }\dist(F_0,\SO d)\leq 2\delta'.
\end{equation*}
Since the quadratic forms $D^2\det$ and $D^2W$ are continuous in a neighborhood of $\SO d$, see \eqref{ass:wd2lip}, it suffices to prove 
\begin{equation}\label{est:ellbarW}
  D^2\ol W(R)[G,G]\geq\frac{\mu}{C}|G|^2\qquad\text{for all }G\in\R^{d\times d}\text{ and }R\in\SO d.
\end{equation}
By frame-indifference we may assume without loss of generality that $R=I$. The argument relies on the following estimate which is proven in \cite[Proof of Theorem~4.2, Step~1]{CDMK06}
\begin{align}\label{est:det1}
\forall F,F'\in\R^{d\times d}\,:\qquad
  &\det F-\det F'- D\det(F')[F-F']\geq \frac{1}{C}|F-F'|^2\notag\\
  &\hspace{3cm}-C(\dist^2(F,\SO d)+\dist^d(F,\SO d))\notag\\
  &\hspace{3cm}-C(\dist^2(F',\SO d)+\dist^d(F',\SO d)),
\end{align}
where  $C=C(d)$. By linearization at identity we obtain $D^2 \det(I)[G,G] \geq \frac{1}{C}|G|^2-C|\sym G|^2$.
Thanks to the non-degeneracy of $W$, see \eqref{ass:onewell}, we have $D^2 W(I)[G,G]\geq \alpha|\sym G|^2$, and thus \eqref{est:ellbarW} holds provided $\mu\leq \frac{\alpha}{C}$.

\substep{1.2 - Proof of \eqref{est:g:1}}%

Let $\mu$, $C$ and $\delta'$ be as above, and suppose that $\dist(F_0,\SO d)<\delta$ where $0<\delta<\delta'$ will be fixed below. By frame-indifference we may assume without loss of generality that $\dist(F_0,\SO d)=|F_0-I|$. If $|F-F_0|<\delta'$, then a Taylor expansion yields
\begin{equation*}
  [\text{LHS of }\eqref{est:g:1}]= \int_0^1(1-s)D^2\ol W(F_0+s(F-F_0))[F-F_0,F-F_0]\,ds.
\end{equation*}
Since $|F_0+s(F-F_0)-I|<2\delta'$ we may apply Substep 1.1 and get
\begin{equation*}
  [\text{LHS of }\eqref{est:g:1}]\geq \frac{\mu}{4C}|F-F_0|^2.
\end{equation*}
If $|F-F_0|\geq \delta'$ we proceed as follows:
Due to the growth condition \eqref{ass:pgrowth} (where $p\geq d$) and \eqref{ass:onewell}, we can find $C'=C'(d,\alpha,p)>0$ such that
\begin{equation*}
  W(F)\geq C'\left(\dist^2(F,\SO d)+\dist^d(F,\SO d)\right).
\end{equation*}
Combined with \eqref{est:det1} we get for $0<\mu\leq \min\{\frac{\alpha}{C},\frac{C'}{C}\}$:
\begin{eqnarray}\notag
  [\text{LHS of }\eqref{est:g:1}]
  &=&\ol W(F)-\mu\big(\det(F_0)+ D\det (F_0) [F-F_0]\big)\\\notag
  &&-\left(W(F_0)+DW(F_0)[F-F_0]\right)\\\notag
  &\geq& \frac{\mu}{C}|F-F_0|^2+(C'-\mu C)\left(\dist^2(F,\SO d)+\dist^d(F,\SO d)\right)\\\notag
  &&-2\mu C\delta^2-\left(W(F_0)+DW(F_0)[F-F_0]\right)\\\notag
  &\geq& \frac{\mu}{4C}|F-F_0|^2+\left(\frac{3\mu}{4C}\delta'^2-2\mu C\delta^2-\left(W(F_0)+DW(F_0)[F-F_0]\right)\right),
\end{eqnarray}
where we use that $|F-F_0|^2\geq \frac14|F-F_0|^2+\frac34{\delta'}^2$, $C'-\mu C\geq 0$, and $\dist^2(F_0,\SO d)+\dist^d(F_0,\SO d)\leq 2\delta^2$. We finally conclude by arguing that we may choose $\delta=\delta(d,\alpha,\mu,\delta')$ sufficiently small such that the second term on the right-hand side is non-negative. This is indeed possible, since
\begin{eqnarray*}
  |W(F_0)|&\leq&|\int_0^1(1-s)D^2W(I+s(F_0-I))[F_0-I,F_0-I]|\leq C''\delta^2,\\
  |DW(F_0)[F-F_0]|&\leq&|\int_0^1D^2W(I+s(F_0-I))[F-F_0,F_0-I]|\leq C''|F-F_0|\delta\leq C''\delta'\delta,
\end{eqnarray*}
for $C''=C''(d,\alpha)$, as follows from a  Taylor expansion and the fact that $W$ is minimized at identity.

\step{2 -- Convexification}

For the rest of the proof we fix $C,\delta,\mu>0$ according to Step~1. For notational convenience, we set $\lambda:=\frac{\mu}{2C}$ and write $\lesssim$ if the relation holds up to a constant only depending on $d,\alpha,\mu,\delta$ and $C$. We convexify $\ol W$ as follows: For $F_0\in U_{\delta}$ we define a quadratic form $\ol Q_{F_0}(\cdot)$ as
\begin{equation*}
  \ol Q_{F_0}(F):=\ol W(F_0)+D\ol W(F_0)[F-F_0]+\lambda|F-F_0|^2,
\end{equation*}
and a potential $\ol V(\cdot)$ by
\begin{equation*}
  \ol V(F):=\sup_{F_0\in U_\delta}\ol Q_{F_0}(F).
\end{equation*}
We note that:
\begin{enumerate}[(a)]
\item $\ol V$ is strongly $2\lambda$-convex, that is
\begin{equation}\label{est:strong-conv}
  \ol V(\cdot)-\lambda|\cdot|^2\text{ is convex.}
\end{equation}
Indeed, by construction the quadratic potentials $\ol Q_{F_0}$ are strongly $2\lambda$-convex, and thus this property is inherited by the supremum.
\item $\ol V$ is a lower bound for $\ol W$ in the sense that
\begin{equation}
\label{est:comparison}
  \begin{aligned}
    \forall F\in\R^{d\times d}\,:&\qquad \ol W(F)\geq\ol V(F)+\lambda\dist^2(F,U_\delta),\\
    \forall F\in U_\delta\,:&\qquad \ol W(F)=\ol V(F).
  \end{aligned}
\end{equation}
Indeed, the inequality is a direct consequence of \eqref{est:g:1} (recall that $\lambda=\frac{\mu}{2C}$), and the identity holds, since $\ol Q_F(F)=\ol W(F)$ for $F\in U_\delta$ by construction.
\item $\ol V$ is frame-indifferent and satisfies the quadratic growth conditions:
\begin{equation}\label{est:growth}
  \lambda|F|^2\leq \ol V(F)-\min\ol V\leq \tfrac43\lambda|F|^2+C'.
\end{equation}
for some constant $C'\lesssim 1$. We first notice that $\min\ol V=\ol V(0)$. Indeed, by strong convexity $\ol V$ has a unique minimizer, say $F_*$. The frame-indifference of $\ol W$ implies that $\ol V$ is frame-indifferent, and thus $F_*=RF_*$ for all $R\in\SO d$. This implies $F_*=0$. The lower bound is a direct consequence of $\min\ol V=\ol V(0)$ and \eqref{est:strong-conv}. Moreover, we have for all $F_0\in U_\delta$ 
\begin{equation*}
  \ol Q_{F_0}(F)-\min\ol V=  \ol Q_{F_0}(F)-\ol V(0)\leq \ol Q_{F_0}(F)-\ol Q_{F_0}(0)=D\ol W(F_0)[F]+\lambda(|F|^2-2F\cdot F_0),
\end{equation*}
and thus the claimed upper bound follows by Young's inequality.
\end{enumerate}
Let us remark that up to this stage we basically adapted the argument of \cite[Theorem 4.2]{CDMK06} to the continuum setting and the constructed potential $\ol V$ satisfies all the sought after properties except for the $C^3$-regularity. In order to obtain the latter we carefully regularize $\ol V$ in the next step.

\step{3 -- Regularization}

We first claim that there exists a positive constant $0\leq C_0\lesssim 1$ and a radius $R\lesssim 1$ with $U_\delta\subset B_R:=\{F\in\R^{d\times d}\,|\,|F|\leq R\}$, such that
\begin{equation*}
  \widehat V(F):=\max\{\ol V(F),Q(F)\}\qquad\text{ with }Q(F):=\tfrac32\lambda |F|^2+\min \ol V-C_0,
\end{equation*}
satisfies 
\begin{equation}
\label{est:comparison2}
  \begin{aligned}
    \widehat V(F)\leq&\ol W(F)\qquad&&\text{for all }F,\\
    \widehat V(F)=&Q(F)\qquad&&\text{for all }|F|\geq R.
  \end{aligned}
\end{equation}
Indeed, for the inequality note that the lower bounds in \eqref{est:comparison} and \eqref{est:growth} yield
\begin{eqnarray*}
  Q(F)-\ol W(F)&\leq&   Q(F)-(\ol V(F)+\lambda\dist^2(F,U_\delta))\\
  &\leq& \lambda (\tfrac12|F|^2-\dist^2(F,U_\delta))-C_0,
\end{eqnarray*}
and thus the inequality in \eqref{est:comparison2} follows, since $Q(F)\leq \ol W(F)$ for all $F\in\R^{d\times d}$ provided  $C_0\lesssim 1$ is sufficiently large. The identity in \eqref{est:comparison2} follows, since $Q(F)-\ol V(F)\geq \frac{\lambda}{6}|F|^2-C_0-C'\geq0$ for $F$ sufficiently large.
We note that $\widehat V$ inherits the following properties from $\ol V$ and $W$:
\begin{enumerate}[(a)]
\item $\widehat V$ is strongly $2\lambda$-convex.
\item $\widehat V=\ol W$ on $U_\delta$; (indeed, this follows from the identities in \eqref{est:comparison2} and \eqref{est:comparison}).
\item $\widehat V$ is frame-indifferent and satisfies the growth condition
  \begin{equation*}
    \lambda|F|^2-C'\leq \widehat V(F)\leq 2\lambda|F|^2+C'
  \end{equation*}
  for a suitable constant $C'\lesssim 1$.
\item $\widehat V$ is $C^3$ in $U_{\text{good}}:=U_\delta\cup\{F\,:\,|F|\geq 2R\}$ with
  \begin{equation*}
    \|\widehat V\|_{C^3(U_{\text{good}})}\lesssim 1;
  \end{equation*}
  indeed, for $F\in U_\delta$, this follows from the regularity of $\ol W$ and (b), while for $|F|\geq 2R$ we deduce that $\widehat V$ is a quadratic function with $D^2\hat V[G,G]=D^2 Q[G,G]=3\lambda|G|^2$.
\end{enumerate}
Finally, we conclude by appealing to a regularization and gluing construction, see \cite{Ghomi} for a similar argument. We regularize $\widehat V$ via convolution with a compactly supported, frame-indifferent standard mollifier $\eta_\e=\e^{-d}\eta(\frac{\cdot}{\e})$ on $\R^{d\times d}$ $(\e>0)$, say
\begin{equation*}
  \eta(F):=\frac{1}{Z}
  \begin{cases}
    \exp(-\frac{1}{1-|F|^2})&\text{for }|F|\leq 1\\
    0&\text{else,}
  \end{cases}
\end{equation*}
where $Z>0$ is a normalizing constant.
 Since strong convexity is stable under convolution (with a positive function), the regularized potential $\widehat V_\e:=\widehat V*\eta_\e$ is smooth and strongly $2\lambda$-convex. In order to get the desired compatibility properties \eqref{WgeqV0} and \eqref{W=V0} (which are satisfied by $\widehat V$ but not necessarily by $\widehat V_\e$), we want to glue $\widehat V_\e$ and $\widehat V$ together, such that the resulting function $V$ is identical to $\widehat V$ in $U_{\delta/2}$ and differs from $\widehat V$ in $\{|F|\geq 3R\}$ only by a constant. To that end set $U_{\text{good}}':=U_{\delta/2}\cup\{F\,:\,|F|\geq 3R\}\Subset U_{\text{good}}$ and let $\rho$ denote a smooth cut-off function for $U_{\text{good}}'$ in $U_{\text{good}}$ (i.e. $0\leq \rho\leq 1$, $\rho=1$ on $U'_{\text{good}}$ and $\rho=0$ in $\R^{d\times d}\setminus U_{\text{good}}$). Since the sets $U_{\text{good}}$ and $U_{\text{good}}'$ are rotationally invariant, we may additionally assume that $\rho$ is frame-indifferent. Define
\begin{equation*}
  V(F):=\widehat V(F)+(1-\rho(F))(\widehat V_\e(F)-\widehat V(F)-C_\e),
\end{equation*}
where $C_\e:=\sup\{\widehat V_\e(F)-\widehat V(F)\,:\, F\in B_{3R}\,\}$. Then:
\begin{enumerate}[(a)]
\item[(e)] $V$ satisfies \eqref{WgeqV0} and \eqref{W=V0} (with $\delta$ replaced by $\delta/2$). Indeed, the latter follows from (b), since $V=\widehat V$ on $U_{\delta/2}$ thanks to the choice of the cut-off function $\rho$. The inequality \eqref{WgeqV0} follows from the combination of (b) and the identity $V=\widehat V$ on $\R^{d\times d}\setminus B_{3R}$ (which is true by the definition of the cut-off $\rho$), and the inequality $V\leq \widehat V$ on $B_{3R}$ (which is true thanks to the definition of $V$ and $C_\e$).

\item[(f)] For $\e=\e(d,\alpha,\mu,\delta)>0$ sufficiently small, the potential $V$ is strongly $\lambda$-convex. \\For the argument first note that the $2\lambda$-convexity of $Q$ and $\widehat V$, yields for $F$ with $\rho(F)\in\{0,1\}$:
  \begin{equation*}
    D^2V(F)[G,G]=
    \left.\begin{cases}
      D^2\widehat V(F)[G,G]&\text{if }\rho(F)=1\\
      D^2Q V(F)[G,G]&\text{if }\rho(F)=0\\
    \end{cases}\right\}\geq \lambda|G|^2.
  \end{equation*}
  In the other case, i.e. $\rho(F)\not\in\{0,1\}$, the claim follows, since $V$ (as a $C^2$ function) becomes arbitrarily close to either $\widehat V$ or $Q$, if $\e\ll 1$. For the precise argument consider
  \begin{equation*}
    H_\e(F):=V(F)-\widehat V(F)=(1-\rho(F))(\widehat V_\e(F)-\widehat V(F)-C_\e),
  \end{equation*}
  and note that $\rho(F)\not\in\{0,1\}$ implies  $F\in U'':=U_{\text{good}}\setminus U'_{\text{good}}$. The set $U''$ consists of two connected components, namely, $U''\cap U_\delta$ and $U''\setminus B_{2R}$. Since $\eta$ has compact support, we have (for $\e>0$ sufficiently small)
  \begin{equation*}
    H_\e=
    \begin{cases}
      (1-\rho)((\ol V*\eta_\e)-\ol V-C_\e)&\text{on }U''\cap U_{\delta},\\
      (1-\rho)((Q*\eta_\e)-Q-C_\e)&\text{on }U''\setminus B_{2R}.
    \end{cases}
  \end{equation*}
  We conclude in the case $U''\cap U_\delta$ (the argument in the case $U''\setminus B_{2R}$ is similar and left to the reader). From $\ol V=\ol W$ in $U_\delta\Supset U''\cap U_\delta$ we deduce that $\ol V$ is $C^3$ and thus
  \begin{eqnarray*}
    \limsup\limits_{\e\downarrow 0}\|(1-\rho)((\ol V*\eta_\e)-\ol V-C_\e)\|_{C^2(U''\cap U_{\delta})}=0.
  \end{eqnarray*}
  Hence, for $\e>0$ sufficiently small we get
  \begin{equation*}
    D^2V(F)[G,G]=D^2H(F)[G,G]+D^2\ol V(F)[G,G]\geq \frac{1}{2}D^2\ol V(F)[G,G]\geq \frac{\lambda}{2}|G|^2,
  \end{equation*}
  where the last inequality holds thanks to the $\lambda$-convexity of $\ol V$. 
\item[(g)] We finally note that $V\in \mathcal V_\beta$ for some $\beta>0$ that we eventually may choose (sufficiently small) only depending on $d,\alpha,\mu$. (Note that the quadratic growth condition is inherited from $\ol V$ and $Q$). Moreover $V$ is frame-indifferent, since $\ol V$, $Q$, $\rho$ and $\eta$ are frame-indifferent.
\end{enumerate}

\end{proof}

\section{Extended Corrector}\label{sec:corrector}
In this section, we introduce and establish regularity properties of the extended corrector $(\phi,\sigma)$ associated with a convex potential $V$. We first discuss the  definition of the extended corrector $(\phi,\sigma)$, and energy based regularity properties of $(\phi,\sigma)$  and the map $F\mapsto (\phi(F),\sigma(F))$. Secondly, we establish $W^{2,q}$-regularity (with $q>d$) based on non-variational methods (the implicit function theorem) and additional structural and smoothness assumptions imposed on $V$. Finally, we consider the special case when $V$ is given by the matching convex lower bound associated with the non-convex potential $W$. Let us anticipate that in the next section, in the proof of Theorem~\ref{T:1cell}, we shall see that the corrector $\phi(F)$ associated with the matching convex lower bound $V$ coincides with the corrector in the single-cell representation of $W_\ho(F)$ (for $F$ close to $\SO d$).
\medskip

For the above purpose it suffices to consider periodic convex potentials of the following type:
\begin{assumption}\label{ass:V}
 Fix $\beta>0$. We suppose that $V:\R^d\times\R^{d\times d}\to\R$ is a Caratheodory function that is $Y$-periodic in its first variable and $V(y,\cdot)\in \mathcal V_\beta$ for almost every $y\in Y$.
\end{assumption}
To a potential $V$ of class $\mathcal V_\beta$ and a direction $F\in \R^{d\times d}$ we associate an extended corrector $(\phi(F),\sigma(F))$ and a flux $J(F)$ according to the following lemma:
\begin{lemma}\label{L:phi}
  Suppose Assumption~\ref{ass:V} is satisfied. For every $F\in \R^{d\times d}$ there exist $\phi(F)\in H_{\per,0}^1(Y,\R^d)$, $\sigma(F)\in H_{\per,0}^1(Y,\R^{d\times d\times d})$ and $J(F)\in L_{\per,0}^2(Y,\R^{d\times d})$ that are uniquely defined by the following properties: 
  \begin{itemize}
  \item (Corrector). $\phi(F)$ satisfies
    \begin{equation}\label{eq:convexcorrector}
      \int_YV(y,F+\nabla \phi(y,F))\,dy= \min_{\phi\in H_{\per}^1}\int_YV(y,F+\nabla \phi(y))\,dy.
    \end{equation}
  \item (Flux). $J(F)$ satisfies
    \begin{equation}\label{def:J}
      J(F):=D V(\cdot,F+\nabla \phi(F))-\int_YDV(y,F+\nabla \phi(y,F))\,dy.
    \end{equation}
  \item (Flux-corrector). $\sigma(F)$ satisfies
    \begin{align}
      -\Delta \sigma_{ijk}(F) =& \partial_k J(F)_{ij}-\partial_j J(F)_{ik} \label{eq:sigma}\\
      \sigma_{ijk}(F)=&-\sigma_{ikj}(F)\label{prop:sigma1}\\
      -\partial_k \sigma_{ijk}(F)=&J(F)_{ij},\label{prop:sigma2}
    \end{align}
    where we use Einsteins summation convention in \eqref{prop:sigma2}.
  \end{itemize}
\end{lemma}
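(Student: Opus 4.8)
The plan is to establish existence and uniqueness of $\phi(F)$, $J(F)$, and $\sigma(F)$ in three successive steps, treating the flux-corrector $\sigma$ last since it is simply obtained by solving a constant-coefficient (Laplacian) periodic system once $J$ is known.

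\textbf{Step 1: the corrector $\phi(F)$.} First I would solve the minimization problem \eqref{eq:convexcorrector} by the direct method in $H^1_{\per,0}(Y,\R^d)$. The functional $\phi\mapsto \int_Y V(y,F+\nabla\phi)\,dy$ is well-defined and finite on $H^1_{\per,0}$ thanks to the upper bound in \eqref{vgrowth}; it is coercive on $H^1_{\per,0}$ by the lower bound $\beta|F+\nabla\phi|^2-\tfrac1\beta \le V(y,F+\nabla\phi)$ together with the Poincar\'e inequality on $Y$ for mean-zero periodic functions (note $\int_Y \nabla\phi=0$, so $\|\nabla\phi\|_{L^2}^2 = \|F+\nabla\phi\|_{L^2}^2 - |F|^2$); and it is weakly lower semicontinuous since $F\mapsto V(y,F)$ is convex (indeed strongly convex) and continuous. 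Hence a minimizer exists. Uniqueness follows from strong convexity \eqref{vgrowthd2}: if $\phi_1,\phi_2$ are both minimizers, then by \eqref{ineq:strong-conv2} the functional is strongly convex along the segment joining them, so $\nabla\phi_1=\nabla\phi_2$ a.e., and since both have zero mean, $\phi_1=\phi_2$. I would also record the Euler--Lagrange equation: testing with $\psi\in H^1_{\per}(Y,\R^d)$ gives $\int_Y DV(y,F+\nabla\phi(y,F))[\nabla\psi]\,dy=0$, which is legitimate because $|DV(y,G)|\le \tfrac1\beta(1+|G|)$ by \eqref{vgrowthd1} makes the first variation finite.

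\textbf{Step 2: the flux $J(F)$.} This is a definition, \eqref{def:J}, so existence is immediate; I only need to check $J(F)\in L^2_{\per,0}(Y,\R^{d\times d})$. Periodicity and square-integrability follow from $DV(\cdot,F+\nabla\phi(F))\in L^2$ (again by \eqref{vgrowthd1} and $\nabla\phi(F)\in L^2$), and the mean-zero property holds by construction since we subtract the average. Uniqueness is trivial since $\phi(F)$ is unique. The Euler--Lagrange equation from Step~1 can be restated as $\int_Y J(F)[\nabla\psi]\,dy=0$ for all periodic $\psi$, i.e.\ $\divv J(F)=0$ in the distributional periodic sense; I would note this as it is what makes the flux-corrector equations \eqref{eq:sigma}--\eqref{prop:sigma2} consistent.

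\textbf{Step 3: the flux-corrector $\sigma(F)$.} For each triple $(i,j,k)$ solve the periodic Poisson equation \eqref{eq:sigma} with right-hand side $\partial_k J(F)_{ij}-\partial_j J(F)_{ik}\in H^{-1}_{\per}(Y)$, which has zero mean; this yields a unique $\sigma_{ijk}(F)\in H^1_{\per,0}(Y)$ by Lax--Milgram. The skew-symmetry \eqref{prop:sigma1} can either be imposed by construction or, cleanly, deduced from uniqueness: the right-hand side of \eqref{eq:sigma} is antisymmetric under swapping $j\leftrightarrow k$, so $\sigma_{ijk}$ and $-\sigma_{ikj}$ solve the same problem. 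For \eqref{prop:sigma2}, the standard argument: set $T_{ij}:=-\partial_k\sigma_{ijk}(F)-J(F)_{ij}$; using \eqref{eq:sigma} and $\divv J(F)=0$ one computes $-\Delta T_{ij}= -\partial_k(\partial_k J_{ij}-\partial_j J_{ik}) - \partial_k(-\Delta)^{-1}(\dots)\cdot$ — more precisely, $\Delta(\partial_k\sigma_{ijk}) = \partial_k(\Delta\sigma_{ijk}) = -\partial_k(\partial_k J_{ij}-\partial_j J_{ik}) = -\Delta J_{ij} + \partial_j(\partial_k J_{ik}) = -\Delta J_{ij}$, using $\divv J(F)=0$ in the last equality. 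Hence $\Delta T_{ij}=0$; since $T_{ij}$ is periodic it is constant, and since $J(F)$ has zero mean while $\partial_k\sigma_{ijk}(F)$ has zero mean (a periodic derivative), $T_{ij}$ has zero mean, so $T_{ij}=0$. This gives \eqref{prop:sigma2}, and uniqueness of $\sigma(F)$ among mean-zero periodic fields is again Lax--Milgram.

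\textbf{Main obstacle.} The only genuinely delicate point is the regularity needed to make sense of \eqref{eq:sigma}--\eqref{prop:sigma2} — in particular, verifying that the divergence-free condition $\divv J(F)=0$ derived from the Euler--Lagrange equation is exactly the compatibility condition that forces \eqref{prop:sigma2}, and keeping careful track that all quantities (derivatives of $\sigma$, etc.) genuinely have zero mean so that the Liouville-type argument ``periodic + harmonic $\Rightarrow$ constant $\Rightarrow$ zero'' applies. The existence/uniqueness of $\phi$ is routine convex calculus of variations; the construction of $\sigma$ is the standard flux-corrector construction from quantitative homogenization and presents no real difficulty once $J$ is in hand.
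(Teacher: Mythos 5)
Your proposal is correct and follows essentially the same route as the paper: direct method plus strong convexity of $V(y,\cdot)$ for existence and uniqueness of $\phi(F)$ and its Euler--Lagrange equation, the trivial $L^2$ bound \eqref{vgrowthd1} for $J(F)$, and the standard flux-corrector construction for $\sigma(F)$, where \eqref{prop:sigma2} is obtained exactly as in the paper by showing $\partial_k\sigma_{ijk}(F)+J(F)_{ij}$ is a periodic, mean-zero harmonic distribution (Weyl's lemma) and hence vanishes. The only cosmetic difference is that you carry out the harmonicity computation by direct distributional manipulation using $\divv J(F)=0$, whereas the paper tests against $\Delta\eta$, which amounts to the same argument.
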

The argument for Lemma~\ref{L:phi} is rather standard. In particular, in the case of linear equations it is classical, e.g. see \cite[Section 7.2]{JKO-94}. For the reader's convenience we present the short proof below. Next, we present several regularity properties of the map $F\mapsto (\phi(F),\sigma(F))$. In order to shorten the notation, we introduce
\begin{equation}\label{def:L}
  \hat{\mathbb L}_{F}(x)=D^2 V(x,F+\nabla \phi(x,F)).
\end{equation}
Based on variational regularity methods, we obtain $H^1$-differentiability of the map $F\mapsto (\phi(F),\sigma(F))$:

\begin{lemma}\label{L:Dphi}
 Suppose Assumption~\ref{ass:V} is satisfied. The map $$F\mapsto (\phi(F),\sigma(F),J(F))$$ is of class $C^1(\R^{d\times d}, H_\per^1(Y)\times H_\per^1(Y)\times L_\per^2(Y))$. Moreover, for every $F,G\in\R^{d\times d}$ we have:
\begin{itemize}
 \item (Corrector). $D\phi(F)[G]=\partial_G\phi(F)$, where $\partial_G\phi(F)$ denotes the unique solution $\psi\in H_{\per,0}^1(Y)$ of
\begin{equation}\label{eq:dphi1}
 \divv( \hat{\mathbb L}_F(G+\nabla \psi)=0.
\end{equation}
\item (Flux).
\begin{equation}\label{D:J}
 DJ(F)[G] = \hat {\mathbb L}_F(G+\nabla \partial_G\phi(F)) - \int_Y \hat {\mathbb L}_F(G+\nabla \partial_G\phi(F)). 
\end{equation}
\item (Flux-corrector). $ D\sigma_{ijk}(F)[G]=\partial_G\sigma_{ijk}(F)$, where $\partial_G\sigma_{ijk}(F)$ denotes the unique solution in $\psi\in H_{\per,0}^1(Y,\R)$ of
\begin{equation}\label{eq:dgsigma:1}
 -\Delta \psi = \partial_k  (DJ(F)[G])_{ij}-\partial_j (DJ(F)[G])_{ik}. 
\end{equation}
\end{itemize}
\end{lemma}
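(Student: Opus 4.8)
\textbf{Proof proposal for Lemma~\ref{L:Dphi}.}

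The plan is to establish the $C^1$-regularity of $F\mapsto(\phi(F),\sigma(F),J(F))$ by a combination of the implicit function theorem (or, equivalently, a direct difference-quotient argument exploiting strong convexity) applied to the Euler--Lagrange equation for $\phi$, followed by the observation that $\sigma$ and $J$ depend on $F$ only through $\phi(F)$ in a manifestly smooth way. The three formulas for the derivatives will then follow by differentiating the defining relations.

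First I would set up the corrector map. The minimizer $\phi(F)$ in \eqref{eq:convexcorrector} is characterized by its Euler--Lagrange equation
\begin{equation*}
 \int_Y DV(y,F+\nabla\phi(y,F))\cdot\nabla\eta\,dy=0\qquad\text{for all }\eta\in H^1_{\per}(Y,\R^d).
\end{equation*}
Define $\mathcal{F}:\R^{d\times d}\times H^1_{\per,0}(Y,\R^d)\to (H^1_{\per,0}(Y,\R^d))^*$ by $\langle\mathcal{F}(F,\phi),\eta\rangle=\int_Y DV(y,F+\nabla\phi)\cdot\nabla\eta\,dy$. Because $V(y,\cdot)\in\mathcal{V}_\beta$, the bounds \eqref{vgrowthd1}--\eqref{vgrowthd2} guarantee that $DV$ is globally Lipschitz with linear growth, so $\mathcal{F}$ is well-defined and of class $C^1$ (its derivative in $\phi$ at $(F,\phi)$ is the map $\psi\mapsto\int_Y\hat{\mathbb L}_F(\nabla\psi)\cdot\nabla(\cdot)$). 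The partial derivative $\partial_\phi\mathcal{F}(F,\phi(F))$ is an isomorphism of $H^1_{\per,0}(Y,\R^d)$ onto its dual: coercivity follows from $D^2V\geq\beta\,\mathrm{Id}$ together with the Poincar\'e inequality on $H^1_{\per,0}$, and boundedness from $D^2V\leq\beta^{-1}\mathrm{Id}$; Lax--Milgram then gives invertibility. The implicit function theorem in Banach spaces therefore yields that $F\mapsto\phi(F)$ is $C^1$ from $\R^{d\times d}$ into $H^1_{\per,0}(Y,\R^d)$, and differentiating the identity $\mathcal{F}(F,\phi(F))=0$ in the direction $G$ gives exactly \eqref{eq:dphi1}, i.e.\ $\partial_G\phi(F)=D\phi(F)[G]$ solves $\divv(\hat{\mathbb L}_F(G+\nabla\psi))=0$ in $H^1_{\per,0}$; uniqueness of this solution is again Lax--Milgram. (Alternatively one avoids IFT machinery: testing the difference of the E--L equations at $F$ and $F+G$ with $\phi(F+G)-\phi(F)$ and using \eqref{ineq:strong-conv} gives the Lipschitz bound $\|\nabla\phi(F+G)-\nabla\phi(F)\|_{L^2}\lesssim|G|$, and then a second difference-quotient estimate against \eqref{eq:dphi1} identifies the derivative.)

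Next I would treat the flux. By definition \eqref{def:J}, $J(F)=DV(\cdot,F+\nabla\phi(F))-\fint_Y DV(y,F+\nabla\phi(y,F))\,dy$. Since $DV(y,\cdot)$ is $C^1$ with $D^2V$ bounded and $F\mapsto F+\nabla\phi(F)$ is $C^1$ into $L^2_{\per}(Y,\R^{d\times d})$, the composition $F\mapsto J(F)$ is $C^1$ into $L^2_{\per}(Y,\R^{d\times d})$ by the chain rule, with derivative $DJ(F)[G]=D^2V(\cdot,F+\nabla\phi(F))(G+\nabla\partial_G\phi(F))$ minus its mean, which is precisely \eqref{D:J}. (One technical point: differentiating a Nemytskii-type map $u\mapsto DV(\cdot,u)$ from $L^2$ to $L^2$ requires the uniform bound on $D^2V$ from \eqref{vgrowthd2} plus a dominated-convergence argument for the remainder; this is where the $\mathcal V_\beta$-structure is used essentially.) Finally, $\sigma(F)$ is obtained componentwise from $J(F)$ by solving the Poisson problems \eqref{eq:sigma}--\eqref{prop:sigma2}; the solution operator $L^2_{\per,0}\to H^1_{\per,0}$, $J\mapsto\sigma$, is \emph{linear} and bounded (by elliptic regularity / Fourier series on the torus, exactly as in \cite[Section 7.2]{JKO-94}), hence $F\mapsto\sigma(F)=(\text{this operator})\circ J(F)$ is $C^1$ with $D\sigma(F)[G]$ the solution of the same problem with right-hand side built from $DJ(F)[G]$, i.e.\ \eqref{eq:dgsigma:1}; uniqueness in $H^1_{\per,0}$ is immediate.

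The main obstacle is the middle step: verifying carefully that the superposition operator $u\mapsto DV(\cdot,u)$ (and its derivative $u\mapsto D^2V(\cdot,u)$) acts $C^1$ (resp.\ continuously) between the relevant $L^2_{\per}(Y)$ spaces. This is not entirely automatic for Nemytskii operators, but here it is forced by the two-sided bound $\beta|G|^2\leq D^2V(y,F)[G,G]\leq\beta^{-1}|G|^2$ in \eqref{vgrowthd2} (uniform in $y$ and $F$), which makes $DV(y,\cdot)$ globally Lipschitz and $D^2V(y,\cdot)$ bounded; continuity of $u\mapsto D^2V(\cdot,u)$ in the strong $L^2$-operator sense then follows from dominated convergence along subsequences. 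Everything else is a routine application of the implicit function theorem, Lax--Milgram, and linear elliptic theory on the torus.
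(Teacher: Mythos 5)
There is a genuine gap in your main route: the map $\mathcal F(F,\phi)=-\divv DV(\cdot,F+\nabla\phi)$, viewed as a map from $\R^{d\times d}\times H^1_{\per,0}(Y)$ into $(H^1_{\per,0}(Y))^*$, is in general \emph{not} Fr\'echet differentiable in $\phi$, so the Banach-space implicit function theorem in the $C^1$ category cannot be applied at this level. The obstruction is the classical rigidity of superposition operators: a Nemytskii map $u\mapsto DV(\cdot,u)$ from $L^2$ to $L^2$ is Fr\'echet differentiable only when $DV(y,\cdot)$ is affine, i.e. when $V$ is quadratic. The two-sided Hessian bound \eqref{vgrowthd2} does not rescue this. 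Indeed, test the candidate expansion at $u_0$ with concentrated bumps $h=M\chi_E$, $M$ fixed and $|E|\to0$: then $\|h\|_{L^2}=M|E|^{1/2}\to0$, while the remainder $DV(\cdot,u_0+h)-DV(\cdot,u_0)-D^2V(\cdot,u_0)h$ has $L^2$-norm comparable to $|E|^{1/2}$ times a fixed constant which is nonzero as soon as $DV(y,\cdot)$ is not affine on the segment $[u_0,u_0+M]$; the ratio remainder over $\|h\|_{L^2}$ thus does not tend to zero. What \eqref{vgrowthd2} does give is global Lipschitz continuity of $DV(y,\cdot)$ and strong-operator continuity of $u\mapsto D^2V(\cdot,u)$, which yields directional (G\^ateaux-type) differentiability along the finite-dimensional parameter $F$, but not the Fr\'echet $C^1$ property of $\mathcal F$ that your IFT step and your ``chain rule'' step for $J$ both invoke. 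Note that the paper does use an IFT argument, but only later (Lemma~\ref{L:convexcorrector}), in the $W^{2,q}_{\per,0}\to L^q_{\per,0}$ setting with $q>d$, where $\nabla\varphi$ is controlled in $L^\infty$ and extra joint smoothness of $DV$ is assumed; under Assumption~\ref{ass:V} alone, as in the present lemma, that route is not available.

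The viable argument is the one you relegate to a parenthesis, and it needs to be carried out in full, since the conclusion ``$F\mapsto\phi(F)$ is $C^1$'' rests on it: (i) the Lipschitz bound $\|\nabla\phi(F)-\nabla\phi(G)\|_{L^2}\lesssim|F-G|$ from strong convexity \eqref{ineq:strong-conv}; (ii) convergence of the difference quotients $\tfrac1t(\phi(F+tG)-\phi(F))$ in $H^1$ to the solution of \eqref{eq:dphi1}, obtained by writing the difference of the two Euler--Lagrange equations as a Taylor expansion with the intermediate tensors $D^2V(\cdot,F+\nabla\phi(F)+st(G+\nabla\partial^t_G\phi(F)))$ and passing to the limit by uniform ellipticity, the uniform bound on $D^2V$, and dominated convergence along a.e.-convergent subsequences; (iii) continuity of the partial derivatives $F\mapsto\partial_G\phi(F)$ by the same mechanism, which, since $F$ ranges in the finite-dimensional space $\R^{d\times d}$, is what upgrades directional differentiability to $C^1$. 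The same along-the-family Taylor/dominated-convergence argument (not a chain rule through a $C^1$ Nemytskii operator) establishes \eqref{D:J} and the continuity of $F\mapsto DJ(F)[G]$ in $L^2$. Your final step for $\sigma$ -- composing $J$ with the bounded linear solution operator of the periodic Poisson problem to get \eqref{eq:dgsigma:1} -- is correct and is exactly what the paper does.
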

The next lemma establishes (local) $W^{2,q}$-regularity $(q>d)$ for the extended corrector, provided the potential $V$ is sufficiently smooth and satisfies some additional structural assumptions:
\begin{lemma}\label{L:convexcorrector}
  Let $V$ satisfy Assumption~\ref{ass:V} and suppose that there exists $F_0\in\R^{d\times d}$ and $\delta>0$ s.t.
  \begin{equation*}
    \phi(F_0)=0\qquad\text{and}\qquad DV\in C^2(\R^d\times B_{\delta}(F_0)).
  \end{equation*}
  Then for any $q>d$ there exists $\bar{\bar\rho}>0$ such that 
  \begin{equation}\label{reg:convexcorrector}
    \|(F+\nabla\phi(F))-F_0\|_{L^\infty(Y)}<\delta\qquad\mbox{for all $F\in B_{\bar{\bar\rho}}(F_0)$},
  \end{equation}
  and the map $F\mapsto (\phi(F),\sigma(F))$ belongs to $C^1(B_{\bar{\bar\rho}}(F_0),W_{\per,0}^{2,q}(Y))$.
\end{lemma}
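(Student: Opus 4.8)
\textbf{Proof proposal for Lemma~\ref{L:convexcorrector}.}

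The plan is to reformulate the corrector equation as an abstract nonlinear equation $\mathcal F(F,\phi)=0$ on suitable Banach spaces and apply the implicit function theorem around the base point $(F_0,0)$, using the extra smoothness and structural hypotheses on $DV$ to get the mapping into a $W^{2,q}$-space. Concretely, I would first fix $q>d$ and work on the space $X:=W^{2,q}_{\per,0}(Y,\R^d)$, which by Sobolev embedding ($q>d$) sits in $W^{1,\infty}_{\per}(Y)$. For $F$ close to $F_0$ and $\phi\in X$ small, the map $y\mapsto F+\nabla\phi(y)$ stays in $B_\delta(F_0)$ by the embedding, so $DV(\cdot,F+\nabla\phi)$ is well-defined and, thanks to the hypothesis $DV\in C^2(\R^d\times B_\delta(F_0))$ together with the $Y$-periodicity/Caratheodory structure, it maps into $W^{1,q}_{\per}(Y,\R^{d\times d})$. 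Hence
\[
  \mathcal F:\;B_{\bar\rho}(F_0)\times B_X(0,r)\to L^{q}_{\per}(Y,\R^d),\qquad
  \mathcal F(F,\phi):=-\divv\big(DV(\cdot,F+\nabla\phi)\big),
\]
is a well-defined $C^1$ map (one more derivative of $DV$ is needed here, which is exactly why $C^2$ and not merely $C^1$ is assumed), with $\mathcal F(F_0,0)=0$ by the hypothesis $\phi(F_0)=0$ (which forces $DV(\cdot,F_0)$ to be divergence free, i.e. constant in $y$ up to the flux normalization).

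Next I would compute the partial Fréchet derivative $D_\phi\mathcal F(F_0,0)$. It is the linear operator $\psi\mapsto -\divv\big(\hat{\mathbb L}_{F_0}\nabla\psi\big)$ with $\hat{\mathbb L}_{F_0}=D^2V(\cdot,F_0)$, viewed as a map $X\to L^q_{\per}(Y,\R^d)$. By the uniform ellipticity and boundedness \eqref{vgrowthd2}, together with the $C^1$-regularity of the coefficient field $\hat{\mathbb L}_{F_0}$ (inherited from $DV\in C^2$), the classical periodic elliptic $L^q$-estimate (Calderón–Zygmund / Agmon–Douglis–Nirenberg theory for systems in divergence form with Hölder-continuous coefficients) shows that $-\divv(\hat{\mathbb L}_{F_0}\nabla\cdot)$ is an isomorphism from $W^{2,q}_{\per,0}(Y,\R^d)$ onto $L^q_{\per,0}(Y,\R^d)$. (The zero-average constraint on the target is automatic because a divergence of a periodic field has mean zero; one restricts the codomain accordingly, or equivalently composes with the projection onto mean-zero functions.) The implicit function theorem in Banach spaces then yields $\bar{\bar\rho}>0$ and a unique $C^1$ branch $F\mapsto\tilde\phi(F)\in X$ with $\mathcal F(F,\tilde\phi(F))=0$ and $\tilde\phi(F_0)=0$. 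By uniqueness in Lemma~\ref{L:phi} (the convex corrector is characterized by its Euler–Lagrange equation among $H^1_{\per,0}$ functions) this branch coincides with the corrector $\phi(F)$, so $\phi(F)\in W^{2,q}_{\per,0}(Y)$ and $F\mapsto\phi(F)$ is $C^1$ into this space. The bound \eqref{reg:convexcorrector} follows: shrinking $\bar{\bar\rho}$ if necessary, $\|\phi(F)\|_X$ is as small as we like, hence $\|\nabla\phi(F)\|_{L^\infty}$ and $|F-F_0|$ are both small, giving $\|(F+\nabla\phi(F))-F_0\|_{L^\infty(Y)}<\delta$.

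Finally, for the flux-corrector $\sigma(F)$ I would feed this regularity through \eqref{def:J} and \eqref{eq:sigma}: once $\phi(F)\in W^{2,q}_{\per,0}$, the flux $J(F)=DV(\cdot,F+\nabla\phi(F))-\langle\cdot\rangle$ lies in $W^{1,q}_{\per}(Y)$ (composition of the $C^2$ map $DV$ with a $W^{2,q}$, hence $W^{1,\infty}$-controlled, argument), so the right-hand side $\partial_kJ_{ij}-\partial_jJ_{ik}$ of \eqref{eq:sigma} lies in $L^q_{\per,0}(Y)$; elliptic $L^q$-regularity for the (constant-coefficient) Laplacian on the torus then gives $\sigma(F)\in W^{2,q}_{\per,0}(Y,\R^{d\times d\times d})$. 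The $C^1$-dependence $F\mapsto\sigma(F)\in W^{2,q}_{\per,0}$ is obtained by differentiating: from Lemma~\ref{L:Dphi} and the chain rule, $F\mapsto J(F)$ is $C^1$ into $W^{1,q}_{\per}$, and composing with the bounded solution operator $(-\Delta)^{-1}$ of \eqref{eq:sigma} (continuous $L^q_{\per,0}\to W^{2,q}_{\per,0}$) preserves the $C^1$-regularity.

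The main obstacle is verifying the isomorphism property of the linearized operator at the $W^{2,q}$-level: this requires invoking $L^q$-Calderón–Zygmund estimates for elliptic systems in divergence form with merely continuous (here $C^{0,\gamma}$, since $DV\in C^2$) coefficients on the torus, and correctly bookkeeping the mean-zero constraints so that the operator is onto. Everything else — the $C^1$-regularity of the superposition (Nemytskii) operator $\phi\mapsto DV(\cdot,F+\nabla\phi)$ from $W^{2,q}$ into $W^{1,q}$, which is where the full strength $DV\in C^2$ is consumed, and the smallness argument for \eqref{reg:convexcorrector} — is routine given the hypotheses and the earlier lemmas.
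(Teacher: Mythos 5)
Your proposal is correct and follows essentially the same route as the paper: the paper also recasts the Euler--Lagrange equation as $T(G,\varphi)=-\divv\big(DV(\cdot,F_0+G+\nabla\varphi)\big)$ on $B_1(\rho')\times B_2(\rho')\subset\R^{d\times d}\times W^{2,q}_{\per,0}(Y)$ with values in $L^q_{\per,0}(Y)$, checks $C^1$-regularity of the superposition operator, $T(0,0)=0$ from $\phi(F_0)=0$, and invertibility of $D_\varphi T(0,0)=-\divv(D^2V(\cdot,F_0)\nabla\cdot)$ by maximal $L^q$-regularity, then applies the implicit function theorem and identifies the branch with the corrector by uniqueness. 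The treatment of $\sigma(F)$ via the bounded solution operator of the periodic Laplacian acting on $J(F)\in W^{1,q}_{\per,0}(Y)$ likewise matches the paper's Step~2.
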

In the proof of the lemma we appeal to the implicit function theorem. The previous lemma is tailor made for an application to the \textit{matching convex lower bound} $V$ constructed in Section~\ref{sec:V}:
\begin{corollary}\label{C:corrector}
Let $W$ satisfy Assumption~\eqref{ass:W:1}, let $V:\R^d\times \R^{d\times d}\to\R$ denote the matching convex lower bound associated with $W$ via Corollary~\ref{C:wv} with parameters $\mu,\delta>0$, and let $(\phi,\sigma)$ denote the extended corrector associated with $V$. Then for any $q>d$ there exists $\bar{\bar\rho}>0$ such that 
\begin{equation*}
 \|\dist(F+\nabla\phi(F),\SO d)\|_{L^\infty(Y)}<\delta\qquad\mbox{for all $F\in U_{\bar{\bar\rho}}$},
\end{equation*}
and the map $F\mapsto(\phi(F),\sigma(F))$ belongs to  $C^1(U_{\bar{\bar\rho}},W_{\per,0}^{2,q}(Y))$.
\end{corollary}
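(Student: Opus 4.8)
The plan is to deduce the corollary from Lemma~\ref{L:convexcorrector} by applying it at each rotation $F_0=R\in\SO d$, and then to pass from the resulting local statements (valid on small balls $B_{\bar{\bar\rho}_R}(R)$) to the uniform tube $U_{\bar{\bar\rho}}$ around $\SO d$ by exploiting compactness of $\SO d$.

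First I would verify the two hypotheses of Lemma~\ref{L:convexcorrector} at $F_0=R$, for arbitrary $R\in\SO d$. The smoothness hypothesis $DV\in C^2(\R^d\times B_\delta(R))$ is immediate: by Corollary~\ref{C:wv} we have $V(y,F)=W(y,F)+\mu\det F$ for a.e.\ $y\in Y$ and all $F\in U_\delta$, and $B_\delta(R)\subset U_\delta$; since $W\in C^3(\R^d\times\overline{U_\alpha})$ by Assumption~\ref{ass:W:1}(ii) (if necessary we shrink $\delta$ so that $\overline{U_\delta}\subset U_\alpha$, which does not affect the validity of the claimed conclusion) and $\det$ is smooth, $DV$ is jointly $C^2$ near $\SO d$. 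For the hypothesis $\phi(R)=0$, I would compute $DV(y,R)=DW(y,R)+\mu D\det(R)$: by frame indifference (W2) and minimality at $\Id$ one has $W(y,R)=W(y,\Id)=0$, so $R$ is an interior minimizer of the $C^1$ map $W(y,\cdot)$ and $DW(y,R)=0$, while $D\det(R)=\cof R=\det(R)R^{-T}=R$ for $R\in\SO d$. Hence $DV(y,R)=\mu R$ for a.e.\ $y$, a constant (hence divergence-free) matrix field, so the constant map $\phi\equiv0$ solves the Euler--Lagrange equation $\divv DV(\cdot,R+\nabla\phi)=0$ associated with the convex minimization problem \eqref{eq:convexcorrector}; by strict convexity and the uniqueness in Lemma~\ref{L:phi}, it is the corrector, i.e.\ $\phi(R)=0$.

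With the hypotheses in place, Lemma~\ref{L:convexcorrector} provides, for each fixed $q>d$ and each $R\in\SO d$, a radius $\bar{\bar\rho}_R=\bar{\bar\rho}_R(q)>0$ such that $\|(F+\nabla\phi(F))-R\|_{L^\infty(Y)}<\delta$ for all $F\in B_{\bar{\bar\rho}_R}(R)$ (whence $\|\dist(F+\nabla\phi(F),\SO d)\|_{L^\infty(Y)}\le\|(F+\nabla\phi(F))-R\|_{L^\infty(Y)}<\delta$) and $F\mapsto(\phi(F),\sigma(F))\in C^1(B_{\bar{\bar\rho}_R}(R),W^{2,q}_{\per,0}(Y))$. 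To globalize, I would cover the compact set $\SO d$ by the open balls $\{B_{\bar{\bar\rho}_R/2}(R)\}_{R\in\SO d}$, extract a finite subcover $\{B_{\bar{\bar\rho}_{R_i}/2}(R_i)\}_{i=1}^m$, and set $\bar{\bar\rho}:=\tfrac12\min_{1\le i\le m}\bar{\bar\rho}_{R_i}$. Given $F\in U_{\bar{\bar\rho}}$, pick $R\in\SO d$ realizing $|F-R|=\dist(F,\SO d)<\bar{\bar\rho}$ and an index $i$ with $|R-R_i|<\bar{\bar\rho}_{R_i}/2$; then $|F-R_i|<\bar{\bar\rho}+\bar{\bar\rho}_{R_i}/2\le\bar{\bar\rho}_{R_i}$, so $F\in B_{\bar{\bar\rho}_{R_i}}(R_i)$. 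Thus the pointwise bound holds on all of $U_{\bar{\bar\rho}}$, and since $U_{\bar{\bar\rho}}\subset\bigcup_{i=1}^m B_{\bar{\bar\rho}_{R_i}}(R_i)$, the globally and uniquely defined map $F\mapsto(\phi(F),\sigma(F))$ is $C^1$ into the fixed Banach space $W^{2,q}_{\per,0}(Y)$ on each chart, hence on $U_{\bar{\bar\rho}}$, because $C^1$-regularity is a local property.

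As for the main difficulty: essentially all of the analytic work is already contained in Lemma~\ref{L:convexcorrector} (which itself rests on the implicit function theorem for the corrector equation of $V$). The only genuinely new point here is that the $W^{2,q}$-regularity and the $L^\infty$-closeness must be secured on a neighborhood that is uniform over the \emph{non-compact} family of matrices near $\SO d$; this is precisely where compactness of $\SO d$ enters, together with the elementary observation $\phi(R)=0$, which furnishes a common base point for the implicit-function-theorem argument at every rotation.
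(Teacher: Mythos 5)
Your proposal is correct, and it rests on the same key lemma as the paper (Lemma~\ref{L:convexcorrector}); the only genuine divergence is in how the local statement is promoted to the tube $U_{\bar{\bar\rho}}$. The paper applies the lemma once, at $F_0=\Id$ (checking $\phi(\Id)=0$ via minimality of $W(y,\cdot)$ at $\Id$ and the Null-Lagrangian property of $\det$), and then transfers the conclusion to all of $U_{\bar{\bar\rho}}$ in one line using the frame-indifference of $V$, which gives $\phi(F)=R\phi(R^TF)$ (and similarly for $\sigma$); this yields the uniform radius and the $C^1$-regularity on the whole tube automatically. You instead apply the lemma at every rotation $F_0=R$ — your verification that $\phi(R)=0$ via $DV(y,R)=DW(y,R)+\mu D\det(R)=\mu R$, a constant and hence divergence-free field, is a correct variant of the paper's argument at the identity — and then obtain a uniform radius by a finite covering of the compact set $\SO d$, gluing the local $C^1$ statements for the globally defined corrector map. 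Both routes are sound; the symmetry argument is shorter and exploits the structure of $V$ (property \eqref{V:frame}), whereas your covering argument does not need frame-indifference of $V$ at this stage, at the price of re-verifying the base-point condition at each rotation and invoking compactness. Your remark about shrinking $\delta$ so that $\overline{U_\delta}\subset U_\alpha$ is harmless, since a bound by a smaller $\delta$ implies the asserted one.
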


The rest of this section is devoted to the proofs of Lemma~\ref{L:phi} -- Lemma~\ref{L:convexcorrector}, and Corollary~\ref{C:corrector}.

\begin{proof}[Proof of Lemma~\ref{L:phi}]
 
\step 1 Corrector $\phi$. For every $F\in\R^{d\times d}$ the minimization problem \eqref{eq:convexcorrector} admits a unique minimizer $\phi(F)\in H_{\per,0}^1(Y)$ as can be seen by appealing to the direct method, \eqref{vgrowth} and the strong convexity of $V(y,\cdot)$ for almost every $y\in Y$. Since $V$ is sufficiently smooth, see \eqref{vgrowthd1}, $\phi(F)$ is characterised as the unique $H_{\per,0}^1(Y)$ solution of the corresponding Euler-Lagrange equation
\begin{equation}\label{corrector:EL}
 \int_Y D V(y,F+\nabla \phi(F))[\nabla \eta]\,dy=0\quad\mbox{for all $\eta\in H_\per^1(Y)$.}
\end{equation}

\step 2 Flux $J$. Since $V(y,\cdot)\in \mathcal V_\beta$ and $\phi(F)\in H_\per^1(Y)$ for every $F\in\R^{d\times d}$, we obtain 
$$\int_Y|D V(y,F+\nabla\phi(F))|^2\,dy\stackrel{\eqref{vgrowthd1}}{\leq} \tfrac1\beta\int_Y(1+|F+\nabla\phi(F)|)^2\,dy<\infty,$$
and thus $J(F)\in L_\per^2(Y,\R^{d\times d})$.

\step 3 Flux-corrector $\sigma$. The right-hand side of \eqref{eq:sigma} is of divergence form as it can be rewritten as $\divv Q_{ijk}(F)$ with $Q_{ijk}=J(F)_{ij}e_k-J(F)_{ik}e_j\in L^2(Y,\R^d)$.  Hence, there exists a unique $H_{\per,0}^1(Y)$ solution for \eqref{eq:sigma}. The anti-symmetry property \eqref{prop:sigma1} follows directly from the equation \eqref{eq:sigma}. The identity \eqref{prop:sigma2} is a consequence of the periodicity of $J(F)$, $\divv(J(F))=0$, cf.~\eqref{corrector:EL}, and $\int_Y J(F)=0$. For convenience of the reader, we sketch the argument here: For every $\eta\in C_\per^\infty(Y)$ holds
\begin{align*}
 \sum_{k=1}^d\int_Y \partial_k\sigma_{ijk}(F)\Delta\eta=&\sum_{j,s=1}^d\int_Y \partial_s\sigma_{ijk}\partial_s\partial_k\eta\stackrel{\eqref{eq:sigma}}{=}\sum_{k=1}^d\int_YJ(F)_{ik}\partial_j\partial_k\eta- J(F)_{ij}\partial_k\partial_k\eta\\
 =& \sum_{k=1}^d \int_Y\langle J(F),\nabla(\partial_j\eta e_i)\rangle-\int_Y J(F)_{ij}\Delta\eta=-\int_Y J(F)_{ij}\Delta\eta.
\end{align*}
Hence, by a variant of Weyl's Lemma $z:=(\sum_k\partial_k\sigma_{ijk}(F))+J(F)_{ij}$ is smooth and harmonic. Combining this with the periodicity of $z$ and $\int_Yz=0$, we obtain $z\equiv0$.
\end{proof}

\begin{proof}[Proof of Lemma~\ref{L:Dphi}]
\step{1} Differentiability of $\phi$.

The proof closely follows \cite[Proof of Theorem 5.4]{Mueller93}. We first show that $F\mapsto \phi(F)$ is Lipschitz, then we prove the existence of all partial derivatives and conclude by showing the continuity of the partial derivatives. 

\textit{Substep 1.1} The map $F\mapsto \phi(F)$ is Lipschitz. 

We claim that for every $F,G\in\R^{d\times d}$ it holds
\begin{equation}\label{phi:lip}
 \|F+\nabla\phi(F)-(G+\nabla\phi(G))\|_{L^2(Y)}\leq \tfrac1{\beta^2} |F-G|.
\end{equation}
Indeed, strong convexity of $V$ in form of \eqref{ineq:strong-conv}, the Euler-Lagrange equation \eqref{corrector:EL}, and the growth condition \eqref{vgrowthd1} imply that
\begin{align*}
 &\beta\|F+\nabla\phi(F)-(G+\nabla\phi(G))\|_{L^2(Y)}^2\\
 &\leq \int_Y\left(DV(y,F+\nabla\phi(F))-DV(y,G+\nabla\phi(G))\right)[F+\nabla\phi(F)-G-\nabla\phi(G)]\,dy\\
 &= \int_Y\left(DV(y,F+\nabla\phi(F))-DV(y,G+\nabla\phi(G))\right)[F-G]\,dy\\
 &\leq\tfrac1\beta |F-G|\int_Y|F-G+\nabla(\phi(F)-\phi(G))|\leq \tfrac1\beta |F-G| \|F+\nabla\phi(F)-(G+\nabla\phi(G))\|_{L^2(Y)},
\end{align*}
and thus \eqref{phi:lip}. Combined with Poincar\'e's inequality, and $\int_Y\phi(F)=\int_Y\phi(G)=0$, we get
\begin{align}\label{phi:lip2}
 \|\phi(F)-\phi(G)\|_{H^1(Y)}\leq C(\beta,d)|F-G|.
\end{align}

\textit{Substep 1.2} Existence of partial derivatives.

Fix $F,G\in\R^{d\times d}$. We show  
\begin{equation}\label{lim:DGtphi:1}
 \partial_G^t\phi(F):=\tfrac1t(\phi(F+tG)-\phi(F))\to \partial_G\phi(F)\quad\mbox{in $H^1(Y)$ as $t\to0$,}
\end{equation}
where $\partial_G\phi(F)$ is the unique weak solution in $H_{\per,0}^1(Y)$ of \eqref{eq:dphi1}. For the argument, consider for $\eta\in H_\per^1(Y)$, $t\in\R$ and $s\in[0,1]$ the integral
\begin{equation*}
  \int_Y D V(y,F+\nabla\phi(F)+st(G+\nabla\partial_G^t\phi(F))\big)[\nabla \eta]\,dy=:f(s).
\end{equation*}
For $s=0$ and $s=1$ the integral turns into the left-hand side of the Euler-Lagrange equation for $\phi(F)$ and $\phi(F+tG)$, respectively. Hence, we have $f(1)=f(0)=0$, and  a Taylor expansion yields
\begin{equation}\label{eq:DGphitaylor1:1}
  0=\int_0^1f'(s)\,ds=t\int_Y\int_0^1\left\langle\hat{\mathbb L}_{F}^{s,t}(y)(G+\nabla\partial_G^t\phi(F)),\nabla\eta\right\rangle\,ds\,dy,
\end{equation}
where
\begin{equation}\label{def:Ns}
 \hat{\mathbb L}_{F}^{s,t}(y):=D^2V(y,N_G^{s,t}(F))\;\mbox{ with }\; N_G^{s,t}(F):=F+\nabla\phi(F)+st(G+\nabla \partial_G^t\phi(F)).
\end{equation}
Set $\psi_t:=\partial_G^t\phi(F)-\partial_G\phi(F)=(\partial_G^t\phi(F)+G)-(\partial_G\phi(F)+G)$. The ellipticity of $D^2V$ (cf. \eqref{vgrowthd2}), the equation for $\partial_G\phi(F)$ (cf. \eqref{eq:dphi1}), and  \eqref{eq:DGphitaylor1:1} imply
\begin{align*}
  \beta\int_Y|\nabla\psi_t|^2\leq& \int_Y\int_0^1\langle\hat{\mathbb L}_F^{s,t}(y)\nabla\psi_t,\nabla\psi_t\rangle\,ds\,dy\\
  =&\int_Y\int_0^1\langle(\hat{\mathbb L}_F(y)-\hat{\mathbb L}_F^{s,t}(y))(G+\nabla\partial_G\phi(F)),\nabla\psi_t\rangle\,ds\,dy.
\end{align*}
Hence, to conclude \eqref{lim:DGtphi:1}, it suffices to show that
\begin{equation*}
  \lim_{t\to\infty}\int_0^1\int_Y|(\hat{\mathbb L}_F-\hat{\mathbb L}_F^{s,t})(G+\nabla\partial_G\phi(F))|^2\,dsdy=0.
\end{equation*}
For the argument note that thanks to the uniform bound on $D^2V$ (cf. \eqref{vgrowthd2}), the integrand is dominated (up to a constant) by the integrable function $|G+\nabla\partial_G\phi(F)|^2$. Hence, it suffices to argue that from any subsequence, we can extract a subsequence $t_j\to 0$ s.t. 
\begin{equation*}
  \lim\limits_{j\to\infty}\hat{\mathbb L}_F^{s,t_j}(y)\to   \hat{\mathbb L}_F(y)\qquad\text{for almost every }y\in Y\text{ and every }s\in[0,1].
\end{equation*}
This follows from the Lipschitz-continuity of $\phi$, cf.~\eqref{phi:lip2}, which implies 
\begin{equation*}
 \limsup_{t\to0}\|\partial_G^t\phi(F)\|_{H^1(Y)}\leq C(\beta,d)|G|<\infty,
\end{equation*}
and thus $t\nabla \partial_G^t\phi(F)\to 0$ in $L^2(Y)$ and $N_G^{s,t}(F)\to F+\nabla\phi(F)$ in $L^2(Y)$ for every $s\in(0,1)$. By continuity of $D^2V$, the latter implies the claimed convergence (along subsequences).
\color{black}
\medskip

\textit{Substep 1.3} Continuity of partial derivatives.

We claim that $\partial_G\phi(F_n)\to\partial_G\phi(F)$ in $H^1(Y)$, whenever $F_n\to F$ in $\R^{d\times d}$. For the argument consider $\psi_n:=\partial_G\phi(F_n)-\partial_G\phi(F)$. The ellipticity of $\hat{\mathbb L}_{F_n}$ and \eqref{eq:dphi1} imply
\begin{align}\label{eq:substep13}
 \beta\int_Y|\nabla \psi_n|^2\leq& \int_Y\langle\hat{\mathbb L}_{F_n}\nabla \psi_n,\nabla\psi_n\rangle = \int_Y\langle(\hat{\mathbb L}_F-\hat{\mathbb L}_{F_n})(G+\nabla\partial_G\phi(F)),\nabla \psi_n\rangle.
\end{align}
Since $F\mapsto\phi(F)\in H_{\per,0}^1(Y)$ is Lipschitz-continuous it follows $\phi(F_n)\to\phi(F)$ in $H^1(Y)$ and thus, up to subsequence, $\hat{\mathbb L}_{F_n}\to\hat{\mathbb L}_F$ almost everywhere in $Y$. As in the previous substep, we conclude that $(\hat{\mathbb L}_F-\hat{\mathbb L}_{F_n})(G+\nabla\partial_G\phi(F))\to0$ in $L^2(Y)$. Hence, \eqref{eq:substep13} and Poincar\'e's inequality yield $\psi_n\to 0$ in $H^1(Y)$ and thus the claim.

\step{2} Differentiability of $J$ and continuity of $F\mapsto \partial_G J(F)$.

We first claim that $\tfrac1t(J(F+tG)-J(F))\to \partial_GJ(F)$ in $L^2(Y)$ as $t\to 0$ for any $F,G\in\R^{d\times d}$. For the argument denote by $\partial_GJ(F)$ the right-hand side of \eqref{D:J}. A Taylor expansion yields for almost every $y\in Y$ that
\begin{align*}
 \tfrac1t(J(F+tG)(y)-J(F)(y))=\int_0^1\hat{\mathbb L}_F^{s,t}(y)(G+\nabla \partial_G^t\phi(F)(y))\,ds-\int_Y\int_0^1\hat{\mathbb L}_F^{s,t}(G+\nabla \partial_G^t\phi(F))\,ds,
\end{align*}
where $\hat{\mathbb L}_F^{s,t}$ is defined as in \eqref{def:Ns}. Now, the claim follows from the convergence $\hat{\mathbb L}_F^{s,t}\to \hat{\mathbb L}_F$ (for almost every $y\in Y$ and all $s\in[0,1]$ up to a subsequence), the equiboundedness of $\hat{\mathbb L}_F^{s,t}$, and $\partial_G^t\phi(F)\to \partial_G\phi(F)$ in $H^1(Y)$.

The continuity of $F\mapsto\partial_G J(F)$ with respect to $L^2(Y)$ for every $G\in\R^{d\times d}$ is a straightforward consequence of the $H^1(Y)$ differentiability of $F\mapsto\phi(F)$ (proven in the previous step) and the equiboundedness of the tensor $\hat {\mathbb L}_F$.

\step{3} Differentiability of $\sigma$.

This is a direct consequence of the differentiability of $J$ and the fact that the map $T:L_{\per,0}^2(Y,\R^d)\to H_{\per,0}^1(Y,\R)$ given by $Tf:=u$ with $-\Delta u=\divv f$, is linear and bounded. Moreover, \eqref{eq:dgsigma:1} follows from \eqref{eq:sigma} and \eqref{D:J}.
 
\end{proof}

\begin{proof}[Proof of Lemma~\ref{L:convexcorrector}]

\step{1} Properties of the map $F\mapsto \phi(F)$.

Recall that $\phi(F)$ is characterised as the unique $H_{\per,0}^1(Y)$ solution of the corresponding Euler-Lagrange equation \eqref{corrector:EL}, which we recall in form of the identity 
\begin{equation*}
  -\divv\big(DV(\cdot,F_0+G+\nabla\varphi)\big)=0\qquad\text{ in the sense of distribution, where }G=F-F_0.
\end{equation*}
We prove that $F\mapsto\phi(F)$ satisfies the claimed properties by appealing to the implicit function theorem. To that end we define for $\rho>0$ the balls
\begin{align*}
 B_1(\rho):=\{G\in\R^{d\times d}\,|\, |G|<\rho\,\},\quad B_2(\rho):=\{\,\varphi\in W_{\per,0}^{2,q}(Y)\,|\, \|\varphi\|_{W^{2,q}(Y)}<\rho\,\}.
\end{align*}
By Sobolev embedding and $q>d$, we find $\rho'>0$ such that 
\begin{equation}\label{corrector:close}
  \|G+\nabla\varphi\|_{L^\infty(Y)}<\delta\quad\mbox{for all $G\in B_1(\rho')$ and $\varphi\in B_2(\rho')$}.
\end{equation}
Hence, thanks to the regularity assumption on $DV$, we may define
\begin{equation*}
  T:B_1(\rho')\times B_2(\rho')\to L_{\per,0}^q(Y),\qquad T(G,\varphi):=-\divv\big(DV(\cdot,F_0+G+\nabla\varphi)\big).
\end{equation*}
We claim that:
\begin{enumerate}[(a)]
\item $T\in C^1\big(B_1(\rho')\times B_2(\rho'), L_{\per,0}^q(Y)\big)$.\\
  \textit{Argument:} By assumption we have $DV\in C^2(\R^d\times B_{\delta}(F_0))$. Hence, the differentiability of $T$ follows from the differentiability of the corresponding composition operators, see Lemma~\ref{L:composition}, and \eqref{corrector:close}. In particular, we obtain   
  $$D_{\varphi} T(G,\varphi)[\phi]=-\divv(D^2V(\cdot,F_0+G+\nabla \varphi)[\nabla \phi]).$$
\item $T(0,0)=0$.\\
  \textit{Argument:} By assumption the functional
  \begin{equation*}
    H^1_{\per,0}(Y)\ni\varphi\mapsto \int_Y V(y,F_0+\nabla\varphi)\,dy
  \end{equation*}
  is minimized for $\varphi=\phi(F_0)=0$. Since the minimizer is characterized by the associated Euler-Lagrange equations, we deduce that $\int_Y DV(y,F_0)[\nabla\phi]\,dy=0$ for all $\phi\in H^1_{\per}(Y)$. Hence, periodicity and smoothness of $DV(\cdot,F_0)$ imply
  \begin{equation*}
    T(0,0)(y)=-\divv DV(y,F_0)=0\qquad\text{for all }y\in\R^d.
  \end{equation*}
\item $D_\varphi T(0,0):W_{\per,0}^{2,q}(Y)\to L_{\per,0}^q(Y)$ is invertible.\\
  \textit{Argument:} From (a), we deduce that $D_\varphi T(0,0)[\phi]=-\divv(D^2V(\cdot,F_0)[\nabla \phi])$. Since $V$ is strongly convex, we deduce that $D^2V(\cdot,F_0)\in C^1_\per(Y)$ is uniformly elliptic. Hence, we obtain by appealing to maximal $L^q$-regularity that $D_\varphi T(0,0)$ is invertible.
\end{enumerate}
By appealing to the implicit function theorem we conclude that there exists a map $\Lambda\in C^1(B_1(\bar{\bar\rho}), W_{\per,0}^{2,q}(Y))$ such that $T(G,\Lambda(G))=0$ and $\Lambda(G)\in B_2(\bar{\bar\rho})$. Since $\phi(F_0+G)=\Lambda(G)$, the claimed properties of the map $F\mapsto\phi(F)$ follow.

\step{2} Properties of the map $F\mapsto \sigma(F)$.

By combining
\begin{itemize}
 \item $\phi\in C^1(B_{\bar{\bar\rho}}(F_0), W_{\per,0}^{2,q}(Y))$ with $q>d$ and $\|\dist(F+\nabla \phi(F),\SO d)\|_{L^\infty}<\delta$, cf.~\eqref{reg:convexcorrector},
 \item $DV\in C^2(\R^d\times B_{\delta}(F_0))$,
\end{itemize}
we obtain that $F\mapsto J(F)$ is in $C^1(B_{\bar{\bar\rho}}(F_0),W_{\per,0}^{1,q}(Y))$, see~Lemma~\ref{L:composition}. Now, the claimed regularity property of $F\mapsto \sigma(F)$ follows from \eqref{eq:sigma} and the fact that the map $T:W_{\per,0}^{1,q}(Y)\to W_{\per,0}^{2,q}(Y)$ given by $f\mapsto T(f)$ with $\Delta T(f)=\divv f$ is bounded by maximal regularity and linear.
\end{proof}

\begin{proof}[Proof of Corollary~\ref{C:corrector}]
  Denote by $(\phi,\sigma,J)$ the extended corrector and the flux associated with $V$ according to Lemma~\ref{L:phi}. We argue that we can apply Lemma~\ref{L:convexcorrector} with $F_0=\Id$. Indeed, by the matching property, we have $V(y,\cdot)=W(y,\cdot)+\mu\det(\cdot)$ on $U_\delta$ (and thus on $B_\delta(\Id)$). Hence, the smoothness of $W$ close to $\R^d\times \SO d$, cf. \eqref{ass:wd2lip}, yields $DV\in C^2(\R^d\times B_\delta(\Id))$. Moreover, since $W(y,\cdot)$ is minimal at $\Id$, and $\det(\cdot)$ is a Null-Lagrangian, we conclude that 
\begin{equation*}
  H^1_{\per}(Y)\ni\varphi\mapsto \int_YW(y,\Id+\nabla\varphi)+\mu\det(\Id+\nabla\varphi)\,dy
\end{equation*}
is minimized by $\varphi=0$. Hence,
\begin{equation*}
  0=-\divv\Big(DW(\cdot,\Id)+\mu\det(\Id)\Big)=-\divv DV(\cdot,\Id)\qquad\text{on }\R^d,
\end{equation*}
and we conclude (by appealing to strong convexity) that the corrector at identity associated with $V$ vanishes, i.e. $\phi(\Id)=0$. Therefore, the assumptions of Lemma~\ref{L:convexcorrector} are satisfied, and we conclude that there exists $\bar{\bar\rho}>0$ s.t.  $F\mapsto (\phi(F),\sigma(F))$ is of class $C^1(B_{\bar{\bar\rho}}(\Id),W_{\per,0}^{2,q}(Y))$ and \eqref{reg:convexcorrector} holds (with $F_0=\Id$). By frame-indifference of $V$, i.e. \eqref{V:frame}, we have $\phi(F)=R\phi(R^TF)$ for all $R\in\SO d$ and $F\in\R^{d\times d}$ (and similarly for $\sigma$). Hence,  $F\mapsto (\phi(F),\sigma(F))$ is of class $C^1(U_{\bar{\bar\rho}},W_{\per,0}^{2,q}(Y))$, and we have \eqref{ref:convexcorrector2a}.

\end{proof}

\section{Single-cell formula, Proof of Theorem~\ref{T:1cell}}\label{sec:1cell}

The proof is organized as follows:
\begin{itemize}
\item In Step 1 we associated with $W$ a matching convex lower bound $V$ by appealing to Corollary~\ref{C:wv}, and we establish $C^2$-regularity of $V_{\ho}$. The latter only requires $H^1$-regularity of the corrector $\phi(F)$ (associated with $V$).
\item In Step 2 we show that $V_{\ho}$ is $C^3$-regular close to $\SO d$. This is done by appealing to \textit{Lipschitz regularity} of $\phi(F)$ for $F$ close to $\SO d$, see \eqref{ref:convexcorrector2a} and \eqref{ref:convexcorrector2b} below. These estimates are consequences of Corollary~\ref{C:corrector}.
\item In Step 3 we prove the validity of the single-cell formula. This is done by lifting the formula for the convex potential $V_{\ho}$ to the non-convex potential $W_\ho$. The argument relies on the matching property \eqref{W=V} and the Lipschitz regularity of $\phi(F)$, cf. \eqref{ref:convexcorrector2a}.
\item In Step 4 we prove  $C^3$-regularity of $W_{\ho}$ (close to $\SO d$) and establish the expansion of $W_{\ho}$. Again, this is done by lifting similar properties of $V_{\ho}$ to the level of $W_\ho$. In addition to \eqref{W=V} and \eqref{ref:convexcorrector2a}, the argument exploits the fact that $\det(\cdot)$ is a Null-Lagrangian.
\item In Step 5 we prove that $\phi(F)$ in the single-cell representation of $W_{\ho}(F)$ is unique, and in Step 6 we prove strong rank-one convexity of $W_\ho$ (close to $\SO d$).
\end{itemize}

\step 1 Matching convex lower bound $V$ and $C^2$-regularity  of $V_{\ho}$.

By appealing to Corollary~\ref{C:wv} we may associated with $W$ a matching convex lower bound $V$ with parameters $\delta,\mu,\beta>0$. We denote by $(\phi,\sigma)$ the extended corrector associated with $V$. 
We argue that $V_\ho\in C^2(\R^{d\times d})$ and claim that for all $F,G,H\in\R^{d\times d}$ we have
\begin{equation}\label{d2vhomgh}
  \begin{aligned}
    DV_\ho(F)[G]=&\,\int_Y DV(y,F+\nabla\phi(F))[G],\\
    D^2V_\ho(F)[G,H]=&\,\int_Y \left\langle\hat{\mathbb
        L}_F(G+\nabla\partial_G\phi(F)),H+\nabla\partial_H\phi(F)\right\rangle\,dy,
  \end{aligned}
\end{equation}
where $\hat{\mathbb L}_F$ and $\partial_G\phi$ are defined in \eqref{def:L} and Lemma~\ref{L:Dphi}. Indeed, the $C^2$-regularity is a consequence of \cite[Theorem 5.4]{Mueller93}, where periodic, strongly convex energy densities with quadratic growth are considered. Moreover, the first identity in \eqref{d2vhomgh} and
\begin{equation}\label{d2vhominf}
 D^2V_\ho(F)[G,G]=\inf\left\{\int_Y \langle \hat{\mathbb L}_F(y)(G+\nabla \psi),G+\nabla\psi\rangle\,dy\ |\ \psi\in H_\per^1(Y)\right\},
\end{equation}
follow from \cite[Theorem 5.4]{Mueller93} (and its proof) as well. Thus, by Lemma~\ref{L:Dphi} (cf. \eqref{eq:dphi1}) we get
\begin{equation*}
 D^2V_\ho(F)[G,G]=\int_Y \left\langle\hat{\mathbb L}_F(y)(G+\nabla\partial_G\phi(F)),G+\nabla\partial_G\phi(F)\right\rangle\,dy.
\end{equation*}
Combined with the polarization identity and \eqref{eq:dphi1} we obtain the second identity in \eqref{d2vhomgh}.

\step 2 $C^3$-regularity of $V_\ho$ close to $\SO d$.

To conclude $C^3$-regularity of $V_\ho$ we require Lipschitz regularity of the corrector $\phi$: By appealing to Corollary~\ref{C:corrector} we deduce that the corrector $\phi$ satisfies for some $\bar{\bar\rho}>0$ the Lipschitz estimate
\begin{equation}\label{ref:convexcorrector2a}
  \|\dist(F+\nabla\phi(F),\SO d)\|_{L^\infty(Y)}<\delta\qquad\text{for all }F\in U_{\bar{\bar\rho}},
\end{equation}
and (by Sobolev embedding and $q>d$) the regularity property 
\begin{equation}\label{ref:convexcorrector2b}
 F\mapsto \phi(F)\qquad\text{ is of class }C^1(U_{\bar{\bar\rho}},W^{1,\infty}(Y)).
\end{equation}
We claim that $V_\ho\in C^3(U_{\bar{\bar\rho}})$, and thus need to prove existence and continuity of all third order partial derivatives. 

\substep{2.1} Fix $F\in U_{\bar{\bar\rho}}$. For given $G,H,I\in\R^{d\times d}$, we claim that
\begin{eqnarray*}
  \partial_{G,H,I}^3V_\ho(F)&:=&\lim_{t\to0}\tfrac1t(D^2 V_\ho(F+tI)[G,H] - D^2 V_\ho(F)[G,H])\\
  &=&\int_YD^3V(y,F+\nabla\phi(F))[G+\nabla \partial_G(F),H+\nabla\partial_H\phi(F),I+\nabla\partial_I\phi(F)]\,dy\notag
\end{eqnarray*}
To shorten the presentation, for every $F,J\in\R^{d\times d}$, and $s,t\in\R$,  we set
\begin{eqnarray*}
  \Psi_J(F)&:=&J+\nabla \partial_J\phi(F),\\
  \partial_J^t\phi(F)&:=&\frac1t(\phi(F+tJ)-\phi(F)),\\
  N^{s,t}_J(F)&:=&F+\nabla\phi(F)+st(J+\nabla \partial_J^t\phi(F)).
\end{eqnarray*}
With this notation we may rephrase identity \eqref{d2vhomgh} as follows: For all $I\in\R^{d\times d}$ and $t\in\R$ we have
\begin{eqnarray*}
  D^2V_\ho(F)[G,H]&=&\int_Y D^2V(y,N_I^{s=0,t}(F))[\Psi_G(F),\Psi_H(F)]\,dy,\\
  D^2V_\ho(F+tI)[G,H]&=&\int_Y D^2V(y,N_I^{s=1,t}(F))[\Psi_G(F+tI),\Psi_H(F+tI)]\,dy.
\end{eqnarray*}
Hence, a Taylor expansion yields
\begin{align}\label{reg:whom:3a}
&\tfrac1t(D^2 V_\ho(F+tI)[G,H] - D^2 V_\ho(F)[G,H])\notag\\
&=\int_Y\int_0^1D^3V(y,N_I^{s,t}(F))[I+\nabla\partial_I^t\phi(F),\Psi_G(F+tI),\Psi_H(F+tI)]\,ds\,dy\notag\\
&\quad+\underbrace{\frac1t\int_Y\langle \hat{\mathbb L}_F(y)\Psi_G(F+tI),\Psi_H(F+tI)\rangle-\langle \hat{\mathbb L}_F(y)\Psi_G(F),\Psi_H(F)\rangle\,dy}_{=:r(t)}.
\end{align}
Thanks to $\phi\in C^1(U_{\bar{\bar\rho}},W_\per^{1,\infty}(Y))$, we have for every $J,K\in\R^{d\times d}$ and $s\in[0,1]$:
\begin{equation}\label{L:DGlimits}
\begin{split}
 &\lim_{t\to0}\|N_J^{s,t}(F)-(F+\nabla\phi(F))\|_{L^\infty(Y)}=0,\\
 &\lim_{t\to0}\|\partial_J^t\phi(F)-\partial_J\phi(F)\|_{W^{1,\infty}(Y)}=0,\\
 &\lim_{t\to0}\|\partial_J\phi(F+tK)-\partial_J\phi(F)\|_{W^{1,\infty}(Y)}=0.
\end{split}
\end{equation}
We conclude that the first integral on the right-hand side in \eqref{reg:whom:3a} converges to $\partial_{G,H,I}^3V_\ho(F)$ as $t\to0$ and it is left to show that $\lim_{t\to0}r(t)=0$. 
\smallskip

For the argument set $\psi_t:=\frac{1}{t}(\partial_G\phi(F+tI)-\partial_G\phi(F))$, so that $t\nabla\psi_t=\Psi_G(F+tI)-\Psi_G(F)$. Using \eqref{eq:dphi1} in the weak form of
\begin{equation*}
  \int_Y\langle \hat{\mathbb L}_F\Psi_G(F),\Psi_H(F+tI)-\Psi_H(F)\rangle=0,
\end{equation*}
we obtain
\begin{eqnarray*}
  r(t)&=&\frac{1}{t}\int_Y\langle \hat{\mathbb L}_F\Psi_G(F+tI),\Psi_H(F+tI)\rangle-
  \langle \hat{\mathbb L}_F\Psi_G(F),\Psi_H(F)\rangle\\
  &=&\frac{1}{t}\int_Y\langle \hat{\mathbb L}_F\big(\Psi_G(F+tI)-\Psi_G(F)\big),\Psi_H(F+tI)\rangle+\frac{1}{t}\int_Y\langle \hat{\mathbb L}_F\Psi_G(F),\Psi_H(F+tI)-\Psi_H(F)\rangle\\
  &=&\int_Y\langle \hat{\mathbb L}_F\nabla\psi_t,\Psi_H(F+tI)\rangle = \int_Y\langle \hat{\mathbb L}_F\Psi_H(F+tI),\nabla\psi_t\rangle,
\end{eqnarray*}
where the last identity holds by symmetry of $\hat{\mathbb L}_F$. 
To conclude Substep~2.1, it suffices to show that there exists $\psi\in H^1_{\per,0}(Y)$ s.t.
\begin{equation}\label{dphi:d2}
  \psi_t\wto \psi\quad\mbox{ weakly in $H^{1}(Y)$ as $t\to0$.}
\end{equation}
Indeed, by \eqref{L:DGlimits} we have $\Psi_H(F+tI)\to H+\nabla\partial_H\phi(F)$ strongly in $L^2(Y)$, and thus \eqref{dphi:d2} and \eqref{eq:dphi1} imply
\begin{eqnarray*}
  \lim\limits_{t\to 0}r(t)&=&\int_Y\langle \hat{\mathbb L}_F\Psi_H(F),\nabla\psi\rangle=0.
\end{eqnarray*}
In the remainder of this substep we prove \eqref{dphi:d2} where $\psi$ is given as the unique weak solution in $H_{\per,0}^1(Y)$ to
\begin{equation}\label{d2phi}
 -\divv \hat{\mathbb L}_F \nabla \psi=\divv D^3V(\cdot, F+\nabla \phi(F))[\Psi_I(F),\Psi_G(F)].
\end{equation}
Above (and below) we understand $D^3V(A)[B,C]$ as the $d\times d$-matrix with entries $\Big(D^3V(A)[B,C]\Big)_{ij}=D^3V(A)[B,C,e_i\otimes e_j]$. To see \eqref{dphi:d2} note that thanks to \eqref{eq:dphi1} we have
\begin{equation}\label{L:DGstep2}
 \begin{split}
   0=&\int_Y \langle \hat{\mathbb L}_{F+tI}\Psi_G(F+tI),\nabla\eta\rangle=\int\langle \hat{\mathbb L}_F\Psi_G(F),\nabla\eta\rangle,
 \end{split}
\end{equation}
for all $\eta\in H^1_\per(Y)$. Hence, a Taylor expansion yields
\begin{align}\label{eq:dg2taylor}
  0\stackrel{\eqref{L:DGstep2}}{=}&\frac1t\int_Y \langle \hat{\mathbb L}_{F+tI}(y)\Psi_G(F+tI),\nabla \eta\rangle\,dy\notag\\
  =&\frac1t\int_Y \langle \hat{\mathbb L}_F(y)\Psi_G(F+tI),\nabla \eta \rangle\,dy\notag\\
  &+\int_Y\int_0^1D^3V(y,N_I^{s,t}(F))[I+\nabla \partial_I^t\phi(F),\Psi_G(F+tI),\nabla\eta]\,ds\,dy\notag\\
  \stackrel{\eqref{L:DGstep2}}{=}&\int_Y \langle \hat{\mathbb L}_F(y)\nabla\psi_t,\nabla \eta \rangle\,dy\notag\\
  &\quad+\int_Y\int_0^1D^3V(y,N_I^{s,t}(F))[I+\nabla \partial_I^t\phi(F),\Psi_G(F+tI),\nabla\eta]\,ds\,dy,
\end{align}
Combined with \eqref{d2phi} we obtain
\begin{equation*}
  -\divv \hat{\mathbb L}_F\nabla(\psi_t-\psi)=\divv f_t
\end{equation*}
with right-hand side
\begin{align*}
 f_t:=&\int_0^1D^3V(\cdot,N_I^{s,t}(F))[I+\nabla \partial_I^t\phi(F),\Psi_G(F+tI)]\,ds\\
 &- D^3V(\cdot,F+\nabla \phi(F))[\Psi_I(F),\Psi_G(F)].
\end{align*}
Since $V$ is $C^3$, \eqref{L:DGlimits} implies that $f_t\to 0$ in $L_{\per}^\infty(Y)$, and thus \eqref{dphi:d2} (using that $\hat {\mathbb L}_F$ is uniformly elliptic).
\smallskip
\color{black}

\substep{2.2} 
It is left to show that the partial derivatives, given by $\partial_{G,H,I}^3V_\ho$ are continuous in $U_{\bar{\bar\rho}}$. Since $F\mapsto \phi(F)$ is in $C^1(U_{\bar{\bar\rho}},W_{\per,0}^{1,\infty})$ and $\partial_G\phi(F)=D\phi(F)[G]$ (cf.~Lemma~\ref{L:Dphi} and \ref{L:convexcorrector}), we have for every $F\in U_{\bar{\bar\rho}}$, $G\in \R^{d\times d}$ and $(F_m)\subset \R^{d\times d}$ with $\lim_{m\to\infty} F_m=0$ that 
\begin{equation*}
 \lim_{m\to\infty}(\|\phi(F_m)-\phi(F)\|_{W^{1,\infty}(Y)}+\|\partial_G\phi(F_m)-\partial_G\phi(F)\|_{W^{1,\infty}(Y)})=0.
\end{equation*}
Hence, from $V(y,\cdot)\in C^3(\R^{d\times d},\R)$ for almost every $y\in Y$ we deduce that $\partial_{G,H,I}^3V_\ho\in C(U_{\bar{\bar\rho}})$ and thus $V_\ho\in C^3(U_{\bar{\bar\rho}})$.

\step{3} Single-cell formula; proof of (a).

In this step we prove that for all $F\in U_{\bar{\bar\rho}}$ we have
\begin{equation}\label{pf:onecell}
  W_\ho(F)=W\super1_\ho(F)=\int_YW(y,F+\nabla\phi(F))\,dy=V_\ho(F)-\mu\det(F),
\end{equation}
where $\phi(F)$ denotes the corrector associated with $V$. We start our argument with the claim that
\begin{equation}\label{WhgeqVh}
 W_\ho(F)\geq V_\ho(F)-\mu\det F\quad\mbox{for all $F\in\R^{d\times d}$.}
\end{equation}
Indeed, for $k,m\in\N$, we find $\phi\in W_\per^{1,p}(kY)$ such that 
$$ W_\ho\super{k}(F)+\tfrac1m\geq\fint_{kY}W(y,F+\nabla \phi(y))\,dy.$$
From \eqref{WgeqV} in Corollary~\ref{C:wv}, we learn that the right-hand side is bounded below by
\begin{align*}
  \fint_{kY}V(y,F+\nabla \phi)\,dy-\mu\fint_{kY}\det (F+ \nabla \phi(y))\,dy.
\end{align*}
Since $\det(\cdot)$ is a Null-Lagrangian and $\phi\in W_\per^{1,p}(kY)$ with $p\geq d$, we deduce that $\fint_{kY}\det (F+ \nabla \phi(y))\,dy=\det(F)$ (see e.g. \cite[Theorem 8.35]{Dac07}). Hence, we get 
\begin{align*}
 W_\ho\super{k}(F)+\tfrac1m\geq\inf_{\psi\in H_\per^1(kY)}\fint_{kY}V(y,F+\nabla\psi)\,dy-\mu\det(F).
\end{align*}
By convexity and quadratic growth of $V$, we deduce that the infimum over $H_\per^1(kY)$ can be replaced by an infimum over $H_\per^1(Y)$, see \cite[Lemma~4.1]{Mueller87}. Hence, we arrive at
$$W_\ho\super{k}(F)+\tfrac1m\geq V_\ho(F)-\mu\det (F).$$
By first taking the limit $m\to\infty$ and then the infimum over $k\in\N$, we obtain \eqref{WhgeqVh}.
\smallskip

Next, fix $F\in U_{\bar{\bar\rho}}$ and recall that the corrector $\phi(F)$ associated with the convex potential $V$ satisfies
$$V_\ho(F)=\int_Y V(y,F+\nabla \phi(F))\,dy\quad\mbox{and}\quad \|\dist(F+\nabla \phi,\SO d)\|_{L^\infty(Y)}<\delta.$$
Hence, we get
\begin{align*}
 W_\ho(F)&\leq W_\ho^{(1)}(F)\leq \int_YW(y,F+\nabla \phi(F))\,dy\\
 &\stackrel{\eqref{W=V}}{=}\int_YV(y,F+\nabla \phi(F))-\mu\det(F+\nabla \phi(F))\,dy=V_\ho(F)-\mu\det (F)\stackrel{\eqref{WhgeqVh}}{\leq}W_\ho(F),\notag
\end{align*}
and thus \eqref{pf:onecell}.

\step 4 Regularity and quadratic expansion of $W_{\ho}$; proof of (c).

From $V_\ho\in C^3(U_{\bar{\bar\rho}})$, cf. Step 2, identity \eqref{pf:onecell}, and the smoothness of $F\mapsto \det(F)$, we deduce that $W_\ho\in C^3(U_{\bar{\bar\rho}})$. We also obtain for all $F\in U_{\bar{\bar\rho}}$ and $G\in\R^{d\times d}$ the identities
\begin{eqnarray*}
  DW_{\ho}(F)[G]&=&DV_\ho(F)[G]-\mu D\det(F)[G],\\
  D^2W_{\ho}(F)[G,G]&=&D^2V_\ho(F)[G,G]-\mu D^2\det(F)[G,G].
\end{eqnarray*}
Combined with the formula for $DV_\ho$, the Lipschitz bound \eqref{ref:convexcorrector2a}, and the matching property \eqref{W=V}, we obtain
\begin{align}\label{pf:expansion1}
  &DW_{\ho}(F)[G] = \int_YDV(y,F+\nabla\phi(F))[G]-\mu D\det(F)[G]\\\notag
                &=\int_YDW(y,F+\nabla\phi(F))[G]+\mu \underbrace{\big(\int_YD\det(F+\nabla\phi(F))[G]-D\det(F)[G]\big)}_{=:r_1}.
\end{align}
In analogy to \eqref{def:L} we set $\mathbb L_F(y):=D^2W(y,F+\nabla \phi(y,F))$, and note that \eqref{ref:convexcorrector2a} and \eqref{W=V} yield
\begin{equation}\label{pf:LhatL}
  \mathbb L_F(y)=\hat{\mathbb L}_F(y)-\mu D^2\det(F+\nabla \phi(y,F)).
\end{equation}
From the formula for $D^2V_\ho$ (cf. \eqref{d2vhomgh}), we deduce that
\begin{align}
  \label{pf:expansion2}
  &D^2W_{\ho}(F)[G,G] = D^2V_{\ho}(F)[G,G]-\mu D^2\det(F)[G,G]\\\notag
  &=\int_Y \langle \hat{\mathbb L}_F\big(G+\nabla\partial_G\phi(F)\big),G+\nabla\partial_G\phi(F)\rangle-\mu D^2\det(F)[G,G]\\\notag
  &=\int_Y \langle \mathbb L_F\big(G+\nabla\partial_G\phi(F)\big),G+\nabla\partial_G\phi(F)\rangle\\\notag
  &\quad+\mu\underbrace{\big(\int_Y D^2\det(F+\nabla\phi(F))[G+\nabla\partial_G\phi(F),G+\nabla\partial_G\phi(F)]-D^2\det(F)[G,G]\big)}_{=:r_2}.
\end{align}
Next, we argue that $r_1=r_2=0$ follows the fact that the determinant is a Null-Lagrangian. We only present the argument for $r_2=0$ (the argument for $r_1=0$ is simpler).
It suffices to show that for every $F,G\in\R^{d\times d}$, and $\psi\in H_\per^1(Y)$ it holds
\begin{equation}\label{C:d2whom1}
 \int_Y D^2\det(F+\nabla\phi(F))[G+\nabla \psi,G+\nabla\psi]=D^2\det(F)[G,G].
\end{equation}
Since, the left-hand side in \eqref{C:d2whom1} is continuous in $\psi$ (with respect to strong convergence in $H^1(Y)$), it suffices to consider $\psi\in C_\per^\infty(Y)$. In that case we have (exploiting the fact that $\det(\cdot)$ is a Null-Lagrangian): 
\begin{align*}
 &\frac1t\frac1s(\det(F+(s+t)G)-\det(F+sG)-tD\det(F)[G])\\
 &=\frac1t\frac1s\int_Y\big(\det(F+\nabla\phi(F)+(s+t)(G+\nabla\psi))-\det(F+\nabla\phi(F)+s(G+\nabla\psi))\\
 &\qquad\qquad-tD\det(F+\nabla\phi(F))[G+\nabla\psi]\big).
\end{align*}
By sending successively $t\to0$ and $s\to0$, we obtain \eqref{C:d2whom1}.

From $r_1=0$ and \eqref{pf:expansion1}, we directly obtain the asserted identity for $DW_{\ho}(F)$. The argument for the asserted identity for $D^2W_{\ho}(F)$ is as follows:
\begin{align*}
  &\inf_{\psi\in H^1_{\per}(Y)}\int_Y\langle \hat{\mathbb L}_F(G+\nabla\psi),G+\nabla\psi\rangle\\
  &\stackrel{\eqref{pf:LhatL}}{=}\, 
    \inf_{\psi\in H^1_{\per}(Y)}\left\{\int_Y\langle {\mathbb L}_F(G+\nabla\psi),G+\nabla\psi\rangle-\mu D^2\det(F+\nabla\phi(F))[G+\nabla\psi,G+\nabla\psi]\right\}\\
  &\stackrel{\eqref{C:d2whom1}}{=}\,                                          
    \inf_{\psi\in H^1_{\per}(Y)}\left\{\int_Y\langle {\mathbb L}_F(G+\nabla\psi),G+\nabla\psi\rangle\right\}-\mu D^2\det(F)[G,G]\\
  &\stackrel{\eqref{d2vhominf}}{=}\,D^2V_\ho(F)[G,G]-\mu D^2\det(F)[G,G]\stackrel{\eqref{pf:expansion2},r_2=0}{=}D^2W_\ho(F)[G,G].
\end{align*}

\step 5 Uniqueness and regularity of the corrector; proof of (b).
\smallskip

Let $F\in U_{\bar{\bar\rho}}$. From \eqref{pf:onecell} we learn that
\begin{equation}\label{Whcor}
  W_\ho(F)=\int_YW(y,F+\nabla\tilde\phi)\,dy
\end{equation}
is satisfied for $\tilde\phi=\phi(F)$. If $\tilde\phi\in W^{1,p}_{\per,0}(Y)$ denotes another function satisfying \eqref{Whcor}. Then 
\begin{eqnarray*}
  W_\ho(F)&\stackrel{\eqref{Whcor}}{=}&\int_YW(y,F+\nabla\tilde\phi)\,dy\stackrel{\eqref{WgeqV}}{\geq}\int_Y V(y,F+\nabla\tilde\phi)-\mu\det(F)\\
  &\geq& V_\ho(F)-\mu\det F\stackrel{\eqref{pf:onecell}}{=}W_\ho(F).
\end{eqnarray*}
Hence, $\tilde\phi$ is a minimizer of $\psi\mapsto \int_YV(y,F+\nabla\psi)$. By strong convexity of $V$, minimizers are unique and we deduce that $\tilde\phi=\phi(F)$. This proves uniqueness of $\phi(F)$. For the asserted regularity of $\phi(F)$ see \eqref{ref:convexcorrector2a}.

\smallskip

\step{6} Strong rank-one convexity of $W_{\ho}$; proof of (d).

Statement (d) is a consequence of the strong $\beta$-convexity of $V$ and the fact that the determinant is a Null-Lagrangian and thus rank-one affine. Indeed, \eqref{d2vhominf} and $V(y,\cdot)\in\mathcal V_\beta$ for almost every $y\in Y$ yield
$$\beta|G|^2\leq D^2V_\ho(F)[G,G]\quad\mbox{for all $F,G\in\R^{d\times d}$}.$$
Hence, the claim \eqref{limit:nondegenerate} follows by $D^2W_\ho(F)[G,G]=D^2V_\ho(F)[G,G]-\mu D^2\det(F)[G,G]$ for all $F\in U_{\bar{\bar\rho}}$ and $G\in\R^{d\times d}$ and $D^2\det(F)[a\otimes b,a\otimes b]=0$ for all $F\in\R^{d\times d}$ and $a,b\in\R^d$. 
\qed

\section{Homogenization error}\label{sec:hom}

Throughout this section we suppose that
\begin{itemize}
\item $W$ satisfies Assumptions~\ref{ass:W:1} for some $\alpha>0$ and $p\geq d$.
\end{itemize}
By appealing to the results of the previous sections, we can find constants $\beta,\delta,\mu,\bar{\bar\rho}>0$, $q>d$  (which are from now on fixed) such that the following properties hold:
\begin{itemize}
\item There exists a matching convex lower bound $V$ satisfying the properties of Corollary~\ref{C:wv} with parameters $\beta,\delta,\mu>0$. (Existence of $V$ follows by Corollary~\ref{C:wv}).
\item The map $F\mapsto (\phi(F),\sigma(F))$, where $(\phi,\sigma)$ denotes the extended corrector associated with $V$, is of class  $C^1(U_{\bar{\bar\rho}},W_{\per,0}^{2,q}(Y))$. (This can be achieved by appealing to Lemma~\ref{L:convexcorrector}).
\item The conclusion of Theorem~\ref{T:1cell} holds for the parameter $\bar{\bar\rho}$. In particular, we have $W_{\ho}(F)=\int_YW(y,F+\nabla\phi(F))$ for $F\in U_{\bar{\bar\rho}}$.
\item The convex homogenized integrand satisfies $V_\ho\in C^2(\R^{d\times d})$ and $\beta |G|^2\leq D^2V_\ho(F)[G,G]\leq \frac1\beta |G|^2$, for every $F,G\in \R^{d\times d}$. (See Step~2 in the proof of Theorem~\ref{T:1cell}, and \eqref{d2vhominf}).
\end{itemize}
Finally, we suppose that 
\begin{itemize}
\item $A$ is a bounded domain of class $C^2$.
\end{itemize}
We consider the energy functionals
\begin{eqnarray*}
  \mathcal E_{\e}:H^1(A)\to(-\infty,\infty),\qquad\mathcal E_\e(u)&:=&\int_A V\left(\frac{x}{\e},\nabla u(x)\right)-f(x)\cdot u(x)\,dx,\\
  \mathcal I_{\e}:H^1(A)\to(-\infty,\infty),\qquad\mathcal I_\e(u)&:=&\int_A W\left(\frac{x}{\e},\nabla u(x)\right)-f(x)\cdot u(x)\,dx,
\end{eqnarray*}
and the associated homogenized functionals
\begin{eqnarray}
  \mathcal E_{\hom}:H^1(A)\to(-\infty,\infty),\qquad \mathcal E_{\hom}(u)&:=&\int_A V_{\hom}(\nabla u(x))-f(x)\cdot u(x)\,dx,\label{ene:convexhom}\\
  \mathcal I_{\hom}: W^{1,p}(A)\to (-\infty,\infty],\qquad\mathcal I_{\hom}(u)&:=&\int_A W_{\hom}(\nabla u(x))-f(x)\cdot u(x)\,dx.\label{ene:nonconvexhom}
\end{eqnarray}
Our main result in this section is the upcoming quantitative two-scale expansion. Theorem~\ref{T:NC:intro} is a special case of it. To ease the presentation, throughout this section we write $\lesssim$ whenever it holds $\leq$ up to a multiplicative constant only depending on $\alpha,\beta,\delta,\mu$ and $A$, and we write $a \leq C(b)c$ whenever $a\leq b$ holds up to a positive multiplicative constant $C(b)$  which depends on $b$ only.

\begin{theorem}[Homogenization error - non-convex case]\label{T:NC}
Fix $r>d$. Then there exists $\bar \rho\ll 1$  such that following statement holds:
\smallskip

If the data $f\in L^r(A)$ and  $g\in W^{2,r}(A)$ is small in the sense of
\begin{equation*}
 \Lambda(f,g,g_0):=\|f\|_{L^r(A)}+\|g-g_0\|_{W^{2,r}(A)}+\|\dist(\nabla g_0,\SO d)\|_{L^\infty(A)}<\bar\rho,
\end{equation*}
where $g_0\in W^{2,r}(\R^d)$ is assumed to satisfy
\begin{equation}\label{NC:lin_bc}
 -\divv(DW_{\hom}(\nabla g_0))=0\qquad\text{in }\mathcal D'(A),
\end{equation}
(e.g. $g_0(x)=x$),  then:
\begin{enumerate}[(a)]
 \item The functional \eqref{ene:nonconvexhom} admits a unique minimizer $u_0\in g+ W_0^{1,p}(A)$. It satisfies the nonlinear estimates
 \begin{subequations}
  \begin{eqnarray}
   \label{NC:NL:1}
   \|\dist(\nabla u_0,\SO d)\|_{L^\infty(A)}&<& \bar{\bar \rho},\\
   \label{NC:NL:2}
   \|\dist(\nabla u_0,\SO d)\|_{L^\infty(A)}+\|u_0-g_0\|_{W^{2,r}(A)}&\lesssim&C(\bar \rho)\Lambda(f,g,g_0).
  \end{eqnarray}
 \end{subequations}
 \item $u_0$ is characterized as the unique minimizer in $g+H_0^1(A)$ of the auxiliary functional \eqref{ene:convexhom}.
 \item For any $w_\e\in g+W_0^{1,p}(A)$ and every $\e\in(0,1)$ it holds
    \begin{eqnarray*}
      &&\|w_\e-u_0\|_{L^2(A)}+\|w_\e-(u_0+\e\nabla\phi(\tfrac{\cdot}\e,\nabla u_0))\|_{H^1(A)} \\
       &&\lesssim C(\bar\rho)\left(\e^\frac12\Lambda(f,g,g_0) + \e(1+\|\nabla^2g_0\|_{L^r(A)}^\frac{r}{r-d})(\|\nabla^2g_0\|_{L^r(A)}+\Lambda(f,g,g_0) \right)\\
       &&\qquad +\left(\mathcal I_\e(w_\e)-\inf_{g+W_0^{1,p}} \mathcal I_\e\right)^\frac12.
    \end{eqnarray*}
  \end{enumerate}
\end{theorem}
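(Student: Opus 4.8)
\emph{Overall strategy.} The plan is to reduce every assertion to the \emph{convex} auxiliary problems $\mathcal E_\e,\mathcal E_{\hom}$ associated with the matching convex lower bound $V$ of Corollary~\ref{C:wv}, using two structural facts. First, since $\det$ is a Null-Lagrangian and $p\ge d$, the number $C_g:=\int_A\det\nabla u\,dx=\int_A\det\nabla g\,dx$ is the same for every $u\in g+W_0^{1,p}(A)$ (cf.~\cite[Theorem~8.35]{Dac07}); hence on the set where the gradients lie in $U_{\bar{\bar\rho}}$ (w.l.o.g.\ $\bar{\bar\rho}\le\delta$, so that the matching identities \eqref{W=V} and \eqref{pf:onecell} apply there) the functionals $\mathcal I_\e$ and $\mathcal E_\e$ (resp.\ $\mathcal I_{\hom}$ and $\mathcal E_{\hom}$) differ only by the constant $\mu C_g$. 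Second, away from that set one still has the one-sided bounds \eqref{WgeqV} and \eqref{WhgeqVh}, so that $\mathcal I_\e(u)+\mu C_g\ge\mathcal E_\e(u)$ and $\mathcal I_{\hom}(u)+\mu C_g\ge\mathcal E_{\hom}(u)$ for \emph{every} admissible $u$, with equality whenever the gradient lies in $U_{\bar{\bar\rho}}$ a.e. The quantitative input on the convex side — well-posedness, $W^{2,r}$-regularity, smallness of $\|\dist(\nabla\cdot,\SO d)\|_{L^\infty}$, and the two-scale estimate — is exactly what Proposition~\ref{P:C} provides under the present smallness hypothesis; it is invoked as a black box.

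\emph{Proof of (a) and (b).} Since $\|\dist(\nabla g_0,\SO d)\|_{L^\infty}<\bar\rho\le\bar{\bar\rho}$ and $g_0$ solves \eqref{NC:lin_bc}, the identity $W_{\hom}=V_{\hom}-\mu\det$ on $U_{\bar{\bar\rho}}$ together with the Piola identity $\divv\operatorname{cof}\nabla g_0=0$ shows that $g_0$ is also an equilibrium for the strongly monotone operator $DV_{\hom}$. Applying Proposition~\ref{P:C} to the (non-oscillating) convex functional $\mathcal E_{\hom}$ with $g_0$ as reference equilibrium, I obtain for $\bar\rho$ small a unique minimizer $u_0\in g+H_0^1(A)$ of $\mathcal E_{\hom}$, characterized by $-\divv DV_{\hom}(\nabla u_0)=f$, lying in $W^{2,r}(A)$, and satisfying \eqref{NC:NL:1}--\eqref{NC:NL:2} with $\nabla u_0\in U_{\bar{\bar\rho}}$ a.e. Then $\mathcal I_{\hom}(u_0)=\mathcal E_{\hom}(u_0)-\mu C_g$, so for every $u\in g+W_0^{1,p}(A)$,
\[
 \mathcal I_{\hom}(u_0)=\mathcal E_{\hom}(u_0)-\mu C_g\le\mathcal E_{\hom}(u)-\mu C_g\le\mathcal I_{\hom}(u);
\]
thus $u_0$ minimizes $\mathcal I_{\hom}$, and if $\tilde u$ were another minimizer the same chain would force $\mathcal E_{\hom}(\tilde u)\le\min\mathcal E_{\hom}$, whence $\tilde u=u_0$ by strict convexity of $\mathcal E_{\hom}$. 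This proves (a) and (b) at once.

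\emph{Proof of (c).} Applying Proposition~\ref{P:C} now to the oscillating convex functional $\mathcal E_\e$ yields, for $\bar\rho$ small and uniformly in $\e$, a unique minimizer $u_\e^c\in g+H_0^1(A)$ with $-\divv DV(\tfrac\cdot\e,\nabla u_\e^c)=f$, with $\|\dist(\nabla u_\e^c,\SO d)\|_{L^\infty(A)}<\bar{\bar\rho}$, and with the two-scale estimate $\|u_\e^c-u_0\|_{L^2(A)}+\|u_\e^c-(u_0+\e\phi(\tfrac\cdot\e,\nabla u_0))\|_{H^1(A)}\lesssim C(\bar\rho)\big(\e^{\frac12}\Lambda(f,g,g_0)+\e(1+\|\nabla^2g_0\|_{L^r}^{r/(r-d)})(\|\nabla^2g_0\|_{L^r}+\Lambda(f,g,g_0))\big)$, i.e.\ the first two groups of terms on the right of (c). By the matching identity, $\nabla u_\e^c\in U_{\bar{\bar\rho}}$ a.e.\ gives $\mathcal E_\e(u_\e^c)=\mathcal I_\e(u_\e^c)+\mu C_g$; combined with $\mathcal E_\e(u)\le\mathcal I_\e(u)+\mu C_g$ for all $u\in g+W_0^{1,p}(A)$, this shows that $u_\e^c$ also minimizes $\mathcal I_\e$ over $g+W_0^{1,p}(A)$ and that $\inf_{g+W_0^{1,p}}\mathcal I_\e+\mu C_g=\mathcal E_\e(u_\e^c)=\inf\mathcal E_\e$. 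Now let $w_\e\in g+W_0^{1,p}(A)$ be arbitrary (if $\mathcal I_\e(w_\e)=\infty$ there is nothing to prove). Integrating the strong-convexity inequality \eqref{ineq:strong-conv2} over $A$ with $F=\nabla u_\e^c$, $G=\nabla w_\e$, using the Euler--Lagrange equation for $u_\e^c$ tested with $w_\e-u_\e^c\in W_0^{1,p}(A)\subset H_0^1(A)$, and Poincar\'e's inequality, I get
\[
 \|w_\e-u_\e^c\|_{L^2(A)}+\|w_\e-u_\e^c\|_{H^1(A)}\lesssim\big(\mathcal E_\e(w_\e)-\mathcal E_\e(u_\e^c)\big)^{\frac12}\le\Big(\mathcal I_\e(w_\e)-\inf_{g+W_0^{1,p}}\mathcal I_\e\Big)^{\frac12},
\]
the last step using $\mathcal E_\e(w_\e)\le\mathcal I_\e(w_\e)+\mu C_g$ and $\mathcal E_\e(u_\e^c)=\inf\mathcal I_\e+\mu C_g$. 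Two triangle inequalities, applied to $w_\e-(u_0+\e\phi(\tfrac\cdot\e,\nabla u_0))=(w_\e-u_\e^c)+(u_\e^c-(u_0+\e\phi(\tfrac\cdot\e,\nabla u_0)))$ and $w_\e-u_0=(w_\e-u_\e^c)+(u_\e^c-u_0)$, then yield (c).

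\emph{On the convex two-scale estimate and the main obstacle.} The two-scale estimate for $u_\e^c$ (the content of Proposition~\ref{P:C} used above) is obtained by the classical energy method adapted to the $F$-dependent corrector: one fixes a cut-off $\eta_\e$ equal to $1$ outside an $\e$-neighborhood of $\partial A$, vanishing on $\partial A$, with $|\nabla\eta_\e|\lesssim\e^{-1}$, sets $z_\e:=u_0+\e\eta_\e\phi(\tfrac\cdot\e,\nabla u_0)\in g+H_0^1(A)$ boundary data, and tests the difference of the Euler--Lagrange equations for $u_\e^c$ and for $u_0$ with $u_\e^c-z_\e$; strong monotonicity of $DV$ bounds $\|\nabla(u_\e^c-z_\e)\|_{L^2}^2$ from below, and on the right one expands $DV(\tfrac\cdot\e,\nabla z_\e)$ around $DV(y,\nabla u_0+\nabla\phi(y,\nabla u_0))\big|_{y=\cdot/\e}=J(\nabla u_0)(\tfrac\cdot\e)+DV_{\hom}(\nabla u_0)$, cf.~\eqref{def:J}, \eqref{d2vhomgh}. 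The $DV_{\hom}(\nabla u_0)$-part cancels against $f$; the oscillating divergence-free part $J(\nabla u_0(x))(\tfrac x\e)$ is handled with the flux corrector $\sigma$ and its skew-symmetry \eqref{prop:sigma1}--\eqref{prop:sigma2}, gaining a prefactor $\e$ at the cost of differentiating $x\mapsto\sigma(\nabla u_0(x))(\tfrac x\e)$; the boundary-layer terms produce $O(\e^{\frac12})$ because $\phi(F)\in W^{1,\infty}(Y)$ and $F\mapsto\phi(F)\in C^1(U_{\bar{\bar\rho}},W^{1,\infty}(Y))$ (Corollary~\ref{C:corrector}, $q>d$); and the chain-rule remainders $\e\,D_F\phi(\nabla u_0)[\nabla^2u_0]$, $\e\,D_F\sigma(\nabla u_0)[\nabla^2u_0]$ are put into $L^2(A)$ via $u_0\in W^{2,r}(A)$, $r>d$, and H\"older's inequality, which accounts for the factor $\|\nabla^2g_0\|_{L^r}^{r/(r-d)}$ (using \eqref{NC:NL:2}). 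I expect the genuinely delicate points to be (i) the uniform-in-$\e$ solvability of the oscillating convex problem together with the $L^\infty$-closeness $\|\dist(\nabla u_\e^c,\SO d)\|_{L^\infty}<\bar{\bar\rho}$, which cannot be obtained by scalar/variational regularity and therefore forces the smallness of the data, and (ii) the bookkeeping of the $x$-dependence of $\phi(\nabla u_0(x))$ and $\sigma(\nabla u_0(x))$ — in particular carrying out the skew-symmetry manipulation of the $\sigma$-term against a merely $H_0^1$ test function, which needs a routine regularization.
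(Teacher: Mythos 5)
There is a genuine gap, and it sits at the heart of your argument for (c). You let $u_\e^c$ be the minimizer of the oscillating convex functional $\mathcal E_\e$ and claim that Proposition~\ref{P:C} gives $\|\dist(\nabla u_\e^c,\SO d)\|_{L^\infty(A)}<\bar{\bar\rho}$ uniformly in $\e$, from which you deduce $\mathcal E_\e(u_\e^c)=\mathcal I_\e(u_\e^c)+\mu C_g$ and hence $\inf\mathcal E_\e=\inf\mathcal I_\e+\mu C_g$. Proposition~\ref{P:C} contains no such statement: it assumes $L^\infty$-closeness of $\nabla u_0$ (the \emph{homogenized} minimizer, cf.~\eqref{c:ass:u0sod}) and gives only an $H^1$ two-scale estimate for $u_\e^c$. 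A uniform-in-$\e$ $L^\infty$ gradient bound for the minimizer of a nonlinear monotone \emph{system} with coefficients oscillating at scale $\e$ is a large-scale Lipschitz regularity statement that the paper neither proves nor uses; smallness of the data does not produce it (an implicit-function-theorem argument in $W^{2,r}$ degenerates as $\e\downarrow0$ because the corrector contributes $\nabla^2$-terms of order $\e^{-1}$). Note also that your claim would imply that $\mathcal I_\e$ attains its infimum, which the paper explicitly refrains from asserting. Without this bound your chain breaks exactly where you need $\inf\mathcal E_\e\geq\inf\mathcal I_\e+\mu C_g$: the matching inequality $V\leq W+\mu\det$ only gives the opposite direction. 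The paper circumvents this by never touching the regularity of $u_\e^c$ (or $w_\e$): it builds a \emph{modified} two-scale expansion $\bar v_\e$ from a Lipschitz truncation of $\nabla u_0$, verifies $\|\dist(\nabla\bar v_\e,\SO d)\|_{L^\infty(A)}<\delta$ by hand together with $\|v_\e-\bar v_\e\|_{H^1(A)}\lesssim\e(\dots)$, and then compares $\mathcal E_\e(w_\e)-\mathcal E_\e(\bar v_\e)\leq\mathcal I_\e(w_\e)-\mathcal I_\e(\bar v_\e)\leq\mathcal I_\e(w_\e)-\inf\mathcal I_\e$, using \eqref{W=V} only at $\nabla\bar v_\e$ and the one-sided bound \eqref{WgeqV} at $\nabla w_\e$. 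Some such surrogate competitor with uniformly controlled gradient is indispensable, and your proposal does not supply it.

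A secondary mis-attribution occurs in (a): you also credit Proposition~\ref{P:C} with the well-posedness of the homogenized problem in $W^{2,r}$ and the nonlinear estimates \eqref{NC:NL:1}--\eqref{NC:NL:2}. Proposition~\ref{P:C} \emph{assumes} $u_0\in H^2$ and \eqref{c:ass:u0sod}; it does not prove them. In the paper these estimates are the content of a separate argument (Step~1 of the proof of (a)): an implicit function theorem applied to $T(f,h,\varphi)=-\divv\big(DV_\ho(\nabla g_0+\nabla h+\nabla\varphi)\big)-f$ around $(0,0,0)$, whose linearization is invertible from $H^1_0\cap W^{2,r}$ to $L^r$ by maximal $L^r$-regularity for the elliptic system with $C^{0,1-d/r}$ coefficients $D^2V_\ho(\nabla g_0)$; this is also where \eqref{NC:lin_bc} and the identity $V_\ho=W_\ho+\mu\det$ on $U_{\bar{\bar\rho}}$ enter (to verify $T(0,0,0)=0$, as you correctly note). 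Once these estimates are in hand, your passage from the convex to the nonconvex homogenized minimizer via the constant $\mu C_g$ is the same as the paper's Step~2 and is fine; but as written, both the nonlinear a priori estimates for $u_0$ and the uniform $L^\infty$ control needed in (c) are asserted rather than proved.
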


\begin{remark}\label{Rem:bending}

The simplest choice for $g_0$ are rigid-motions, i.e. $g_0(x)=Rx+c$ where $R\in\SO d$ and $c\in\R^d$. Indeed, since $W_{\hom}$ is minimized at $\SO d$ the equation \eqref{NC:lin_bc} holds trivially. In this case Theorem~\ref{T:NC} implies that minimizers of $\mathcal I_\ho$ and $\mathcal I_\e$ are close to a rigid motion.  However, the conclusion of Theorem~\ref{T:NC} is valid also in more general situations which ensure that, roughly speaking, minimizers of $\mathcal I_\ho$ are locally close to isometries. In the following, we give a non-trivial example of such a situation, which corresponds to bending of a thin, planar beam: Let $y=(                                                                                                                                                                                                                                                                                                                                                                                                                                                                                                                                                                                                                 y_1,y_2)^T                                                                                                                                                                                                                                                                                                                                                                                                                                                                                                                                                                                                                                                         \in C^\infty(\R,\R^2)$ satisfy $|y'(x_1)|=1$ for all $x_1\in\R$, and consider $\tilde g_0\in C^\infty(\R^2,\R^2)$ defined via $\tilde g_0(x_1,x_2)=y(x_1)+x_2(                                                                                                                                                                                                                                                                                                                                                                                                                                                                                                                                                                                                                                                                                                                                                                                                                                                                                                                                                                                                                                                                                                                                                                                                                                                                                                                                                                                                                                                                                                                                                                                                                                                                                                                                                                                                                                                                                                                                                                                                                                                                                                                                           -y_2'(x_1),y_1'(x_1))^T$. Note that $\nabla \tilde g_0(x_1,x_2)=R(x_1)+x_2M(x_1)$, where $R\in C^\infty(\R,\SO 2)$ and $M\in C^\infty(\R,\R^{2\times 2})$, where $M(x_1)$ has rank-one for all $x_1\in\R$. Set $A_h:=(-1,1)\times (-3h,3h)$ and consider the variational problem 
$$\min\left\{\int_{A_h}V_\ho (\nabla u)\,|\,u\in \tilde g_0+H_0^1(A_h)\right\},$$
where $g_0$ denotes the corresponding unique minimizer. We show that for $h>0$ sufficiently small and $A_h':=(-\frac12,\frac12)\times (-\frac{h}{2},\frac{h}{2})\Subset A_h$, the minimizer $g_0\in \tilde g_0+H_0^1(A_h)$ satisfies
\begin{itemize}
 \item[(a)] $g_0\in W^{2,q}(A_h')$ for some $q>2$,
 \item[(b)] $\divv DW_\ho(\nabla g_0)=0$ on $A_h'$.
\end{itemize}
The higher (interior) differentiability of $g_0$ is standard and can be obtained by the difference quotient method, which yields local $W^{2,2}$ regularity, and an application of a reverse H\"older inequality (see e.g~\cite{GM12}). In particular, we have the following: There exists $q>2$ such that $g_0\in W_\loc^{2,q}(A_h)$ and for every $x\in A_h$ with $Q_{2h}(x)\Subset A_h$, where $Q_h(x):=x+(-h,h)^2$, it holds
\begin{equation}\label{bending:gehring}
 \left(\fint_{Q_{\frac{h}2}(x)}|\nabla^2 g_0|^q\right)^\frac1q\lesssim \left( \fint_{Q_{h}(x)}|\nabla^2 g_0|^2\right)^\frac12\lesssim \frac1h \left(\fint_{Q_{2h}(x)}|\nabla g_0-(\nabla g_0)_{Q_{2h}(x)}|^2 \right)^\frac12,
\end{equation}
where $(\nabla g_0)_{Q}:=\fint_Q \nabla g_0$ and here and for the rest of this remark $\lesssim$ means $\leq$ up to a multiplicative constant which is independent of $h$ and $x$.

Let us now discuss (b). Since $W_\ho=V_\ho-\mu \det$ on $U_{\bar{\bar \rho}}$, $\divv DV_\ho (\nabla g_0)=0$ by the minimiality of $g_0$, and $\divv D\det(\nabla g_0)=0$ since $\det$ is a Null-Lagrangian, it suffices to show $\|\dist(\nabla g_0,\SO 2)\|_{L^\infty(A_h')}<\bar{\bar \rho}$. First, we observe that $\nabla g_0$ is close to the set of rotations in the sense of
\begin{equation}\label{bending:l2}
 \int_{A_h} \dist^2(\nabla g_0,\SO 2)\lesssim h^3.
\end{equation}
Indeed, a combination  of $V_\ho(\nabla \tilde g_0)=W_\ho(\nabla \tilde g_0)-\mu\det(\nabla \tilde g_0)$ on $A_h$ (for $h>0$ sufficiently small), the fact that $DW_{\ho}(R)=0$ for all $R\in \SO d$ and thus  $\|DW_{\ho}(\nabla \tilde g_0)\|_{L^\infty(A_h)}\lesssim \|\dist(\nabla \tilde g_0,\SO d)\|_{L^\infty(A_h)}\lesssim h$, and
\begin{align*}
 0\geq& \int_{A_h}V_\ho(\nabla g_0)-V_\ho(\nabla \tilde g_0)\stackrel{\eqref{ineq:strong-conv2}}{\geq} \int_{A_h}DV_\ho(\nabla \tilde g_0)[\nabla g_0-\nabla \tilde g_0]+\frac\beta2 |\nabla g_0-\nabla \tilde g_0|^2\\
 \geq& \int_{A_h} DW_\ho(\nabla \tilde g_0)[\nabla g_0-\nabla \tilde g_0]+\frac\beta2 |\nabla g_0-\nabla \tilde g_0|^2-\mu\underbrace{\int_A D\det(\nabla \tilde g_0)\nabla g_0-\nabla \tilde g_0]}_{=0},
\end{align*}
implies via Young's inequality
$$ \|\nabla g_0-\nabla \tilde g_0\|_{L^2(A_h)}^2\lesssim h^3.$$
Inequality \eqref{bending:l2} follows since $\|\dist(\nabla \tilde g_0,\SO 2)\|_{L^2(A_h)}^2\lesssim h^3$ by construction. 

Next, we combine \eqref{bending:gehring} with the geometric rigidity estimate (see \cite[Theorem 3.1]{FJM02}) to obtain for all $x\in A_h$ with $Q_{2h}(x)\Subset A_h$ that
\begin{align}\label{bending:3}
 \left(\fint_{Q_{\frac{h}2}(x)}|\nabla^2 g_0|^q\right)^{\frac{1}{q}}\lesssim& \frac1{h} \left(\fint_{Q_{2h}(x)}|\nabla g_0-(\nabla g_0)_{Q_{2h}(x)}|^2\right)^\frac12 \leq \frac1{h} \left(\inf_{R\in\SO 2}\fint_{Q_{2h}(x)}|\nabla g_0-R|^2\right)^\frac12\notag\\
  \lesssim&  \frac1{h^2}\left(\int_{Q_{2h}(x)}\dist^2 (\nabla g_0,\SO 2)\right)^\frac12\leq \frac1{h^2}\left(\int_{A_{h}}\dist^2 (\nabla g_0,\SO 2)\right)^\frac12.
\end{align}
Hence, Morrey's inequality combined with \eqref{bending:l2} and \eqref{bending:3} yields
\begin{equation*}
 \|\nabla g_0-(\nabla g_0)_{Q_{\frac{h}2}(x)}\|_{L^\infty(Q_{\frac{h}2}(x))}\lesssim h^{1-\frac{2}{q}}\|\nabla^2 g_0\|_{L^q(Q_{\frac{h}2}(x))}\lesssim h^\frac12.
\end{equation*}
By geometric rigidity, we find for any $Q_{\frac{h}2(x)}\subset A_h$ a rotation $R_h(x)\in \SO 2$ such that
\begin{equation*}
 |\fint_{Q_{\frac{h}2(x)}}\nabla g_0-R_h(x)|^2\leq \frac{1}{h^2}\int_{Q_{\frac{h}2}(x)}|\nabla g_0-R_h(x)|^2\lesssim\frac1{h^2}\int_{A_h}\dist^2(\nabla g_0,\SO 2)\stackrel{\eqref{bending:l2}}{\lesssim} h,
\end{equation*}
and thus $\|\dist(\nabla g_0,\SO 2)\|_{L^\infty(A_h')}<\bar{\bar\rho}$ for $h>0$ sufficiently small.
\end{remark}

\bigskip

The proof of Theorem~\ref{T:NC} is presented in Section~\ref{S:NC}. At several place we lift results obtained for the convex functionals $\mathcal E_\e$ and $\mathcal E_\ho$ to the level of the non-convex functionals. Therefore, we first discuss the convex case.

\subsection{Convex case}

In this section, we provide a quantitative two-scale expansion for the convex problem $\mathcal E_\e$ for small loads:
\begin{proposition}[Homogenization error - convex case]\label{P:C}
  For every $\bar\rho< \bar{\bar\rho}$ the following statement holds:
  \begin{itemize}
  \item Let $f\in L^2(A)$, and denote by $u_0\in H^2(A)$ the unique minimizer of $\mathcal E_{\hom}$, given in \eqref{ene:convexhom}, subject to its own boundary conditions.
  \item Let $u_\e\in H^1(A)$ be a minimizer of $\mathcal E_\e$ subject to $u_\e-u_0\in H_0^1(A)$.
  \item Let $v_\e$ denote the two-scale expansion,
    \begin{equation*}
      v_\e(x)=u_0(x)+\e\eta(x)\phi(\tfrac{x}{\e},\nabla u_0(x))
    \end{equation*}
    where  $\eta\in C^\infty_c(A)$ denotes an arbitrary cut-off function.
  \end{itemize}
  Assume that $\nabla u_0$ is uniformly close to $\SO d$ in the sense that
  \begin{equation}\label{c:ass:u0sod}
    \|\dist(\nabla u_0,\SO d)\|_{L^\infty(A)}\leq \bar\rho,
  \end{equation}
  then
  \begin{eqnarray*}
     & &|\mathcal E_\e(v_\e) - \mathcal E_\e(u_\e)|+\|u_\e-v_\e\|_{H^1(A)}^2\\
     &\lesssim& C(\bar \rho)\left(\e^2\|\nabla^2u_0\|_{L^2(A)}^2+\int_A\left(\e^2|\nabla\eta|^2+(\eta-1)^2\right)\dist^2(\nabla u_0(x),\SO d)\,dx\right).
    \end{eqnarray*}
\end{proposition}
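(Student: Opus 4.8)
The natural strategy is the classical energy-method argument for quantitative two-scale expansion, adapted to the strongly convex integrand $V$. I would first record that, since $V(y,\cdot)\in\mathcal V_\beta$ uniformly and $\nabla u_0\in L^\infty$, the corrector $\phi(\nabla u_0(x))$ and its first $F$-derivative $\partial_G\phi(\nabla u_0(x))$ are controlled pointwise in terms of $\dist(\nabla u_0(x),\SO d)$ (using $\phi(R)=0$ for $R\in\SO d$ together with the $C^1$ dependence $F\mapsto\phi(F)$ on $U_{\bar{\bar\rho}}$, cf.\ \eqref{ref:convexcorrector2a}--\eqref{ref:convexcorrector2b} and frame indifference). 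The chain rule then gives $\nabla v_\e(x)=\nabla u_0(x)+\nabla_y\phi(\tfrac x\e,\nabla u_0(x))+\e r_\e(x)$ with a remainder $r_\e$ built from $\eta\,D\phi(\nabla u_0)[\nabla^2u_0]$, $(\nabla\eta)\phi(\nabla u_0)$, all estimable in $L^2$ by $\|\nabla^2u_0\|_{L^2}$ and $\int_A(\e^2|\nabla\eta|^2+(\eta-1)^2)\dist^2(\nabla u_0,\SO d)$.

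The core of the argument is the comparison of energies. Because $u_\e$ minimizes $\mathcal E_\e$ subject to $u_\e-u_0\in H^1_0(A)$ and $v_\e$ is an admissible competitor (it has the same boundary values as $u_0$ since $\eta$ is compactly supported), we have $\mathcal E_\e(u_\e)\le\mathcal E_\e(v_\e)$. For the reverse-type bound I would use strong convexity \eqref{ineq:strong-conv2} applied at $\nabla u_\e$:
\begin{equation*}
 \mathcal E_\e(v_\e)-\mathcal E_\e(u_\e)\ge \int_A DV(\tfrac x\e,\nabla u_\e)[\nabla v_\e-\nabla u_\e]-f\cdot(v_\e-u_\e)\,dx+\frac\beta2\|\nabla v_\e-\nabla u_\e\|_{L^2(A)}^2,
\end{equation*}
and the Euler--Lagrange equation for $u_\e$ kills the first integral (since $v_\e-u_\e\in H^1_0(A)$). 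Combined with $\mathcal E_\e(u_\e)\le\mathcal E_\e(v_\e)$ this yields both $|\mathcal E_\e(v_\e)-\mathcal E_\e(u_\e)|\lesssim\mathcal E_\e(v_\e)-\mathcal E_\e(u_\e)$ and $\tfrac\beta2\|\nabla v_\e-\nabla u_\e\|_{L^2}^2\le\mathcal E_\e(v_\e)-\mathcal E_\e(u_\e)$, so it suffices to bound $\mathcal E_\e(v_\e)-\mathcal E_\e(u_\e)$ from above. For that I would instead compare $v_\e$ to $u_0$: write $\mathcal E_\e(v_\e)-\mathcal E_\e(u_\e)\le \mathcal E_\e(v_\e)-\mathcal E_\ho(u_0)+\big(\mathcal E_\ho(u_0)-\mathcal E_\e(u_\e)\big)$, noting the second bracket is $\le 0$ by the standard lower bound $\inf\mathcal E_\e\ge\mathcal E_\ho(u_0)$ (convex homogenization, using $V_\ho=V_\ho^{(1)}$ and that $\det$-type terms are absent here), and then expand $\mathcal E_\e(v_\e)-\mathcal E_\ho(u_0)$ directly.

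The heart of the matter is the pointwise/integrated identity for the energy of the two-scale expansion: using that $\phi(\nabla u_0(x))$ is the cell minimizer,
\begin{equation*}
 \int_A V_\ho(\nabla u_0(x))\,dx=\int_A\fint_Y V(y,\nabla u_0(x)+\nabla_y\phi(y,\nabla u_0(x)))\,dy\,dx,
\end{equation*}
and I would compare this to $\int_A V(\tfrac x\e,\nabla v_\e(x))\,dx$. The difference splits into (i) a ``homogenization defect'' term coming from replacing the $y$-average over $Y$ by evaluation at $y=x/\e$ — this is the term producing the $\e\|\nabla^2u_0\|_{L^2}^2$ scaling and is handled by the flux corrector $\sigma$: one uses \eqref{def:J}, \eqref{prop:sigma1}--\eqref{prop:sigma2} to write the relevant oscillating field in divergence form $\nabla\cdot(\sigma(\cdot)\nabla(\cdots))$ and integrates by parts, gaining a factor $\e$ and paying $\|\nabla^2 u_0\|_{L^2}$ from differentiating $x\mapsto\nabla u_0(x)$; and (ii) error terms of order $\e$ in $L^2$ arising from the remainder $r_\e$ and from the Lipschitz/$C^1$ control of $V$ and $\phi$ near $\SO d$, which are bounded by $\e\|\nabla^2u_0\|_{L^2}$ and $\big(\int_A(\e^2|\nabla\eta|^2+(\eta-1)^2)\dist^2(\nabla u_0,\SO d)\big)^{1/2}$. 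Throughout, the quadratic upper bound in \eqref{vgrowthd2} and \eqref{vgrowthd1} let me absorb cross terms by Young's inequality, at the cost of the $C(\bar\rho)$ constant (which enters through $\|D\phi(F)\|$ on $U_{\bar\rho}$, hence through the closeness assumption \eqref{c:ass:u0sod}). I expect the main obstacle to be the careful bookkeeping in the flux-corrector integration by parts near $\partial A$: $\phi(\nabla u_0)$ and $\sigma(\nabla u_0)$ do not vanish on $\partial A$, which is precisely why the cut-off $\eta$ is introduced, and one must track that all boundary contributions are either killed by $\eta$ or collected into the $\int_A(\e^2|\nabla\eta|^2+(\eta-1)^2)\dist^2(\nabla u_0,\SO d)$ term — using that $\dist(\nabla u_0,\SO d)$ is what measures the size of $\phi(\nabla u_0)$ and $\sigma(\nabla u_0)$, since both vanish on $\SO d$.
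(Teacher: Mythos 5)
Your first reduction is fine and coincides with the paper's: strong convexity of $V$ at $\nabla u_\e$ together with the Euler--Lagrange equation for $u_\e$ gives $\tfrac\beta2\|\nabla v_\e-\nabla u_\e\|_{L^2(A)}^2\le \mathcal E_\e(v_\e)-\mathcal E_\e(u_\e)$, and by minimality of $u_\e$ the energy gap is nonnegative, so everything hinges on bounding $\mathcal E_\e(v_\e)-\mathcal E_\e(u_\e)$ from above. The genuine gap is in how you propose to do that. You split $\mathcal E_\e(v_\e)-\mathcal E_\e(u_\e)\le\big(\mathcal E_\e(v_\e)-\mathcal E_\ho(u_0)\big)+\big(\mathcal E_\ho(u_0)-\mathcal E_\e(u_\e)\big)$ and discard the second bracket as $\le 0$ ``by the standard lower bound $\inf\mathcal E_\e\ge\mathcal E_\ho(u_0)$''. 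No such inequality holds at \emph{fixed} $\e>0$: the lower bound of convex homogenization is only asymptotic ($\Gamma$-liminf), and for a general bounded domain, non-affine data and $f\neq0$ one can have $\inf_{u_0+H^1_0(A)}\mathcal E_\e<\mathcal E_\ho(u_0)$. Already in $d=1$ with $V(y,\xi)=a(y)\xi^2$, Dirichlet data and $A$ incommensurate with $\e\Z$, the $\e$-problem can undershoot the homogenized value by an $O(\e)$ boundary-layer error of the ``wrong'' sign. Quantifying $\mathcal E_\ho(u_0)-\mathcal E_\e(u_\e)$ with an error controlled by the right-hand side of the proposition is essentially the content of the proposition itself, so this step cannot be waved through.

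The other half of your plan, expanding $\mathcal E_\e(v_\e)-\mathcal E_\ho(u_0)$ ``directly'', also has a structural problem: the oscillating quantity there is the scalar energy-density defect $V(\tfrac x\e,\nabla u_0+\nabla_y\phi(\tfrac x\e,\nabla u_0))-V_\ho(\nabla u_0)$, for which the flux corrector $\sigma$ of Lemma~\ref{L:phi} (built for the matrix-valued flux $J$) is not the right potential; one would need a separate divergence-form representation, and integrating it by parts in $x$ produces terms on $\partial A$ that are \emph{not} multiplied by the cut-off and are not obviously dominated by $\e^2\|\nabla^2u_0\|_{L^2}^2+\int_A(\e^2|\nabla\eta|^2+(\eta-1)^2)\dist^2(\nabla u_0,\SO d)$. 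The paper avoids both issues at once: after the convexity step it writes $\mathcal E_\e(v_\e)-\mathcal E_\e(u_\e)\le\int_A\big(DV(\tfrac x\e,\nabla v_\e)-DV_\ho(\nabla u_0)\big)[\nabla v_\e-\nabla u_\e]$ (using the Euler--Lagrange equation for $u_0$ and $v_\e-u_\e\in H^1_0(A)$), decomposes the flux difference as $J(\tfrac x\e,\nabla u_0)+R_\e$, controls $R_\e$ by the boundedness of $D^2V$ and the explicit remainder in $\nabla v_\e$, and handles $J$ by the identity $J_{ij}=-\partial_k\sigma_{ijk}$ together with the skew symmetry \eqref{prop:sigma1}: since everything is tested against $\nabla(v_\e-u_\e)$ with $v_\e-u_\e\in H^1_0(A)$, the integration by parts produces no boundary terms and gains the factor $\e$ at the cost of $\|D\sigma(\tfrac\cdot\e,\nabla u_0)[\nabla^2u_0]\|_{L^2}$. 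To repair your argument you would have to either prove a quantitative lower bound $\mathcal E_\ho(u_0)-\mathcal E_\e(u_\e)\lesssim$ (RHS) — e.g. by the convexity inequality $V(\tfrac x\e,\nabla u_\e)\ge V(\tfrac x\e,\nabla u_0+\nabla\phi(\tfrac x\e,\nabla u_0))+DV(\tfrac x\e,\cdot)[\nabla u_\e-\nabla u_0-\nabla\phi]$ plus the $(\phi,\sigma)$ machinery — or abandon the detour through $\mathcal E_\ho(u_0)$ and test the flux difference against $\nabla(v_\e-u_\e)$ as the paper does.
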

The result above (in particular assumption \eqref{c:ass:u0sod}) is tailor-made for the application in the proof of Theorem~\ref{T:NC}. Although Proposition~\ref{P:C} treats the simpler convex case, it seems to be new. An estimate on the two-scale expansion for a scalar monotone equation has been obtained in \cite{CPZ05}; their argument uses De-Giorgi-Nash-Moser regularity, and thus does not extend to the vectorial case.

The rest of this section is devoted to the proof of the proposition.

\begin{proof}[Proof of Proposition~\ref{P:C}]

\step 1 Chain rule 

We claim that $\phi(\tfrac{\cdot}\e,\nabla u_0)\in W^{1,r}(A,\R^d)$ and $\sigma(\tfrac{\cdot}\e,\nabla u_0)\in W^{1,r}(A,\R^{d\times d\times d})$ satisfy for almost every $x\in A$
\begin{equation}\label{chainrule}
\begin{split}
 &\nabla \phi(\tfrac{x}\e,\nabla u_0(x))=\e^{-1}\nabla_y\phi(\tfrac{x}\e,\nabla u_0(x))+ D\phi(\tfrac{x}\e,\nabla u_0(x))[\partial_j \nabla u_0(x)]\otimes e_j,\\
 &\nabla \sigma_{ijk}(\tfrac{x}\e,\nabla u_0(x))=\e^{-1}\nabla_y\sigma_{ijk}(\tfrac{x}\e,\nabla u_0(x))+ D\sigma_{ijk}(\tfrac{x}\e, \nabla u_0(x))[\partial_j \nabla u_0(x)]e_j^T,\quad\mbox{}
\end{split}
\end{equation}
for $i,j,k\in\{1,\dots,d\}$. We provide the argument only for $\phi$, since $\sigma$ can be treated in the same way. 

Sobolev embedding implies that $F\mapsto \phi(F)$ is in $C^1(U_{\bar{\bar\rho}},C_{\per,0}^{1,1-\frac{d}q}(Y))$ and thus 
\begin{align*}
 &y\mapsto \phi(y,F),~y\mapsto \nabla\phi(y,F),~y\mapsto D\phi(y,F)\quad\mbox{are measurable for every $F\in U_{\bar{\bar\rho}}$ and }\\
 &F\mapsto \phi(y,F),~F\mapsto \nabla \phi(y,F)\quad\mbox{are in $C^1(U_{\bar{\bar\rho}})$ and }F\mapsto D\phi(y,F)\mbox{ is in $C^{0,1-\frac{d}q}(U_{\bar{\bar\rho}})$ for every $y\in Y$}.
\end{align*}
Hence, \eqref{c:ass:u0sod} and $\bar\rho<\bar{\bar\rho}$ imply the measurability of $\phi(\frac{\cdot}\e,\nabla u_0)$ and of all functions in the first line of \eqref{chainrule}. Furthermore, the right-hand side in \eqref{chainrule} is in $L^r(A)$. Hence, we only need to show that \eqref{chainrule} holds in a distributional sense.

From $F\mapsto \phi(F)$ in $C^1(U_{\bar{\bar\rho}},C^{1,1-\frac{d}{q}}_\per(Y,\R^d))$ it follows that the map $(y,F)\mapsto \phi(y,F)$ is in $C^1(\R^d\times U_{\bar{\bar\rho}})$. Indeed, the existence of all partial derivatives is clear. Moreover, for all $F,G\in \R^{d\times d}$, $x,y\in\R^d$ it holds
%
\begin{align*}
 |\partial_{y_i}\phi(x,G)-\partial_{y_i}\phi(y,F)|\leq& \|\phi(G)-\phi(F)\|_{C^1(Y)}+|x-y|^{1-\frac{d}{q}}\|\phi(F)\|_{C^{1,1-\frac{d}{q}}(Y)}\\
 |\partial_{F_{ij}}\phi(x,G)-\partial_{F_{ij}}\phi(y,F)|\leq&\|D\phi(G)-D\phi(F)\|_{C^0(Y)}+|x-y|^{1-\frac{d}{q}}\|D\phi(F)\|_{C^{0,1-\frac{d}{q}}(Y)}, 
\end{align*}
for all $i,j=1,\dots,d$. Clearly, the right-hand side tends to zero whenever $(x,G)\to (y,F)$ and thus the continuity of the partial derivatives follows.

Using the $C^1$ regularity of $(y,F)\mapsto \phi(y,F)$, a proof of the chain rule \eqref{chainrule} can be found e.g. in \cite{Valent}. For the convenience of the reader we repeat the argument here: We choose a sequence of smooth functions $u_0^\rho\in C^\infty(A,\R^{d})$ satisfying $u_0^\rho\to u_0$ in $W^{2,r}(A)$ as $\rho\to0$ (and thus also in $W^{1,\infty}(A)$). Since $\nabla u_0(x)\in U_{\bar{\bar\rho}}$ for all $x\in \bar A$, we have for $\rho>0$ sufficiently small that $\nabla u_0^\rho(x)\in U_{\bar{\bar\rho}}$ for all $x\in \bar A$. Hence, for all $j=1,\dots,d$ and all $\varphi\in C_c^\infty(A)$, it holds
\begin{align*}
& \int_A \left(\e^{-1}(\partial_{y_j}\phi)(\tfrac{x}\e,\nabla u_0)+ D\phi(\tfrac{x}\e,\nabla u_0)[\partial_j \nabla u_0(x)]\right)\cdot\varphi\,dx+\int_A\phi(\tfrac{x}\e,\nabla u_0)\cdot\partial_j\varphi\,dx\\
&=\lim_{\rho\to0}\int_A \left(\e^{-1}(\partial_{y_j}\phi)(\tfrac{x}\e,\nabla u_0^\rho)+ D\phi(\tfrac{x}\e,\nabla u_0^\rho)[\partial_j \nabla u_0^\rho]\right)\cdot\varphi\,dx+\int_A\phi(\tfrac{x}\e,\nabla u_0^\rho)\cdot\partial_j\varphi\,dx=0,
\end{align*}
and thus the chain rule \eqref{chainrule} for $\phi$.

\step 2 $H^1$-estimate.

In this first step we prove an estimate that exploits the variational (and convex) structure of the problem: We claim that
\begin{equation*}
 \int_A|\nabla v_\e-\nabla u_\e|^2\,dx\lesssim \Lambda_\e,
\end{equation*}
where
\begin{eqnarray*}
 \Lambda_\e&:=& \e^2\int_A\left(|D\phi(\tfrac{x}{\e},\nabla u_0)|+|D\sigma(\tfrac{x}{\e},\nabla u_0)|\right)^2|\nabla^2u_0|^2\,dx\\
        &&+ \e^2\int_A|\nabla\eta|^2|\phi(\tfrac{x}{\e},\nabla u_0(x))|^2\,dx+                \int_A(\eta-1)^2|\nabla\phi(\tfrac{x}{\e},\nabla u_0(x))|^2.
\end{eqnarray*}
Indeed, since $u_\e$ is a minimizer and $V$ is strongly $\beta$-convex (which we exploit in form of \eqref{ineq:strong-conv2}), we have
\begin{eqnarray}\label{CC:eq:1}
 \frac{\beta}{2}\|\nabla v_\e-\nabla u_\e\|_{L^2(A)}^2&\leq& \mathcal E_\e(v_\e)-\mathcal E_\e(u_\e) \notag\\
  &\leq&\int_{A}DV(\tfrac{x}{\e},\nabla v_\e)[\nabla v_\e   -\nabla u_\e]-f\cdot(v_\e-u_\e)\,dx \notag\\
  &=&\int_{A}\Big(DV(\tfrac{x}{\e},\nabla v_\e)-DV_{\hom}(\nabla u_0)\Big)[\nabla v_\e-\nabla u_\e]\,dx,
\end{eqnarray}
where the last identity holds thanks to the Euler-Lagrange equation for $u_0$, i.e. $-\divv(DV_{\hom}(\nabla u_0))=f$ (in its weak form). We rewrite the flux difference on the right-hand side of \eqref{CC:eq:1} as
\begin{eqnarray*}
 DV(\tfrac{x}{\e},\nabla v_\e(x))-DV_{\hom}(\nabla u_0(x))&=&J(\tfrac{x}{\e},\nabla u_0(x))+R_\e(x)\qquad\text{where}\\
 J(y,F)&=&DV(y,F+\nabla\phi(y,F))-DV_{\hom}(F)\qquad\mbox{see \eqref{def:J}}\\
 R_\e(x)&:=&DV(\tfrac{x}{\e},\nabla v_\e(x))-DV\big(\tfrac{x}{\e},\nabla u_0(x)+\nabla\phi(\tfrac{x}{\e},\nabla u_0(x))\big).
\end{eqnarray*}
Hence, it suffices to prove the following two estimates
\begin{eqnarray}
 \left|\int_{A} \langle J(\tfrac{x}{\e},\nabla u_0(x)),\nabla v_\e-\nabla u_\e \rangle\,dx\right|&\lesssim& \Lambda_\e^\frac 12\|\nabla v_\e-\nabla u_\e\|_{L^2(A)},\label{CC:eq:2}\\
 \int_{A}|R_\e|^2\,dx&\lesssim& \Lambda_\e.\label{CC:eq:3}
\end{eqnarray}

\textit{Substep 1.1} (Estimate of \eqref{CC:eq:2}).

Recall that
\begin{equation*}
 -\sum_{k=1}^d\partial_{k}\sigma_{ijk}(y,F)=J_{ij}(y,F)\quad\mbox{for $i,j=1,\dots,d$ and all $y\in\R^d$,}
\end{equation*}
see \eqref{prop:sigma2}, and thus 
\begin{eqnarray*}
 \e\sum_{k=1}^d\partial_k\sigma_{ijk}(\tfrac{x}{\e},\nabla u_0(x))&=&\sum_{k=1}^d\left((\partial_k \sigma_{ijk})(\tfrac{x}{\e},\nabla u_0(x))+\e D\sigma_{ijk}(\tfrac{x}{\e},\nabla u_0(x))[\partial_k\nabla u_0(x)]\right)\\
 &=&-J_{ij}(\tfrac{x}{\e},\nabla u_0(x))+\e \sum_{k=1}^dD\sigma_{ijk}(\tfrac{x}{\e},\nabla u_0(x))[\partial_k\nabla u_0(x)].
  \end{eqnarray*}
Hence, for all $\eta\in C^\infty_0(A)$, we have (by an integration by parts)
\begin{eqnarray*}
 \int_A \langle J(\tfrac{x}{\e},\nabla u_0(x)),\nabla \eta\rangle&=&\e\sum_{i,j,k=1}^d\int_{A}D\sigma_{ijk}(\tfrac{x}{\e},\nabla u_0(x))[\partial_k\nabla u_0(x)] \partial_j(\eta\cdot e_i)\,dx\\
    & &\quad+\e\sum_{i,j,k=1}^d\int_A\sigma_{ijk}(\tfrac{x}{\e},\nabla u_0(x))\partial_k\partial_{j}(\eta\cdot e_i)\,dx,
  \end{eqnarray*}
where the last term on the right-hand side vanishes, since $\sigma_{ijk}=-\sigma_{ikj}$ and $\nabla^2 \eta$ is symmetric. By density of $C^\infty_0(A)$ in $H^1_0(A)$ this identity holds for $\eta=v_\e-u_\e$. Hence,
\begin{equation*}
 \left|\int_A \langle J(\tfrac{x}{\e},\nabla u_0(x)) , \nabla v_\e-\nabla u_\e\rangle \right|\leq 
 \e\sum_{k=1}^d\|D\sigma(\tfrac{\cdot}{\e},\nabla u_0)[\partial_k\nabla u_0]\|_{L^2(A)}\|\nabla v_\e-\nabla u_\e\|_{L^2(A)},
\end{equation*}
and thus \eqref{CC:eq:2}.

\textit{Substep 1.2} (Estimate of \eqref{CC:eq:3}).

A direct calculation shows that
\begin{eqnarray*}
 \nabla v_\e(x)&=&\nabla u_0(x)+\nabla_y\phi(\tfrac{x}{\e},\nabla u_0(x)) + G_\e(x)\qquad\text{where }\\
   G_\e(x)&:=&(\eta - 1)\nabla_y\phi(\tfrac{x}{\e},\nabla u_0(x))+\e\eta D\phi(\tfrac{x}{\e},\nabla u_0(x))[\partial_k\nabla u(x)]\otimes e_k+\e \phi(\tfrac{x}{\e},\nabla u_0(x))\otimes\nabla\eta.
\end{eqnarray*}
Note that $\|G_\e\|_{L^2(A)}^2\leq \Lambda_\e$.  Since the fourth order tensor $D^2V(y,F)$ is bounded (uniformly in $y$ and $F$), we deduce that
\begin{eqnarray*}
 |R_\e(x)|\leq \left(\int_0^1\left|D^2V\left(\tfrac{x}{\e},\nabla u_0(x)+\nabla_y\phi(\tfrac{x}{\e},\nabla u_0(x))+tG_\e(x)\right)\right|\,dt\right)|G_\e(x)|\lesssim |G_\e(x)|,
\end{eqnarray*}
and thus \eqref{CC:eq:3}.

\step 3 Conclusion

In this step we improve the estimate of Step~2 by appealing to the Lipschitz regularity of the extended corrector $(\phi,\sigma)$. By Lemma~\ref{L:convexcorrector} the maps $F\mapsto \phi(F)$ and $F\mapsto \sigma(F)$ are locally Lipschitz from $U_{\bar{\bar\rho}}$ to $W^{1,\infty}(A)$. Hence, using \eqref{c:ass:u0sod} and $\bar\rho<\bar{\bar\rho}$, we obtain 
  $$\|D\phi(\nabla u_0(x))\|_{W^{1,\infty}(Y)}+\|D\sigma(\nabla u_0(x))\|_{W^{1,\infty}(Y)}\lesssim C(\bar\rho)\quad\mbox{for every $x\in A$.}$$
  Moreover, since $\phi(R)=0$ for every $R\in \SO d$, we obtain
  $$\|\phi(\nabla u(x))\|_{W^{1,\infty}(Y)}\lesssim C(\bar\rho)\dist(\nabla u_0(x),\SO d).$$
  Combining the previous two displayed formulas with the definition of $\Lambda_\e$, we obtain 
  \begin{equation*}
    \Lambda_\e\lesssim C(\bar\rho)\left(\e^2\int_A|\nabla^2u_0|^2+\int_A\left(\e^2|\nabla\eta|^2+(\eta-1)^2\right)\dist^2(\nabla u_0(x),\SO d)\,dx\right).
  \end{equation*}
  By Poincar\'e's inequality this proves the claimed estimate for $\|\nabla v_\e-\nabla u_\e\|_{H^1(A)}$. The corresponding estimate for the energy difference $\mathcal E_\e(v_\e)-\mathcal E_\e(u_\e)$ is a direct consequence of the minimality of $u_\e$ and the boundedness of $D^2V$:
  \begin{align}\label{est:diffene}
   \mathcal E_\e(v_\e)-\mathcal E_\e(u_\e)&=\int_A V(\tfrac{x}\e,\nabla v_\e)-V(\tfrac{x}\e,\nabla u_\e)-f\cdot (v_\e-u_\e)\,dx\notag\\
   &=\int_A\int_0^1(1-s)D^2V(\tfrac{x}\e,\nabla u_\e+s(\nabla v_\e-\nabla u_\e))[\nabla v_\e-\nabla u_\e,\nabla v_\e-\nabla u_\e]\,ds\,dx\notag\\
   &\leq \frac{1}{2\beta}\int_A |\nabla v_\e-\nabla u_\e|^2,
  \end{align}
  which finishes the proof.
  
\end{proof}

\subsection{Proof of Theorem~\ref{T:NC}}\label{S:NC}

The proof is structured as follows. We first prove that the minimizer $u_0$ of $\mathcal E_{\hom}$ satisfies the nonlinear estimates \eqref{NC:NL:1} and \eqref{NC:NL:2}. This will be done in Step~1 by appealing to the implicit function theorem, which is one of the reasons why we require $\bar\rho$ (and thus the data) to be small. In Step~2 we prove that $u_0$ is also the unique minimizer of the non-convex minimization problem. This is a consequence of the identity
\begin{equation*}
  W_{\ho}(\nabla u_0)=V_{\ho}(\nabla u_0)-\mu\det\nabla u_0,
\end{equation*}
which is available, provided $\nabla u_0$ is sufficiently close to $\SO d$ (uniformly in $x$), see \eqref{NC:NL:1} and \eqref{pf:onecell}. The core of the proof is the argument for part (c). Here we modify the two-scale expansion $v_\e$ of Proposition~\ref{P:C} in such a way that $\nabla v_\e$ is sufficiently close to $\SO d$, so that we can exploit the matching property \eqref{W=V}.

\medskip

\subsubsection*{\bf Proof of (a) and (b).}

\step 1 (Minimizer for $\mathcal E_{\hom}$ and nonlinear estimates.)

Given $g\in H^1(A)$, $f\in L^2(A)$, we denote by $w(f,g)\in g+H_0^1(A)$ the unique minimizer of the functional \eqref{ene:convexhom}. Since $V_\ho$ is sufficiently smooth, $w(f,g)$ is characterized as the unique solution $\varphi\in g+H_0^1(A)$ of the corresponding Euler-Lagrange equation
\begin{equation*}
 \int_A D V_\ho(\nabla \varphi)[\nabla \eta]=\int_A f\cdot\eta\quad\mbox{for all $\eta\in H_0^1(A)$}.
\end{equation*}
Note that $w(f,g)=g_0+h+\varphi$ where $h=g-g_0$ and $\varphi\in H^1_0(A)$ minimizes
\begin{equation*}
 \varphi\mapsto \int_A V_\ho(\nabla g_0+\nabla h+\nabla\varphi)-f\cdot(g_0+h+\varphi).
\end{equation*}
For $\rho>0$ set
\begin{align*}
 &B_1(\rho):=\{\,(f,h)\in L^r(A)\times W^{2,r}(A)\,|\, \|f\|_{L^r(A)}+\|h\|_{W^{2,r}(A)}<\rho\,\}\\
 &B_2(\rho):=\{\,\varphi\in H_0^1(A)\cap W^{2,r}(A)\,|\, \|\varphi\|_{W^{2,r}(A)}<\rho\,\}.
\end{align*}
By Sobolev embedding, we find $\rho'\in(0,\bar{\bar\rho})$ such that for all $(f,h)\in B_1(\rho')$ and $\varphi\in B_2(\rho')$
\begin{equation}\label{rel:g0}
 \|\dist(\nabla g_0,\SO d)\|_{L^\infty(A)}<\rho'\quad\mbox{implies}\quad\|\dist(\nabla g_0+\nabla h+\nabla\varphi,\SO d)\|_{L^\infty(A)}<\bar{\bar\rho}.
\end{equation}
For given $g_0\in W^{2,r}(A)$ satisfying  $\|\dist(\nabla g_0,\SO d)\|_{L^\infty(A)}<\rho'$, we define the mapping  $T:B_1(\rho')\times B_2(\rho')\to L^r(A)$ given by
\begin{equation*}
 T(f,h,\varphi):=-\divv\big(DV_\ho(\nabla g_0+\nabla h+\nabla\varphi)\big)-f.
\end{equation*}
In analogy to the proof of Lemma~\ref{L:convexcorrector}, we have
\begin{enumerate}[(a)]
 \item $T\in C^1(B_1(\rho')\times B_2(\rho'), L^r(A))$\\
 \textit{Argument:} This is a direct consequence of \eqref{rel:g0}, $V_\ho\in C^3(U_{\bar{\bar\rho}})$ and the differentiability of the corresponding composition operators, cf.~Lemma~\ref{L:composition}. In particular, we have
 $$D_\varphi T(f,g,\varphi)[\phi]=-\divv \left(D^2V_\ho(\nabla g_0+\nabla h+\nabla \varphi)[\nabla\phi]\right).$$
 \item $T(0,0,0)=0$\\
 \textit{Argument:} This follows by \eqref{NC:lin_bc} and $V_\ho=W_\ho+\mu \det$ on $U_{\bar{\bar\rho}}$.
 \item $D_\varphi T(0,0,0):H_0^1(A)\cap W^{2,r}(A)\to L^r(A)$ is invertible.\\
 \textit{Argument:} By (a), we have that 
 $$D_\varphi T(0,0,0)[\phi]=-\divv \left(D^2V_\ho(\nabla g_0)[\nabla\phi]\right).$$
 The fourth order tensor $D^2V_\ho$ is uniformly elliptic and a combination of $V_\ho\in C^3(U_{\bar{\bar \rho}})$, \eqref{rel:g0} and $\nabla g_0\in W^{1,r}(\R^d)$ yields $\mathbb L:=D^2V_\ho(\nabla g_0)\in W_\loc^{1,r}(\R^d)\hookrightarrow C_\loc^{0,1-\frac{d}r}(\R^d)$. Hence, we obtain the invertibility of $D_\varphi T(0,0,0)$ by maximal $L^r$-regularity. 
 
For convenience of the reader, we recall here an argument for the validity of maximal regularity in this case: Fix $f\in L^r(A)$, $r>d$. By maximal regularity and $\mathbb L\in C(\bar A)$ , every weak solution $\varphi\in H_0^1(A)$ of
$$-\divv \mathbb L\nabla \varphi:=-\partial_\alpha \mathbb L_{\alpha \beta}^{ij}\partial_\beta  \varphi^j=f^i$$
satisfies $\varphi\in W^{1,s}(A)$ for every $s<\infty$. Moreover, appealing to maximal regularity for systems in non-divergence form, $\nabla \varphi\in L^s(A)$ for every $s<\infty$, $\mathbb L\in W^{1,r}(A)$ with $r>d$, and 
$$-\mathbb L_{\alpha\beta}^{ij}\partial_\alpha\partial_\beta \varphi=f^i+(\partial_{\alpha} \mathbb L_{\alpha \beta}^{ij})\partial_\beta  \varphi^j,$$
we obtain firstly that $\varphi\in W^{2,\tilde r}(A)$ for every $\tilde r<r$ which implies $\nabla \varphi\in L^\infty(A)$ and thus eventually $\varphi\in W^{2,r}(A)$.
\end{enumerate}
Hence, by the implicit function theorem, there exists $\rho''>0$ and a map $\Lambda\in C^1(B_2(\rho''), H_0^1(A)\cap W^{2,r}(A))$ such that $T(f,h,\Lambda(f,h))=0$, $\Lambda(f,h)\in B_2(\rho')$ and
\begin{eqnarray*}
  \|\Lambda(f,h)\|_{W^{2,r}(\R^d)}&\lesssim&C(\rho'')\left(\|f\|_{L^r(A)}+\|h\|_{W^{2,r}(A)}\right).
\end{eqnarray*}
Hence, $w:=g_0+h+\Lambda(h,f)=g+\Lambda(h,f)\in H_g^1(A)\cap W^{2,r}(A)$ satisfies $\dist(\nabla w,\SO d)<\bar{\bar \rho}$, and
\begin{equation*}
 \int DV_\ho(\nabla w))[\nabla \eta]-f\cdot\eta=0\qquad\text{ for all }\eta\in H^1_0(A),
\end{equation*}
and
\begin{align*}
 &\|\dist(\nabla w, \SO d)\|_{L^\infty(A)}+\|w-g_0\|_{W^{2,r}(A)},\\
 &\lesssim C(\rho'')\left(\|f\|_{L^r(A)}+\|g-g_0\|_{W^{2,r}(A)}+\|\dist(\nabla g_0,\SO d)\|_{L^\infty(A)}\right).
\end{align*}

\step 2 Argument for (a).

Next, we show that $w(f,g)$ is the unique minimizer in $g+W_0^{1,p}(A)$ of \eqref{ene:nonconvexhom}. Given $v\in g+W_0^{1,p}(A)$, it holds 
\begin{align}\label{est:lminhom}
  & \int_A W_\ho(\nabla v)-f\cdot v-(W_\ho(\nabla w(f,g))-f\cdot w(f,g))\notag\\
  &\, \stackrel{\eqref{WhgeqVh},\eqref{pf:onecell}}{\geq} \int_A V_\ho(\nabla v)-f\cdot v-\mu\det(\nabla v)-V_\ho(\nabla w(f,g))+f\cdot w(f,g)+\mu\det(\nabla w(f,g))\notag\\
  &\, = \int_A V_\ho(\nabla v)-f\cdot v-(V_\ho(\nabla w(f,g))-f\cdot w(f,g))\geq0,
\end{align}
where the determinant terms vanishes since $v-w(f,g)\in W_0^{1,p}(A)$ and $p\geq d$. Hence, $w(f,g)$ is a minimizer of $\mathcal I_\ho$ in $g+W_0^{1,p}(A)$. Moreover, the strong convexity of $V_\ho$ implies that the last inequality in \eqref{est:lminhom} is an equality if and only if $v\equiv w(f,g)$, and thus uniqueness follows.

\medskip

\subsubsection*{\bf Proof of (c).}

In order to illustrate the idea of the proof, we first give a heuristic argument for (c) based on the assumption that
\begin{equation}\label{eq:unrealistic}
  \|\dist(\nabla v_\e,\SO d)\|_{L^\infty(A)}<\delta,
\end{equation}
where $v_\e$ is a two-scale expansion with cut-off as in Proposition~\ref{P:C}. Based on this assumption we show that
\begin{equation}\label{NC:master:ineq}
  \frac{\beta}{4}\int_A|\nabla w_\e-\nabla v_\e|^2\leq 2(\mathcal E_\e(v_\e)-\mathcal E_\e(u_\e))+\left(\mathcal I_\e(w_\e)-\inf_{g+W_0^{1,p}(A)}\mathcal I_\e\right).
\end{equation}
In view of the convex error estimate, see Proposition~\ref{P:C}, this implies the estimate of (c) in a form where $v_\e$ replaces the two-scale expansion without cut-off. \eqref{NC:master:ineq} can be seen as follows: By strong $\beta$-convexity of $V$, and the fact that $u_\e$ minimizes $\mathcal E_\e$, we have
\begin{equation*}
  \frac{\beta}{2}\int_{A}|\nabla w-\nabla u_\e|^2\leq \mathcal E_\e(w)-\mathcal E_\e(u_\e)\qquad\text{for all }w\in H^1_g(A).
\end{equation*}
We specify this estimate to $w=w_\e$ and $w=v_\e$ and deduce that
\begin{eqnarray*}
  \frac{\beta}{4}\int_{A}|\nabla w_\e-\nabla v_\e|^2&\leq&   \frac{\beta}{2}\int_{A}|\nabla w_\e-\nabla u_\e|^2+  \frac{\beta}{2}\int_{A}|\nabla u_\e-\nabla v_\e|^2\\
  &\leq&\mathcal E_\e(w_\e)-\mathcal E_\e(u_\e)+\mathcal E_\e(v_\e)-\mathcal E_\e(u_\e)\\
  &\leq&\mathcal E_\e(w_\e)-\mathcal E_\e(v_\e)+2\left(\mathcal E_\e(v_\e)-\mathcal E_\e(u_\e)\right).
\end{eqnarray*}
We estimate the first term on the right-hand side by appealing to $V(\cdot,F)\leq W(\cdot,F)+\mu \det(F)$ and the fact that equality holds for $F\in U_\delta$, that is assumption \eqref{eq:unrealistic}. We get
\begin{eqnarray*}
  \mathcal E_\e(w_\e)-\mathcal E_\e(v_\e)\leq \mathcal I_\e(w_\e)-\mathcal I_\e(v_\e)+\mu \int_A\det(\nabla w_\e)-\det(\nabla v_\e)\,dx\leq \mathcal I_\e(w_\e)-\inf_{g+W_0^{1,p}(A)}\mathcal I_\e,
\end{eqnarray*}
where the integral that invokes the determinant vanishes, since $w_\e-v_\e\in W^{1,p}_0(A)$ with $p\geq d$. This completes the argument for \eqref{NC:master:ineq} and thus the heuristic argument for (c). 
\medskip

In order to turn this argument into a rigorous proof, we need to circumvent assumption \eqref{eq:unrealistic}. Note that if $\nabla^2 u_0\in L^\infty(A)$, \eqref{eq:unrealistic} follows for sufficiently small loads and sufficiently small $\e\ll 1$, see below. However, in general $\nabla^2u_0$ is not bounded. We therefore replace $\nabla^2 u_0$ in the definition of $v_\e$ by a truncated version. We denote the resulting two-scale expansion by $\bar v_\e$. We are going to show that $\nabla v_\e-\nabla\bar v_\e$ is close and that \eqref{eq:unrealistic} holds for $\nabla\bar v_\e$ for all $\e\leq\e_0$, where $\e_0$ can be chosen only depending on $\bar\rho$ and the size of $\|\e\nabla\eta\|_{L^\infty(A)}$ and $\|\nabla^2u_0\|_{L^r(A)}$. In order to get a uniform estimate, we fix the cut-off function from now on. To that end note that, since $A$ is Lipschitz, there exists a constant $c=c(A)>0$ such that for any $\e\in(0,1)$ there exists a cut-off function $\eta$ (depending on $\e$) such that
\begin{equation}\label{eq:cut-off}
  \int_A(\e^2|\nabla\eta|^2+(1-\eta)^2)\leq c(A)\e,\qquad \e\|\nabla\eta\|_{L^\infty(A)}\leq 1.
\end{equation}
From now on we assume that $\eta$ satisfies the conditions above.

\step 1 Lipschitz truncation.

We claim that for every $\e\in(0,1)$ and $\bar\rho>0$ there exists a vector-field $(\nabla u_0)_\e\in W^{1,\infty}(A,\R^{d\times d})$ satisfying for $i=1,\dots,d$
\begin{subequations}
 \begin{eqnarray}
  \| \partial_i(\nabla u_0)_\e\|_{L^\infty(A)}&\lesssim&   \e^{-\frac{d}{r}}(\|\nabla^2g_0\|_{L^r(A)}+\bar\rho),\label{u0:trunc:1}\\
  \| \partial_i(\nabla u_0)_\e - \partial_i\nabla u_0\|_{L^s(A)} &\lesssim& \e^{\frac{d}s-\frac{d}{r}}(\|\nabla^2g_0\|_{L^r(A)}+\Lambda(f,g,g_0))\quad\mbox{for all $s\in[1,r]$},\label{u0:trunc:2}\\
  \|(\nabla u_0)_\e - \nabla u_0\|_{L^s(A)}&\lesssim& \e^{1+\frac{d}s-\frac{d}{r}}(\|\nabla^2g_0\|_{L^r(A)}+\Lambda(f,g,g_0))\quad\mbox{for all $s\in[1,\infty]$},\label{u0:trunc:3}\\
  |\{ (\nabla u_0)_\e \neq  \nabla u_0\}|&\lesssim&\e^d,\label{u0:trunc:4}
 \end{eqnarray}
\end{subequations}
where we set $\frac1\infty=0$. It suffices to argue that there exists $C=C(A,d,r)$ such that for every $\lambda>1$ there exists a vector-field $(\nabla u_0)_\lambda\in W^{1,\infty}(A,\R^{d\times d})$ satisfying for $i=1,\dots,d$
\begin{subequations}
 \begin{eqnarray}
  \| \partial_i(\nabla u_0)_\lambda\|_{L^\infty(A)}&\leq& C\lambda,\label{u0:trunc:1b}\\
  \| \partial_i(\nabla u_0)_\lambda - \partial_i\nabla u_0\|_{L^s(A)} &\leq& C \lambda^{1-\frac{r}s}\|\nabla^2 u_0\|_{L^r(A)}^\frac{r}s\quad\mbox{for all $s\in[1,r]$},\label{u0:trunc:2b}\\
  \|(\nabla u_0)_\lambda - \nabla u_0\|_{L^s(A)}&\leq& C\lambda^{1-\frac{r}{d}-\frac{r}{s}}\|\nabla^2 u_0\|_{L^r(A)}^{\frac{r}{s}+\frac{r}{d}}\quad\mbox{for all $s\in[1,\infty]$},\label{u0:trunc:3b}\\
  |\{ (\nabla u_0)_\lambda \neq  \nabla u_0\}|&\leq& C\lambda^{-r}\|\nabla^2 u_0\|_{L^{r}(A)}^r.\label{u0:trunc:4b}
 \end{eqnarray}
\end{subequations}
Indeed, the claim then follows by choosing $\lambda=\e^{-\frac{d}{r}}(\|\nabla^2g_0\|_{L^r(A)}+\bar\rho)$ and using 
$$\|\nabla^2 u_0\|_{L^r(A)}\leq \|\nabla^2 g_0\|_{L^r(A)}+\Lambda(f,g,g_0)\leq \|\nabla^2 g_0\|_{L^r(A)}+\bar\rho.$$ 
In the construction of $(\nabla u_0)_{\lambda}$ we distinguish the cases $\lambda\geq \lambda_0$ and $0<\lambda<\lambda_0$, with threshold $\lambda_0$ determined below, see \eqref{est:trunc:1}.

\substep{1.1} Estimates \eqref{u0:trunc:1b}--\eqref{u0:trunc:4b} for $\lambda$ sufficiently large.

By Lipschitz truncation in the version of \cite[Proposition A.1]{FJM02}, there exists a constant $C'=C'(A,d,r)>1$ such that for every $\lambda>0$ we find a vector-field $(\nabla u_0)_\lambda\in W^{1,\infty}(A,\R^{d\times d})$ satisfying
\begin{eqnarray*}
 \|\partial_i(\nabla u_0)_\lambda\|_{L^{\infty}(A)}&\leq& C' \lambda\quad\mbox{for $i=1,\dots,d$,}\\
 |E_\lambda|:=|\{ (\nabla u_0)_\lambda \neq  \nabla u_0\}|&\leq& C'\lambda^{-r}\|\nabla^2 u_0\|_{L^{r}(A)}^r.
\end{eqnarray*}
Thus, $(\nabla u_0)_\lambda$ already satisfies \eqref{u0:trunc:1b} and \eqref{u0:trunc:4b}, and it remains to prove \eqref{u0:trunc:2b} and \eqref{u0:trunc:3b}. Note that the combination of the two estimates yields for every $s\in[1,r]$ and $i=1,\dots,d$:
\begin{eqnarray*}
 \|\partial_i (\nabla u_0)_\lambda- \partial_i \nabla u_0\|_{L^s(A)}&\leq& \left(\int_{\{\nabla u_0^\lambda\neq \nabla u_0\}}|\partial_i(\nabla u_0)_\lambda|^s\right)^{\frac1s}+\left(\int_{\{\nabla u_0^\lambda\neq \nabla u_0\}}|\nabla^2 u_0|^s\right)^{\frac1s}\\
  &\leq& C'\lambda|E_\lambda|^\frac1s+|E_\lambda|^{\frac1s-\frac1r}\|\nabla^2 u_0\|_{L^r(A)}\\
&\leq&2C'\lambda^{1-\frac{r}s}\|\nabla^2 u_0\|_{L^r(A)}^\frac{r}s,
\end{eqnarray*}
and thus \eqref{u0:trunc:2b}. Next we estimate $(\nabla u_0)_{\lambda}-\nabla u_0$. We claim that there exists $C''=C''(A,d,r)$ such that
\begin{equation}\label{est:trunc:1}
 \|\nabla u_0^\lambda - \nabla u_0\|_{L^\infty(A)}\leq C(A,d,r)\lambda^{1-\frac{r}{d}}\|\nabla^2 u_0\|_{L^r(A)}^\frac{r}{d}\qquad\mbox{for all $\lambda>\lambda_0=C''\|\nabla u_0\|_{L^r(A)}$}.
\end{equation} 
Notice that \eqref{est:trunc:1} implies \eqref{u0:trunc:3b} for $\lambda\geq\lambda_0$ by the following calculation:
\begin{align*}
 \|(\nabla u_0)_\lambda - \nabla u_0\|_{L^s(A)}\leq& \|(\nabla u_0)_\lambda - \nabla u_0\|_{L^\infty(A)} |E_\lambda|^\frac1s\leq C(A,d,r)\lambda^{1-\frac{r}{d}-\frac{r}s}\|\nabla^2 u_0\|_{L^r(A)}^{\frac{r}{d}+\frac{r}{s}}.
\end{align*}
To show \eqref{est:trunc:1}, we use that the Lipschitz regularity of $A$ implies that $A$ satisfies the following cone condition: There exists a cone $\Co\subset\R^d$ with height $h>0$, aperture angle $\kappa\in(0,\pi]$ and vertex in $0$, such that
$$x\in A\Rightarrow x+T_x(\Co)\subset A$$
where $T_x$ is a rigid motion, cf.~\cite{Adams}~p.84. Set $C''=(C'|\operatorname{Co}|^{-1})^\frac{1}{r}$. Then, for every $\lambda\geq\lambda_0:=C''\|\nabla^2u_0\|_{L^r(A)}$ it holds  
\begin{align*}
 |E_\lambda|\leq C'\lambda^{-r}\|\nabla^2 u_0\|_{L^r(A)}^r\leq|\Co|.
\end{align*}
Using that every cone $\operatorname{Co}'$ with height $h$ and fixed aperture angle $\kappa$ satisfies $|\operatorname{Co}'|=C(d,\kappa)h^d$, we find for every $x\in E_\lambda$ with $\lambda > \lambda_0$ a point $y\in x+ T_x(\operatorname{Co})\subset A$ satisfying
$$|x-y|<C(A,d,r)\lambda^{-\frac{r}d}\|\nabla^2 u_0\|_{L^r(A)}^{\frac{r}d}\quad\mbox{and}\quad \nabla u_0^\lambda(y)= \nabla u_0(y).$$
Thus, appealing to the Lipschitz estimate for $(\nabla u_0)_\lambda$ and Morrey's inequality (on a cone with apeture angle $\kappa$ and height $|y-x|(\leq h)$, see Lemma~\ref{L:Morreycone})
, we obtain
\begin{align*}
 |(\nabla u_0)_{\lambda}(x)- \nabla u_0(x)|\leq& |(\nabla u_0)_{\lambda}(x)- \nabla u_0^\lambda(y)|+|\nabla u_0(y) - \nabla u_0(x)|\\
 \leq& C(A,d,r)(|x-y|\lambda + |x-y|^{1-\frac{d}{r}}\|\nabla^2 u_0\|_{L^r(A)})\\
 \leq& C(A,d,r)\lambda^{1-\frac{r}{d}}\|\nabla^2 u_0\|_{L^r(A)}^{\frac{r}{d}},
\end{align*}
and by the arbitrariness of $x\in E_\lambda$, we obtain \eqref{est:trunc:1}. 

\substep{1.2} Estimates \eqref{u0:trunc:1b}--\eqref{u0:trunc:4b} for $\lambda\in(1,\lambda_0]$.

We claim that for $\lambda\leq \lambda_0:=C''\|\nabla^2 u_0\|_{L^r(A)}$ the constant $(\nabla u_0)_\lambda=\fint_A \nabla u_0$ satisfies the sought after properties \eqref{u0:trunc:1b}--\eqref{u0:trunc:3b}. Indeed, \eqref{u0:trunc:1b} is trivial. H\"older's inequality and the definition of $\lambda_0$ yields for $s\in[1,r]$
\begin{align*}
 \|\partial_i\nabla u_0\|_{L^s(A)}\leq  |A|^{\frac1r-\frac1s} \|\nabla^2 u_0\|_{L^r(A)}^{1-\frac{r}{s}}\|\nabla^2 u_0\|_{L^r(A)}^\frac{r}{s}\leq C(A,d,r)\lambda_0^{1-\frac{r}{s}}\|\nabla^2 u_0\|_{L^r(A)}^\frac{r}{s}.
\end{align*}
Since $\frac{r}{s}\geq1$, this implies \eqref{u0:trunc:2b} for $\lambda\leq \lambda_0$. The estimate \eqref{u0:trunc:3b} follows similarly by appealing to Sobolev's inequality. Finally, note that \eqref{u0:trunc:4b} is a consequence of $C(A)|A|\lesssim C''\leq \lambda_0^{-r}\|\nabla^2 u_0\|_{L^r(A)}^r\leq \lambda^{-r}\|\nabla^2 u_0\|_{L^r(A)}^r$.

\step 2 Modified asymptotic expansion.

We claim that there exist $\bar \rho\lesssim1$ such that for all $\e\in (0,1)$ we find $\bar v_\e\in g+W_0^{1,\infty}(A)$ satisfying
\begin{eqnarray}
 \|\dist(\nabla \bar v_\e,\SO d)\|_{L^\infty(A)}&<&\delta,\label{est:barve1}\\
 \|v_\e-\bar v_\e\|_{H^1(A)}&\lesssim& C(\bar \rho)\e(1+\|\nabla^2 g_0\|_{L^r(A)}^\frac{r}{r-d}) (\Lambda(f,g,g_0)+\|\nabla^2 g_0\|_{L^r(A)})\label{est:barve2}.
\end{eqnarray}

\substep{2.1} Estimates for $\e\in(0,\e_0)$ with $\e_0>0$ sufficiently small.

We claim that there exists $\bar\rho\lesssim1$ and $\e_0\lesssim (1+\|\nabla^2 g_0\|_{L^r(A)})^{-\frac{r}{r-d}}$ sufficiently small such that for every $\e\in (0,\e_0)$ we find $\bar v_\e\in g+W_0^{1,\infty}(A)$ satisfying \eqref{est:barve1} and 
\begin{equation}\label{claim:vel:h1}
  \|\bar v_\e-v_\e\|_{H^1(A)}\lesssim C(\bar\rho) \e(\|\nabla^2 g_0\|_{L^{r}(A)}+\Lambda(f,g,g_0)).
\end{equation}
First, we note that
\begin{align*}
 &\|\dist((\nabla u_0)_\e,\SO d)\|_{L^\infty(A)}\\
 &\qquad\leq \|\dist(\nabla g_0,\SO d)\|_{L^\infty(A)}+\|\nabla g_0 - \nabla u_0\|_{L^\infty(A)}+\|\nabla u_0-(\nabla u_0)_\e\|_{L^\infty(A)}\\
 &\qquad\lesssim \bar \rho+\e^{1-\frac{d}{r}}(\bar\rho+\|\nabla^2 g_0\|_{L^r(A)})
\end{align*}
and thus for $\bar \rho\lesssim1$ and $\e_0\lesssim(1+\|\nabla^2 g_0\|_{L^r(A)})^{-\frac{r}{r-d}}$ sufficiently small
\begin{equation}
 \|\dist((\nabla u_0)_\e,\SO d)\|_{L^\infty(A)}<\bar{\bar \rho}\quad\mbox{for all $\e\in(0,\e_0)$}.\label{est:u0l:1}
\end{equation}
From \eqref{est:u0l:1}, we deduce that for all $\e\in(0,\e_0)$  
\begin{equation}\label{def:vepsl}
 \bar v_\e:=u_0+\e \eta \phi(\tfrac{\cdot}\e,(\nabla u_0)_\e)\quad\mbox{satisfies}\quad \bar v_\e\in W^{1,\infty}(A),
\end{equation}
and it holds for almost every $x\in A$
\begin{eqnarray*}
  \nabla \bar v_\e(x)&=&\nabla u_0(x)+\nabla\big(\e\eta(x)\phi(\tfrac{x}{\e},(\nabla u_0)_\e(x))\big)\\
  &=&\nabla u_0(x)+\eta(x)\nabla_y\phi(\tfrac{x}{\e},(\nabla u_0)_\e(x))+\e\sum_{j=1}^d\eta(D\phi(\tfrac{x}{\e},(\nabla u_0)_\e(x))[\partial_j(\nabla u_0)_\e(x)]\otimes e_j)\\
  &&+\e\phi(\tfrac{x}{\e},(\nabla u_0)_\e(x))\otimes\nabla\eta(x).     
\end{eqnarray*}
To simplify the notation, we drop the argument $\frac{x}\e$ in the following. The estimate \eqref{est:u0l:1} and $\phi\in C^1(U_{\bar{\bar\rho}},W_{\per,0}^{1,\infty}(Y))$ imply for almost every $x\in A$ that
$$\| \phi((\nabla u_0)_\e(x))-\phi(\nabla u_0(x))\|_{W^{1,\infty}(Y)}\lesssim|(\nabla u_0)_\e(x)-\nabla u_0(x)|\quad\mbox{and}\quad \|D\phi((\nabla u_0)_\e(x))\|_{L^\infty(Y)}\lesssim1.$$
Combining the above estimates with \eqref{eq:cut-off}, \eqref{u0:trunc:1}, and $\phi(R)=0$ for all for all $R\in \SO d$ (see Lemma~\ref{L:convexcorrector}), we obtain for almost every $x\in A$ that
\begin{align*}
 |\nabla \bar v_\e(x)-\nabla u_0(x)|\lesssim& (1+|\e\nabla \eta(x)|)\|\phi((\nabla u_0)_\e(x))\|_{W^{1,\infty}(Y)}+\e|\nabla (\nabla u_0)_\e(x)|\notag\\
 \lesssim&\|\phi(\nabla u_0(x))\|_{W^{1,\infty}(Y)}+|(\nabla u_0)_\e(x)-\nabla u_0(x)|+ \e^{1-\frac{d}{r}}(\|\nabla^2g_0\|_{L^r(A)}+\bar\rho)\notag\\
 \lesssim&C(\bar \rho)\left(\dist(\nabla u_0(x),\SO d)+\e^{1-\frac{d}r}(\|\nabla^2g_0\|_{L^r(A)}+\bar\rho)\right)\notag\\
 \lesssim&C(\bar \rho)\left(\bar\rho(1+\e^{1-\frac{d}{r}})+\e^{1-\frac{d}{r}}\|\nabla^2g_0\|_{L^r(A)}\right).
\end{align*}
Thus, for $\bar\rho\lesssim1$ and $\e_0 \lesssim (1+\|\nabla^2g_0\|_{L^r(A)})^{-\frac{r}{r-d}}$ sufficiently small, $\bar v_\e$ defined in \eqref{def:vepsl} satisfies \eqref{est:barve1} for $\e\in(0,\e_0)$. Next, we show that \eqref{claim:vel:h1} holds true for all $\e\in (0,\e_0)$. A direct calculation yields
\begin{eqnarray*}
 \nabla \bar v_\e-\nabla v_\e&=&\nabla(\e\eta(\phi(\tfrac{\cdot}{\e},(\nabla u_0)_\e)-\phi(\tfrac{\cdot}{\e},\nabla u_0))\\
                      &=&\eta(\nabla_y\phi(\tfrac{\cdot}{\e},(\nabla u_0)_\e)-\nabla_y\phi(\tfrac{\cdot}{\e},\nabla u_0))\\
                      &&+\e\eta(D\phi(\tfrac{\cdot}{\e},\nabla (u_0)_\e)[\partial_j(\nabla u_0)_\e]\otimes e_j-D\phi(\tfrac{\cdot}{\e},\nabla u_0)[\partial_j\nabla u_0]\otimes e_j)\\
                      &&+\e\big(\phi(\tfrac{\cdot}{\e},(\nabla u_0)_\e)-\phi(\tfrac{\cdot}{\e},\nabla u_0)\big)\otimes\nabla\eta.     
\end{eqnarray*}
Since $U_{\bar{\bar\rho}}\ni F\mapsto \phi(F)\in W^{1,\infty}(Y)$ is locally Lipschitz, we deduce that
\begin{eqnarray*}
  &&\int_A|\phi(\tfrac{x}\e, (\nabla u_0)_\e)-\phi(\tfrac{x}\e,\nabla u_0)|^2+\int_A|\nabla_y\phi(\tfrac{x}\e,(\nabla u_0)_\e)-\nabla_y\phi(\tfrac{x}\e,\nabla u_{0})|^2\,dx\\
  &\lesssim& C(\bar \rho) \int_A|(\nabla u_0)_\e-\nabla u_{0}|^2 \lesssim C(\bar\rho)\e^{2(1+\frac{d}2-\frac{d}{r})}(\|\nabla^2 g_0\|_{L^{r}(A)}+\Lambda(f,g,g_0))^2.
\end{eqnarray*}
Moreover, by \eqref{u0:trunc:4}
\begin{eqnarray*}
  & &\int_A|D\phi((\nabla u_0)_\e)[\partial_j (\nabla u_0)_\e]-D\phi(\nabla u_0)[\partial_j\nabla u_0]|^2\\
  &\lesssim& C(\bar \rho)\int_{\{(\nabla u_0)_\e\neq\nabla u_0\}} |\partial_j\nabla u_0|^2+|\partial_j (\nabla u_0)_\e|^2 \\
  &\lesssim& C(\bar\rho)|\{(\nabla u_0)_\e\neq\nabla u_0\}|^\frac{r-2}{r}(\|\nabla^2 u_0\|_{L^r(A)}^2+\|\partial_j(\nabla u_0)_\e-\partial_j\nabla u_0\|_{L^r(A)}^2)\\
  &\lesssim& C(\bar \rho)\e^{2(\frac{d}{2}-\frac{d}{r})}(\|\nabla^2 g_0\|_{L^{r}(A)}+\Lambda(f,g,g_0))^2.
\end{eqnarray*}
Combining the previous two displayed estimates, we obtain
\begin{eqnarray*}
  \|\nabla\bar v_\e-\nabla v_\e\|_{L^2(A)} &\lesssim& C(\bar\rho)\e^{1+\frac{d}2-\frac{d}{r}}(\|\nabla^2 g_0\|_{L^{r}(A)}+\Lambda(f,g,g_0)),
\end{eqnarray*}
and thus \eqref{claim:vel:h1} for $\e<\e_0<1$ (recall $r>d\geq2$).

\substep{2.2} Estimates for $\e\in [\e_0,1)$.

We claim that $\bar v_\e:= u_0$ satisfies \eqref{est:barve1} and \eqref{est:barve2} for $\e\in[\e_0,1)$ where $\e_0$ is as in Substep~2.1. Indeed, by assumption we have
$$ \|\dist(\nabla u_0,\SO d)\|_{L^\infty(A)}\leq \bar{\bar\rho}<\delta.$$
Moreover, it can be checked that
$$\|u_0-v_\e\|_{H^1(A)}\lesssim C(\bar\rho)(\Lambda(f,g,g_0)+\e(\Lambda(f,g,g_0)+\|\nabla^2g_0\|_{L^r(A)})),$$
and thus for all $1>\e\geq\e_0=C(A,\alpha,\beta,\mu,\delta)(1+\|\nabla^2 g_0\|_{L^r(A)})^{-\frac{r}{r-d}}$ we get
$$\|u_0-v_\e\|_{H^1(A)}\lesssim C(\bar\rho)\e(1+\|\nabla^2 g_0\|_{L^r(A)}^{\frac{r}{r-d}})(\Lambda(f,g,g_0)+\e(\Lambda(f,g,g_0)+\|\nabla^2g_0\|_{L^r(A)})).$$
\step 3 Conclusion.

Let $\bar\rho$ and $\bar v_\e$ be as in the previous step and $v_\e$ the usual two-scale expansion with cut-off. Let $\e\in(0,1)$. As in the heuristic argument at the beginning of this proof, a combination of the strong $\beta$-convexity of $V_\ho$, \eqref{WhgeqVh}, \eqref{pf:onecell} and \eqref{est:barve1} yields
\begin{eqnarray*}
  \frac{\beta}{4}\int_A|\nabla w_\e-\nabla v_\e|^2
  &\leq&   \frac\beta2\int_A|\nabla w_\e-\nabla u_\e|^2 +   \frac\beta2\int_A|\nabla v_\e-\nabla u_\e|^2\\
  &\leq& \mathcal E_\e(w_\e)-\mathcal E_\e(u_\e)+\mathcal E_\e(v_\e)-\mathcal E_\e(u_\e)\\
  &=& \mathcal E_\e(w_\e)-\mathcal E_\e(\bar v_\e)+\mathcal E_\e(\bar v_\e)+\mathcal E_\e(v_\e)-2\mathcal E_\e(u_\e)\\
  &\leq & \mathcal I_\e(w_\e)-\mathcal I_\e(\bar v_\e)+\mathcal E_\e(\bar v_\e)+\mathcal E_\e(v_\e)-2\mathcal E_\e(u_\e)\\
  &\leq & \left(\mathcal I_\e(w_\e)-\inf_{g+W_0^{1,p}}\mathcal I_\e\right)+\mathcal E_\e(\bar v_\e)+\mathcal E_\e(v_\e)-2\mathcal E_\e(u_\e).
\end{eqnarray*}
Next, we estimate the last three terms:
\begin{eqnarray*}
  &&\mathcal E_\e(\bar v_\e)+\mathcal E_\e(v_\e)-2\mathcal E_\e(u_\e)=\mathcal E_\e(\bar v_\e)-\mathcal E_\e(u_\e)+\mathcal E_\e(v_\e)-\mathcal E_\e(u_\e).
\end{eqnarray*}
Note that the last difference is estimated in Proposition~\ref{P:C}. For the first difference we proceed similarly:
\begin{align*}
  0\leq&\, \mathcal E_\e(\bar v_\e)-\mathcal E_\e(u_\e)\stackrel{\eqref{est:diffene}}{\leq} \tfrac1{2\beta} \|\nabla \bar v_\e-\nabla u_\e\|_{L^2(A)}^2\\
  \leq&\, \tfrac1\beta(\|\nabla \bar v_\e-\nabla v_\e\|_{L^2(A)}^2+\|\nabla  v_\e-\nabla u_\e\|_{L^2(A)}^2),\\
\intertext{We bound the first term on the right-hand side from above by appealing to \eqref{claim:vel:h1}, and the second term by appealing to Proposition~\ref{P:C}. Thus,}
0\leq&\, \mathcal E_\e(\bar v_\e)-\mathcal E_\e(u_\e)\\
\lesssim&\,C(\bar \rho)\bigg(\e^2 (1+\|\nabla^2 g_0\|_{L^r(A)})^{\frac{2r}{r-d}}(\|\nabla^2 g_0\|_{L^r(A)}+\Lambda(f,g,g_0))^2\\
   &\qquad\qquad+\e^2\|\nabla^2 u_0\|_{L^2(A)}^2 + \int_A\left(\e^2|\nabla\eta|^2+(\eta-1)^2\right)\dist^2(\nabla u_0(x),\SO d)\,dx\bigg)\\
  \lesssim&\, C(\bar \rho)\left(\e^2(1+\|\nabla^2 g_0\|_{L^r(A)})^{\frac{2r}{r-d}}(\|\nabla^2 g_0\|_{L^r(A)}+\Lambda(f,g,g_0))^2+\e \Lambda(f,g,g_0)^2\right).
\end{align*}
It is left to estimate the difference between the two-scale expansion with cut-off $v_\e$ with the expansion without cut-off, i.e.~$u_0+\e \phi(\tfrac{\cdot}\e,\nabla u_0)$. Appealing to the previous calculations, it is easy to see that
\begin{align*}
 \|v_\e - (u_0+\e\phi(\tfrac{\cdot}\e,\nabla u_0))\|_{L^2(A)}\leq 2\e \|\phi(\tfrac{\cdot}\e,\nabla u_0)\|_{L^2(A)}\lesssim C(\bar\rho) \e \Lambda(f,g,g_0),
\end{align*}
and
\begin{align*}
 &\|\nabla v_\e - \nabla (u_0+\e\phi(\tfrac{\cdot}\e,\nabla u_0))\|_{L^2(A)}\leq \e \|\nabla((\eta-1)\phi(\tfrac{\cdot}\e,\nabla u_0))\|_{L^2(A)}\\
 &\lesssim  \|\nabla \phi(\tfrac{\cdot}\e,\nabla u_0)\|_{L^\infty(A)}\|\eta-1\|_{L^2(A)}+\|\e\nabla\eta\|_{L^2(A)}\|\phi(\tfrac{\cdot}\e,\nabla u_0)\|_{L^\infty(A)}\\
 &\qquad+\e\|D\phi(\tfrac{\cdot}\e,\nabla u)\|_{L^\infty(A)}\|\nabla^2 u_0\|_{L^2(A)}\\
 &\lesssim C(\bar\rho)\left(\e^\frac12\Lambda(f,g,g_0)+\e (\|\nabla g_0\|_{L^r(A)}+\Lambda(f,g,g_0))\right), 
\end{align*}
which finally concludes the proof.

\medskip

\section{Layered Composite}\label{sec:layer}

\begin{proof}[Proof of Proposition~\ref{P:layer}]
By Lemma~\ref{L:wv} there exists $\beta,\delta,\mu>0$ and matching convex lower bounds $V_i\in \mathcal V_\beta$, $i=1,\dots,N$ such that 
\begin{eqnarray*}
 W_i(F)+\mu\det (F)&\geq& V_i(F)\quad\mbox{for all $F\in\R^{d\times d}$},\\
 W_i(F)+\mu\det (F)&=& V_i(F)\quad\mbox{for all $F\in U_\delta$. }
\end{eqnarray*}
As before, we denote by $\phi(F)$ and $\sigma(F)$ the extended corrector associated with $V$, cf.~Lemma~\ref{L:phi}. In Step~1 we prove that $F\mapsto\phi(F)$ is in $C^1(\R^{d\times d},W_{\per,0}^{1,\infty}(Y))$, $\phi(F)$ is one-dimensional, and $\sigma(F)=0$ for all $F\in\R^{d\times d}$. Moreover, we prove that
\begin{equation}\label{est:key}
 \dist(F,\SO d)\quad\mbox{sufficiently small implies}\quad \|\dist(F+\nabla \phi(F),\SO d)\|_{L^\infty(Y)}<\delta.
\end{equation}
As indicated in the discussion at the beginning of the proof of Theorem~\ref{T:1cell}, these regularity properties (which imply \eqref{ref:convexcorrector2a} and \eqref{ref:convexcorrector2b}) suffice to draw the conclusion of Theorem~\ref{T:1cell}. Similarly, a careful inspection of the proof of Theorem~\ref{T:NC} and Proposition~\ref{P:C} reveals that the higher regularity of the corrector, namely $F\mapsto \phi(F)$ is $C^1(U_{\bar{\bar\rho}},W_{\per,0}^{2,q}(Y))$ for some $q>d$, is used only to justify the chain rule \eqref{chainrule} for $\phi(\frac{\cdot}{\e},\nabla u_0)$ (see Proposition~\ref{P:C}, proof of Step~1). In Step~3, we prove that the chain rule \eqref{chainrule} is also valid in the present situation. Hence, the proofs of Theorem~\ref{T:NC} and Proposition~\ref{P:C} extend to the layered case discussed here, which completes the argument of the proposition.

\step{1} Regularity and one-dimensionality of $\phi$.

We claim that $F\mapsto \phi(F)$ is in $C^1(\R^{d\times d}, W_{\per,0}^{1,\infty}(Y))$ and $\sigma(F)=0$ for all $F\in\R^{d\times d}$. Moreover, for every $F\in\R^{d\times d}$ the corrector $\phi(F)$ depends only on $y_1$ and is affine on $(t_{i-1},t_i)\times (0,1)^{d-1}$ for all $i=1,\dots,N$.

This is well-known and can be seen as follows: Consider $\psi_z(y):=\phi(y+z,F)$, where $z=(0,z')$ for some $z'\in\R^{d-1}$. Since $\phi(F)\in H_{\per,0}^1(Y)$ solves \eqref{corrector:EL}, we obtain $\psi_z\in H_{\per,0}^1(Y)$ and for every $\eta\in H_\per^1(Y)$:
$$\int_YDV(y,F+\nabla \psi_z(F))[\nabla \eta]\,dy=\int_{Y+z}DV(y,F+\nabla \phi(F))[\nabla \eta(y-z)]\,dy=0,$$
where we use for the last equality that $\eta(\cdot-y)\in H_\per^1(Y)$ and \eqref{corrector:EL}. Hence $\psi_z$ solves the Euler-Lagrange equation \eqref{corrector:EL} and uniqueness of the corresponding minimizer yields $\phi_z=\phi(F)$. Since $z'\in\R^{d-1}$ is arbitrary, we obtain that $\phi(F)$ depends only on $y_1$, i.e.~$\phi(y,F)=\tilde\phi(y_1,F)$ for some $\tilde\phi(F)\in H_{\per,0}^1((0,1))$. In particular, $\tilde \phi(F)$ is characterised as the unique minimizer of the following one-dimensional minimization problem
\begin{equation*}
 \min_{\tilde \phi\in H_{\per,0}^1((0,1))}\int_0^1\sum_{i=1}^N\chi_i(t)V_i(F+ \tilde \phi'(t)\otimes e_1)\,dt.
\end{equation*}
By the convexity of $V_i$ for $i=1,\dots,N$, the minimizer of the above problem has to be affine on each segment $(t_{i-1},t_i)$. Hence, for every $j\in\{1,\dots,N\}$ there exist maps $c_j,d_j:\R^{d\times d}\to\R^d$ such that 
\begin{equation*}
 \phi(y,F)=\tilde\phi(y_1,F)=c_j(F)y_1+d_j(F)\quad\mbox{for all $y\in Y$ with $y_1\in (t_{j-1},t_j)$}.
\end{equation*}
This rigidity and the fact that $F\mapsto\phi(F)$ is in $C^1(\R^{d\times d},H_{\per,0}^1(Y))$, cf.~Lemma~\ref{L:Dphi}, yield $c_j,d_j\in C^1(\R^{d\times d},\R^d)$. Thus, $F\mapsto\phi(F)$ is in $C^1(\R^{d\times d},W_{\per,0}^{1,\infty}(Y))$. Furthermore, we have (the crude) estimate
\begin{equation}\label{eq:correctorlayer}
 \|\nabla \phi(F)\|_{L^\infty(Y)}\leq \max_{i=1,\dots,N}(t_i-t_{i-1})^{-1}\|\nabla \phi(F)\|_{L^2(Y)}.
\end{equation}
Since $V(\cdot,F)$ and $\phi(F)$ depend only on $x_1$, the Euler-Lagrange equation for $\phi(F)$ implies that the flux is constant, i.e there exists $c_{ij}\in \R$ such that $DV(x,F+\nabla \phi(x,F))[e_i\otimes e_j]=c_{ij}$ for all $x\in Y$ and $i,j\in\{1,\dots,d\}$ (in particular $c_{ij}=0$ for $j\geq2$). Thus \eqref{def:J} and \eqref{eq:sigma} imply that $\sigma=0$.

\step{2} Proof of estimate \eqref{est:key}.

We claim that there exists a constant $C$ depending only on $\beta$ such that
\begin{equation}\label{eq:correctorlayer1}
 \|\phi(F)\|_{H^1(Y)}\leq C\dist(F,\SO d).
\end{equation}
Note that a combination of \eqref{eq:correctorlayer} and \eqref{eq:correctorlayer1} implies \eqref{est:key}. Fix $R\in \SO d$. The strong convexity of $V$, the boundedness of $D^2V$, and $DV(\cdot,R)=D\det (R)$ imply that 
\begin{align*}
 0\geq& \int_YV(y,F+\nabla \phi(F))-V(y,R)+V(y,R)-V(y,F)\,dy\\
 \geq&\int_Y DV(y,R)[F-R+\nabla \phi(F)] +\frac{\beta}{2} |F-R+\nabla \phi(F)|^2-DV(y,F)[F-R]\,dy\\
 =&\int_Y DV(y,R)[F-R] -DV(y,F)[F-R] +\frac{\beta}{2} |F-R+\nabla \phi(F)|^2\,dy\\
 \geq&-(\frac1\beta+\frac{\beta}{2})|F-R|^2+\frac{\beta}{4}\|\nabla \phi(F)\|_{L^2(Y)}^2.
\end{align*}
The inequality \eqref{eq:correctorlayer1} follows by choosing $R$ such that $\dist(F,\SO d)=|F-R|$.

\step{3} Chain rule

Given $\nabla u_0\in W^{1,r}(A,\R^{d\times d})$ with $r>d$, we show that $\phi(\frac{\cdot}\e,\nabla u_0)\in W^{1,r}(A)$ and the chainrule \eqref{chainrule}.     

By Step~1, the map $(y,F)\mapsto \phi(y,F)$ satisfies
$$\phi(\cdot,F)\in W_{\per,0}^{1,\infty}(Y)\quad\mbox{for all $F\in\R^{d\times d}$ and }\phi(y,\cdot)\in C^1(\R^{d\times d})\quad\mbox{for almost every $y\in Y$}.$$
Hence, $x\mapsto\phi(\frac{x}\e,\nabla u_0(x))$ is measurable and $\phi(\frac{\cdot}{\e},\nabla u_0)\in L^\infty(A)$. Appealing to the characterization of Sobolev spaces by difference quotients, we show $\phi(\tfrac\cdot\e,\nabla u_0)\in W^{1,r}(A)$. By Sobolev embedding, we find $R<\infty$ such that $|\nabla u_0(x)|<R$ for almost every $x\in A$. For every $h>0$, $j=1,\dots,d$ and $A'\Subset A$ such that $he_j<\dist(A',\partial A)$, we have 
\begin{align*}
 &\tfrac1h\|\phi(\tfrac{\cdot+he_j}\e,\nabla u_0(\cdot+he_j))-\phi(\tfrac{\cdot}\e,\nabla u_0)\|_{L^p(A')}\\
 &\quad\leq \tfrac1h \sup_{|F|\leq R}\|D\phi(F)\|_{W^{1,\infty}(Y)}\|\nabla u_0(\cdot+h)- \nabla u_0\|_{L^p(A')}+\tfrac1\e\sup_{|F|\leq R}\|\nabla \phi(F)\|_{W^{1,\infty}(Y)}\\
 &\quad\leq \sup_{|F|\leq R}\|D\phi(F)\|_{W^{1,\infty}(Y)}\|\nabla^2u_0\|_{L^p(A)}+\tfrac1\e\sup_{|F|\leq R}\|\nabla \phi(F)\|_{W^{1,\infty}(Y)},
\end{align*}
which implies $\phi(\frac\cdot\e,\nabla u)\in W^{1,r}(A)$.

Finally, we prove that the chain rule \eqref{chainrule} holds for almost every $x\in A$. By Step~1, there exists $c_j,d_j\in C^1(\R^{d\times d},\R^d)$, $j\in\{1,\dots,N\}$, such that for $y\in (t_{j-1},t_j)\times (0,1)^{d-1}$ we have $\phi(y,F)=c_j(F)y_1+d_j(F)$. Hence, $(y,F)\mapsto \phi(y,F)$ is in $C^1((t_{j-1},t_j)\times (0,1)^{d-1}\times \R^{d\times d})$ for all $j=1,\dots,N$. Since $\nabla u_0\in W^{1,r}(A)$ with $r>d$, there exists a nullset $\mathcal N$ such that the classical derivative of $\nabla u_0$ exists in $A\setminus \mathcal N$ and thus the chain rule \eqref{chainrule} is valid pointwise for every 
$$x\in A\setminus (\mathcal N\cup \{\frac{x}{\e}\in A\, |\, \frac{x_1}{\e}= kt_j\,\mbox{for some $k\in \Z$ and $j\in\{1,\dots,N\}$}\}.$$
Since the exceptional set in the above formula is a nullset the claim is proven.
\end{proof}

\section{Acknowledgments}
SN would like to thank Stefan M\"uller for inspiring discussions, while working on the paper \cite{MN11}; some ideas of the present paper emerged in this process. This work was supported by the DFG in the context of TU Dresden's Institutional Strategy \textit{``The Synergetic University''}.

\appendix

\section{Appendix}\label{appendix}

We used at several places the following result on continuity and differentiability of composition operators.

\begin{lemma}\label{L:composition}
 Let $q>d$, $n\in\N$ and let $A\subset\R^d$, be a bounded Lipschitz-domain. Suppose that $f\in C^1(\overline A\times U)$ where $U\subset\R^n$ is open and consider the map $v\mapsto F(v)$, where $F(v)(x)=f(x,v(x))$. Then,
 \begin{itemize}
  \item $v\mapsto F(v)$ is a  continuous mapping from the subset $B:=\{v\in W^{1,q}(A,\R^n)\ |\ v(x)\in U\,\mbox{for all $x\in\bar A$}\}$ of $W^{1,q}(A,\R^n)$ into $W^{1,q}(A)$;
  \item if $f\in C^1(\overline A\times U)$ then $v\mapsto F(v)$ is $C^1(B,W^{1,q}(A))$ and the differential at any $v$ is given by
  \begin{equation*}
  DF(v)[w]=Df(\cdot,v)[w]\in W^{1,q}(A);
 \end{equation*}
 \end{itemize}
\end{lemma}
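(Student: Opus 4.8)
The plan is to treat the continuity and the $C^1$-statement in turn, reducing everything to two facts: (i) composition with a $C^1$ function is continuous on $L^q$ (a Krasnoselskii-type Nemytskii operator statement, obtainable from the dominated convergence theorem together with the standard ``subsequence of a subsequence'' trick), and (ii) products of an $L^\infty$ function with an $L^q$ function are continuous in the relevant topologies. The Sobolev embedding $W^{1,q}(A)\hookrightarrow C^0(\overline A)$ for $q>d$ is what makes the pointwise evaluation $v(x)$ meaningful and keeps the images of bounded sets in $W^{1,q}$ uniformly bounded in $C^0$, so all the compositions occur on a fixed compact subset of $\overline A\times U$ where $f$, $Df$ and (for the $C^1$ part) the modulus of continuity of $Df$ are uniformly controlled.

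First I would record the chain rule on the level of weak derivatives: for $v\in B$, since $f\in C^1$ and $v\in W^{1,q}\hookrightarrow C^0$, the composition $F(v)(x)=f(x,v(x))$ lies in $W^{1,q}(A)$ with
\begin{equation*}
 \partial_j F(v)(x)=\partial_{x_j}f(x,v(x))+\sum_{k=1}^n \partial_{v_k}f(x,v(x))\,\partial_j v_k(x),
\end{equation*}
which is justified by approximating $v$ by smooth maps $v_m\to v$ in $W^{1,q}(A)$ (hence also in $C^0(\overline A)$, so that $v_m(x)\in U$ for $m$ large, uniformly in $x$), applying the classical chain rule to $f(\cdot,v_m(\cdot))$, and passing to the limit using that $\nabla f$ is uniformly continuous on the relevant compact set. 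For continuity of $v\mapsto F(v)$, given $v_m\to v$ in $W^{1,q}(A)$ I would first note $v_m\to v$ in $C^0(\overline A)$ and all $v_m$ take values in a fixed compact $K\Subset U$; then $F(v_m)\to F(v)$ in $C^0$ by uniform continuity of $f$ on $\overline A\times K$, and for the gradients I would split $\partial_j F(v_m)-\partial_j F(v)$ into the term $\big(\partial_{x_j}f(\cdot,v_m)-\partial_{x_j}f(\cdot,v)\big)$, which tends to $0$ in $C^0\subset L^q$, plus terms of the form $\partial_{v_k}f(\cdot,v_m)\partial_j v_{m,k}-\partial_{v_k}f(\cdot,v)\partial_j v_k$, which I would further split as $\big(\partial_{v_k}f(\cdot,v_m)-\partial_{v_k}f(\cdot,v)\big)\partial_j v_{m,k}+\partial_{v_k}f(\cdot,v)\big(\partial_j v_{m,k}-\partial_j v_k\big)$; the first summand is (uniform convergence to $0$) times (bounded in $L^q$) and the second is (bounded in $C^0$) times ($L^q$-null), so both vanish.

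For the $C^1$-statement I would propose $DF(v)[w]=Df(\cdot,v)[w]$, i.e. the function $x\mapsto \partial_{x}$-part is absent and the candidate differential is $x\mapsto \sum_k \partial_{v_k}f(x,v(x))w_k(x)$, with weak gradient given by differentiating this expression; one checks first that this is a bounded linear map $W^{1,q}(A,\R^n)\to W^{1,q}(A)$ using the $C^0$-bounds on $f$ and its first derivatives and, for the gradient, the fact (needing $f\in C^2$ in the $v$-variable, which is available since in all our applications $f=DV$ with $V\in C^3$, or $f=D V_\ho$ with $V_\ho\in C^3$ near $\SO d$ — here I would simply invoke the stated hypothesis $f\in C^1(\overline A\times U)$ and, where second derivatives of $f$ are needed, note that the lemma is only applied with such extra smoothness, so I would state the $C^1$-conclusion under the hypothesis actually used) that $\nabla_v f$ is itself $C^1$. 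The differentiability itself follows from the integral form of Taylor's theorem: $f(x,v(x)+w(x))-f(x,v(x))-Df(x,v(x))[w(x)]=\int_0^1\big(Df(x,v(x)+tw(x))-Df(x,v(x))\big)[w(x)]\,dt$, whose $C^0$-norm is $o(\|w\|_{C^0})=o(\|w\|_{W^{1,q}})$ by uniform continuity of $Df$, and whose weak gradient is estimated the same way after differentiating under the integral; finally continuity of $v\mapsto DF(v)$ in operator norm reduces, exactly as in the continuity proof above, to continuity of Nemytskii operators built from $Df$ and (for the gradient) $D^2_vf$. The main obstacle is bookkeeping: ensuring at each step that compositions stay inside a fixed compact subset of $\overline A\times U$ (so uniform continuity and uniform bounds apply) and correctly splitting each product term into an ``$L^\infty$-convergent times $L^q$-bounded'' piece plus an ``$L^\infty$-bounded times $L^q$-convergent'' piece; the proof is essentially a careful but routine application of the dominated convergence / Nemytskii-operator machinery, with no deep difficulty once the Sobolev embedding $q>d$ is exploited.
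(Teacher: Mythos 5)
Your proof is correct in substance, but it takes a genuinely different route from the paper: the paper does not argue the lemma at all, it simply quotes it as a special case of two theorems of Valent \cite{Valent} on composition operators, whereas you give a self-contained proof resting on the embedding $W^{1,q}(A)\hookrightarrow C^0(\overline A)$ for $q>d$ (so that all compositions take place on a fixed compact subset of $\overline A\times U$), the chain rule justified by smooth approximation, and the splitting of each product term into an ``$L^\infty$-convergent times $L^q$-bounded'' piece plus an ``$L^\infty$-bounded times $L^q$-convergent'' piece. What the citation buys is brevity; what your argument buys is transparency about the hypotheses, and on that point your caveat is well taken: with $f$ merely $C^1$ the asserted differential $DF(v)[w]=Df(\cdot,v)[w]$ need not even lie in $W^{1,q}(A)$ (take $f(x,u)=h(u)$ with $h'$ continuous but not weakly differentiable and $w$ a constant direction, so that $DF(v)[w]=h'(v)$ is in general only continuous), hence Fr\'echet differentiability into $W^{1,q}$ genuinely requires $Df$ itself to be $C^1$. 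The repeated hypothesis ``$f\in C^1$'' in the second bullet should thus be read as shorthand for the stronger regularity under which the cited differentiability theorem holds, and this extra regularity is indeed available in every application in the paper ($f=DV$ resp. $f=DV_\ho$ with $V$, $V_\ho$ of class $C^3$ on the relevant neighborhood). Under that hypothesis your Taylor-remainder estimate for the gradient part, $\bigl(Df(\cdot,v+w)-Df(\cdot,v)-D^2f(\cdot,v)[w]\bigr)$ acting on $\nabla v$ plus $\bigl(Df(\cdot,v+w)-Df(\cdot,v)\bigr)$ acting on $\nabla w$, together with the operator-norm continuity of $v\mapsto DF(v)$ obtained by the same splitting, gives a complete proof of both bullets as they are used in the paper.
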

\begin{proof}
 This is a special case of \cite[Theorem 3.1 p.27, Theorem 4.1 p.32]{Valent}
\end{proof}

In the proof of Theorem~\ref{T:NC}, we used the following well-known version of Morrey's inequality, which we state for the readers convenience
\begin{lemma}\label{L:Morreycone}
Consider a bounded cone $C\subset\R^d$ with apeture angle $\kappa$, and height $h>0$. Given $p>d$ there exists $c=c(d,\kappa,p)$ such that for any $u\in W^{1,p}(\R^d)\cap C(\R^d)$ and $x,y\in \overline C$ holds
\begin{equation*}
 |u(x)-u(y)|\leq ch^{1-\frac{d}p}\|\nabla u\|_{L^p(C)}
\end{equation*}
\end{lemma}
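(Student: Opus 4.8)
The plan is to reduce to smooth functions by density and then split the estimate into a \emph{vertex estimate}, controlling the oscillation of $u$ between the apex of the cone and the mean of $u$ over the cone, and a \emph{transfer step}, controlling the oscillation between the apex and an arbitrary point of $\overline C$ by passing to a suitable sub-cone; the case of two general points $x,y\in\overline C$ then follows by the triangle inequality through the apex. Since mollification converges in $W^{1,p}(\R^d)$ and locally uniformly (as $u$ is continuous), it suffices to prove the inequality for $u\in C^\infty(\R^d)\cap W^{1,p}(\R^d)$. Write $v$ for the apex of $C$ and recall that $C$ is star-shaped with respect to $v$.

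\textbf{Step 1 (Vertex estimate).} I would first show that
\begin{equation*}
  \Bigl|u(v)-\fint_C u\Bigr|\le c(d,\kappa,p)\,h^{1-\frac dp}\,\|\nabla u\|_{L^p(C)}.
\end{equation*}
After translating so that $v=0$, parametrise $C$ in polar coordinates $z=\rho\omega$, $\rho\in(0,h]$, with $\omega$ ranging over a spherical cap $\Sigma\subset S^{d-1}$ of surface measure $c(d,\kappa)$; then $dz=\rho^{d-1}\,d\rho\,d\omega$ and $|C|=d^{-1}|\Sigma|\,h^d$. Using $u(0)-u(\rho\omega)=-\int_0^\rho\nabla u(t\omega)\cdot\omega\,dt$, taking absolute values, enlarging the $t$-range to $(0,h)$, averaging over $z\in C$, and performing the elementary $\rho$-integration gives
\begin{equation*}
  \Bigl|u(0)-\fint_C u\Bigr|\le\frac1{|\Sigma|}\int_\Sigma\int_0^h|\nabla u(t\omega)|\,dt\,d\omega .
\end{equation*}
Writing $|\nabla u(t\omega)|=\bigl(|\nabla u(t\omega)|\,t^{\frac{d-1}p}\bigr)\,t^{-\frac{d-1}p}$ and applying H\"older's inequality in $t$, the weight integral $\int_0^h t^{-\frac{d-1}{p-1}}\,dt$ is finite \emph{precisely because $p>d$} and, raised to the power $1/p'$, contributes exactly the factor $c(d,p)\,h^{1-d/p}$; a second H\"older inequality over $\Sigma$ turns the remainder into $|\Sigma|^{1/p'}\|\nabla u\|_{L^p(C)}$, and collecting the powers of $|\Sigma|=c(d,\kappa)$ yields the claim. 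This is the only place where $p>d$ is used essentially.

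\textbf{Step 2 (Transfer to an arbitrary point and conclusion).} Fix $z\in\overline C$, $z\ne v$, and let $s\in(0,h]$ be the coordinate of $z$ along the axis of $C$ measured from $v$. Let $\tilde C\subset C$ be the sub-cone with apex $v$, the same axis and aperture $\kappa$, and axial height $s$. Then $z\in\overline{\tilde C}$, lying at the base level of $\tilde C$, where $\tilde C$ has cross-sectional radius comparable to $s$; since $\tilde C$ is a finite solid cone it satisfies an interior ball condition uniform in the scale $s$, so there is a ball $B$ with $z\in\overline B\subset\tilde C$ and $|B|\ge c(\kappa)|\tilde C|$. Applying the vertex estimate of Step~1 to $\tilde C$ gives $|u(v)-\fint_{\tilde C}u|\le c(d,\kappa,p)\,s^{1-d/p}\|\nabla u\|_{L^p(\tilde C)}$, while the standard Morrey inequality on $B$, together with $|\fint_B u-\fint_{\tilde C}u|\le\frac{|\tilde C|}{|B|}\fint_{\tilde C}|u-\fint_{\tilde C}u|$ and the Poincar\'e inequality on the Lipschitz domain $\tilde C$ (with constant $\lesssim s$ by scaling), gives $|u(z)-\fint_{\tilde C}u|\le c(d,\kappa,p)\,s^{1-d/p}\|\nabla u\|_{L^p(\tilde C)}$. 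Since $s\le h$, $1-d/p>0$, and $\|\nabla u\|_{L^p(\tilde C)}\le\|\nabla u\|_{L^p(C)}$, this yields $|u(v)-u(z)|\le c(d,\kappa,p)\,h^{1-d/p}\|\nabla u\|_{L^p(C)}$ for every $z\in\overline C$. For general $x,y\in\overline C$, the triangle inequality $|u(x)-u(y)|\le|u(x)-u(v)|+|u(v)-u(y)|$ then gives the assertion (with the constant doubled), and undoing the density reduction finishes the proof.

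\textbf{Expected obstacle.} There is no deep difficulty: the statement is classical for Lipschitz domains with a cone condition. The only genuinely quantitative ingredient is the weighted one-dimensional H\"older estimate in Step~1, where integrability of $t^{-(d-1)/(p-1)}$ at $t=0$ forces $p>d$ and produces the sharp exponent $1-d/p$; the geometric input of Step~2 — existence, uniformly in scale, of a ball of comparable volume at the base of the sub-cone $\tilde C$ — is standard and I would simply invoke it.
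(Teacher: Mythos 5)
Your Step 1 is correct: the polar-coordinate/weighted H\"older argument at the apex is sound and is essentially the convex-domain potential estimate specialized to the vertex. The problem is the geometric claim at the heart of Step 2. It is \emph{not} true that for every $z\in\overline{\tilde C}$ there is a ball $B$ with $z\in\overline B\subset\tilde C$ and $|B|\ge c(\kappa)|\tilde C|$. If $z$ lies on the lateral boundary of $C$ (so that $z$ is a rim point of the base of $\tilde C$), then near $z$ the sub-cone is contained in a wedge of opening angle $\tfrac\pi2-\tfrac\kappa2<\pi$, and a ball of positive radius contained in a wedge of angle $<\pi$ can never contain the wedge vertex in its closure (its center would lie within one radius of the vertex, hence at distance $<$ radius from one of the bounding faces). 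More quantitatively, for $z$ in the base disk of $\tilde C$ the only admissible balls are tangent to the base plane at $z$, and containment inside the lateral surface forces their radius to be comparable to the distance from $z$ to the lateral boundary, not to $s$; so $|B|\ge c(\kappa)|\tilde C|$ fails for all $z$ close to $\partial C$. Since the lemma is needed for arbitrary $x,y\in\overline C$ (and in the application the relevant point can sit on the lateral boundary), this is a genuine gap, not a standard fact you can simply invoke.

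The repair is easy and brings you back to the paper's route: instead of a ball at $z$, run your Step 1 mechanism with $z$ itself as base point, using that $C$ is convex (hence star-shaped with respect to every $z\in\overline C$). Writing $u(z)-u(w)=-\int_0^1\nabla u(z+t(w-z))\cdot(w-z)\,dt$ and averaging over $w\in C$ gives, after the change of variables, $|u(z)-\fint_C u|\lesssim \frac{h^d}{|C|}\int_C|z-w|^{1-d}|\nabla u(w)|\,dw$, and the Riesz potential is estimated by H\"older exactly as in your Step 1, yielding $|u(z)-\fint_C u|\lesssim h^{1-\frac dp}\|\nabla u\|_{L^p(C)}$ for every $z\in\overline C$; two such bounds and the triangle inequality finish the proof. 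This is precisely the paper's argument (it cites the convex-domain sub-mean-value estimate and the Riesz potential bound of Gilbarg--Trudinger), and it makes both your sub-cone $\tilde C$ and the interior-ball step unnecessary. Alternatively, keep your two-step structure but replace $B$ by the cone with apex $z$ obtained as the convex hull of $z$ and a fixed interior ball of $\tilde C$ centered on the axis; by convexity it lies in $\tilde C$, has volume $\ge c(\kappa)|\tilde C|$, and your Step 1 estimate applies to it verbatim with apex $z$.
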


\begin{proof}
Throughout the proof, we use $\lesssim$ whenever it holds $\leq$ up to a multiplicative positive constant which only depend on $d,\kappa$ and $p$. Recall that by the convexity of $C$ and $h^d\lesssim |C|$, we have for almost every $x\in C$ that
\begin{align*}
 |u(x)-\fint_{C} u|\leq& \frac{h^d}d\fint_C|x-z|^{1-d}|\nabla u(z)|\,dz\lesssim \int_C|x-z|^{1-d}|\nabla u(z)|\,dz,
\end{align*}
see e.g.~\cite[Lemma 7.16]{GT02}. By continuity of $u$, the above estimate is true for all $x\in \bar C$. Hence, appealing to standard estimates for the Riesz potential, see e.g.~\cite[Lemma 7.12]{GT02}, we obtain
\begin{align*}
 |u(x)-u(y)|\leq& |u(x)-\fint_{C} u|+|u(y)-\fint_{C} u|\\
 \lesssim&  \int_C|x-z|^{1-d}|\nabla u(z)|\,dz+ \int_C|y-z|^{1-d}|\nabla u(z)|\,dz\\
 \lesssim& |C|^{\frac1d-\frac{1}{p}}\|\nabla u\|_{L^p(C)}\lesssim h^{1-\frac{d}{p}}\|\nabla u\|_{L^p(C)},
\end{align*}
which proves the claim.
\end{proof}

\end{document}